\numberwithin{equation}{section}
\newcommand{\ds}{\displaystyle}
\newcommand{\bdelta}{{\boldsymbol\delta}}
\newcommand{\bgamma}{{\boldsymbol\gamma}}
\newcommand{\bbeta}{{\boldsymbol\eta}}
\newcommand{\bomega}{{\boldsymbol\omega}}
\newcommand{\bphi}{{\boldsymbol\phi}}
\newcommand{\bpsi}{{\boldsymbol\psi}}
\newcommand{\bv}{{\mathbf{v}}}
\newcommand{\bw}{{\mathbf{w}}}
\newcommand{\f}{\mathbf{f}}
\newcommand{\ba}{\mathbf{a}}
\newcommand{\bc}{\mathbf{c}}
\newcommand{\bi}{\mathbf{i}}
\newcommand{\bu}{\mathbf{u}}
\newcommand{\bz}{{\mathbf{z}}}
\newcommand{\bt}{{\mathbf{t}}}
\newcommand{\bn}{{\mathbf{n}}}
\newcommand{\be}{{\mathbf{e}}}
\newcommand{\0}{{\mathbf{0}}}
\def\bF{\mathbf{F}}
\def\bK{\mathbf{K}}
\def\bI{\mathbf{I}}
\def\bV{\mathbf{V}}
\def\bW{\mathbf{W}}
\newcommand{\bL}{\mathbf{L}}
\newcommand\bH{\mathbf{H}}
\def\ubu{\underline{\bu}}
\def\ubv{\underline{\bv}}
\def\ubw{\underline{\bw}}
\newcommand\bbH{\mathbb{H}}
\newcommand\bbL{\mathbb{L}}
\newcommand{\cA}{\mathcal{A}}
\newcommand{\cB}{\mathcal{B}}
\newcommand{\cE}{\mathcal{E}}
\newcommand{\cJ}{\mathcal{J}}
\newcommand{\cT}{\mathcal{T}}
\newcommand{\cM}{\mathcal{M}}
\newcommand{\cN}{\mathcal{N}}
\newcommand{\cD}{\mathcal{D}}
\newcommand{\cO}{\mathcal{O}}
\def\rp{\mathrm{p}}
\def\rq{\mathrm{q}}
\def\rs{\mathrm{s}}
\def\R{\mathrm{R}}
\def\H{\mathrm{H}}
\def\L{\mathrm{L}}
\def\V{\mathrm{V}}
\def\W{\mathrm{W}}
\def\rc{\mathrm{c}}
\def\rP{\mathrm{P}}
\def\rt{\mathrm{t}}
\def\tD{\mathtt{D}}
\def\DOF{\mathtt{DoF}}
\def\ttd{\mathtt{d}}
\def\tKV{\mathtt{KV}}
\def\tF{\mathtt{F}}
\def\bcurl{\mathbf{curl}}
\def\div{\mathrm{div}}
\def\pil{\left<}
\def\pir{\right>}
\def\iter{\mathtt{iter}}
\def\tol{\textsf{tol}}
\def\DoF{\mathtt{DoF}}
\def\esssup{\mathrm{ess\,sup}}
\def\qin{{\quad\hbox{in}\quad}}
\def\qon{{\quad\hbox{on}\quad}}
\def\qan{{\quad\hbox{and}\quad}}
\def\ov{\overline}
\def\wt{\widetilde}
\def\wh{\widehat}
\newtheorem{thm}{Theorem}[section]
\newtheorem{rem}{Remark}[section]
\newtheorem{lem}[thm]{Lemma}
\newenvironment{proof}{\noindent{\it Proof.}}{\hfill$\square$}
\numberwithin{equation}{section}
\numberwithin{figure}{section}
\numberwithin{table}{section}
\title{Velocity-vorticity-pressure mixed formulation for the Kelvin--Voigt--Brinkman--Forchheimer equations}
\author{{\sc Sergio Caucao}\thanks{Grupo de Investigaci\'on en An\'alisis Num\'erico y C\'alculo Cient\'ifico (GIANuC$^2$) and Departamento de Matem\'atica y F\'isica Aplicadas, 
Universidad Cat\'olica de la Sant\'isima Concepci\'on, Casilla 297, Concepci\'on, Chile, 
email: {\tt scaucao@ucsc.cl}. Supported in part by ANID-Chile through the projects {\sc Centro de Mode\-lamiento Matem\'atico} (FB210005) and Fondecyt 11220393; and by DI-UCSC through the project FGII 04/2023.}
\quad
{\sc Ivan Yotov}\thanks{Department of Mathematics, University of Pittsburgh, Pittsburgh, PA 15260, USA, email: {\tt yotov@math.pitt.edu}. Supported in part by NSF grants DMS 2111129 and DMS 2410686.}}
\date{ }
\begin{document}

\maketitle

\begin{abstract}
\noindent 
In this paper, we propose and analyze a mixed formulation for the Kelvin--Voigt--Brinkman--Forchheimer 
equations for unsteady viscoelastic flows in porous media.
Besides the velocity and pressure, our approach introduces the vorticity as a further unknown.
Consequently, we obtain a three-field mixed variational formulation, where the aforementioned variables are the main unknowns of the system.
We establish the existence and uniqueness of a solution for the weak formulation, and derive
the corresponding stability bounds, employing a fixed-point strategy, along with monotone operators theory and Schauder theorem.
Afterwards, we introduce a semidiscrete continuous-in-time approximation based on stable Stokes elements for the velocity and pressure, and continuous piecewise
polynomial spaces for the vorticity. Additionally, employing backward Euler time discretization, we introduce a fully discrete finite element scheme.
We prove well-posedness, derive stability bounds, and 
establish the corresponding error estimates for both schemes. We provide several numerical
results verifying the theoretical rates of convergence and illustrating the performance and flexibility of the method for a range of domain configurations and model parameters.
\end{abstract}

\noindent
{\bf Key words}: Kelvin--Voigt--Brinkman--Forchheimer equations, mixed finite element methods, velocity-vorticity-pressure formulation

\smallskip\noindent
{\bf Mathematics subject classifications (2000)}: 65N30, 65N12, 65N15, 35Q79, 80A20, 76R05, 76D07

\maketitle


\section{Introduction}

Fluid flows through porous media at high velocity occur in many industrial applications, such as environmental, chemical, and petroleum engineering. For instance, in groundwater remediation and oil and gas extraction, the flow may be fast near injection or production wells or if the aquifer/reservoir is highly porous. Accurate modeling and simulation of such flows are imperative in these fields to optimize processes, ensure safety, and minimize environmental impact. 
Mathematical models have been developed to address different aspects of these flows. 
The Forchheimer model \cite{forchheimer1901wasserbewegung} addresses nonlinearities inherent in high velocity porous flow regimes.
The Brinkman model \cite{brinkman1949calculation} incorporates both viscous and permeability effects, enabling precise simulations of fluid movement in diverse environments, including highly porous media. On the other hand, many applications of interest involve flows of viscoelastic fluids through porous media, such as polymer injection and foam flooding in enhanced oil and gas recovery, blood perfusion through biological tissues, and industrial filters. 
The Kelvin--Voigt model \cite{KT2009} provides a fundamental framework for describing the viscoelastic behavior of fluids, capturing both viscosity and elasticity. 
The Kelvin--Voigt--Brinkman--Forchheimer (KVBF) model \cite{tt2013}, which generalizes and combines the advantages of the three models, is suitable for fast viscoelastic flows in highly porous media.

Concerning the literature, there are papers devoted to the mathematical analysis of the KVBF equations (see, e.g., \cite{tt2013}, \cite{SQ2018}, \cite{m2020}, and references therein). 
In \cite{tt2013}, the existence of a weak solution to the KVBF problem in velocity-pressure formulation is proved by using the Faedo--Galerkin method. In addition, existence, uniqueness and stability of a stationary solution is studied when the external force is time-independent and small.
Later on, the KVBF model with continuous delay is analyzed in \cite{SQ2018}.
In particular, the authors demonstrate that, following the establishment of pullback-$\mathcal{D}$ absorbing sets for the continuous solution process, the asymptotic compactness obtained through the decomposition method leads to the existence of pullback-$\mathcal{D}$ attractors.
Meanwhile, the existence and uniqueness of a strong solution to the KVBF equations is obtained in \cite{m2020} by exploiting the m-accretive quantization of both the linear and nonlinear operators. Furthermore, the existence of an exponential attractor is established, along with a discussion concerning the inviscid limit of the 3D KVBF equations towards the 3D Navier-Stokes-Voigt system, and subsequently towards the simplified Bardina model.
However, up to the authors' knowledge, there is no literature focused on the numerical analysis of the KVBF problem. 
On the other hand, several papers have been dedicated to the design and analysis of numerical schemes for simulating the Brinkman--Forchheimer equations.
In \cite{lsst2015}, the authors introduce and analyze a perturbed compressible system that serves as an approximation to the Brinkman--Forchheimer equations. They also develop a numerical method for this perturbed system, which relies on a semi-implicit Euler scheme for time discretization and employs the lowest-order Raviart--Thomas elements for spatial discretization. A
pressure stabilization finite element method is developed in \cite{lst2017}. In \cite{KouSunWu},
a time-discrete scheme for a variable porosity Brinkman--Forchheimer model is 
applied for simulating wormhole propagation. In \cite{cy2021}, a mixed formulation based on the pseudostress tensor and the velocity field is presented. By employing classical results on nonlinear monotone operators and a suitable regularization technique in Banach spaces, existence and uniqueness are proved. A fully discrete scheme is developed, which combines a finite element space discretization based on the Raviart--Thomas spaces for the pseudostress tensor and discontinuous piecewise polynomial elements for the velocity with a backward Euler time discretization. Sub-optimal error estimates are derived. These estimates are improved in \cite{covy2022}, where a three-field formulation including the velocity gradient is developed and analyzed. A staggered DG method for a velocity--velocity gradient--pressure formulation of the unsteady Brinkman--Forchheimer problem is developed in \cite{ZhaoLamChung}. Well-posedness and error analysis are presented for the semi-discrete and fully discrete schemes. The method is robust with respect to the Brinkman parameter. 
More recently, a vorticity-based mixed variational formulation is analyzed in \cite{accgry2023}, where the velocity, vorticity, and pressure are the main unknowns of the system. Existence and uniqueness of a weak solution, as well as stability bounds are derived by employing classical results on nonlinear monotone operators. A semidiscrete continuous-in-time mixed finite element approximation and a fully discrete scheme are introduced and optimal rates of convergence are established. 

The purpose of the present work is to develop and analyze a new vorticity-based mixed
formulation of the KVBF problem and to study a suitable 
conforming numerical discretization. To that end, unlike previous KVBF works and motivated by \cite{agmr2015}, \cite{acgmr2021}, and \cite{accgry2023}, we introduce the vorticity as an additional unknown besides the fluid velocity and pressure. 
In addition to the advantage of providing a direct, accurate, and smooth approximation of the vorticity, our approach gives optimal theoretical convergence rates without requiring any small data or quasi-uniformity assumptions on the mesh. Furthermore, unlike \cite{agmr2015}, \cite{acgmr2021}, or \cite{accgry2023}, our method does not require any augmentation process.
It is also important to mention that another novelty and advantage of the present work is that it generalizes the model studied in \cite{accgry2023} by including the nonlinear convective term and an additional time-derivative term, thus considering viscoelastic flows.

We establish the existence of a solution to the continuous weak formulation by employing techniques from \cite{Showalter}, \cite{cgo2021}, and \cite{cgg2023}, combined with a fixed-point argument, the Browder--Minty theorem, and the Schauder theorem. The uniqueness is achieved by contradiction arguments in conjunction with Gr\"onwall's inequality.
Stability for the weak solution is established by means of an energy estimate. We further develop semidiscrete continuous-in-time and fully discrete finite element approximations. 
We emphasize that our formulation relies on the natural $\bH^1$--$\L^2$ spaces for the velocity-pressure pair, facilitating the use of classical stable Stokes elements such as the Taylor--Hood, Crouzeix--Raviart, or MINI elements. Additionally, both continuous and discontinuous piecewise polynomial spaces can be utilized for discretizing the vorticity.
We focus on the continuous polynomial spaces.
We make use of the backward Euler method for the discretization in time. Adapting the tools employed for the analysis of the continuous problem, we prove well-posedness of the discrete schemes and derive the corresponding stability estimates.
We further perform error analysis for the semidiscrete and fully discrete schemes, establishing optimal rates of convergence in space and time.

We have organized the contents of this paper as follows. In Section~\ref{sec:vorticity-formulation} 
we describe the model problem of interest and develop the velocity-vorticity-pressure variational formulation. In Section \ref{sec:well-posedness-model} we show that it is well posed using a fixed-point strategy, along with monotone operators theory and the classical Schauder theorem. 
Next, in Section \ref{sec:semidiscrete-approximation} we present the semidiscrete continuous-in-time approximation, provide particular families of stable finite elements, and obtain error estimates for the proposed methods. Section \ref{sec:fully-discrete-approximation} is devoted to the fully discrete approximation. The performance of the
method is studied in Section \ref{sec:numerical-results} with several numerical 
examples in 2D and 3D, verifying the aforementioned rates of convergence, 
as well as illustrating its flexibility to handle spatially varying parameters in complex geometries. The paper ends with conclusions in Section~\ref{sec:conclusions}.

In the remainder of this section we introduce some standard notation and needed functional spaces.
Let $\Omega\subset \R^d$, $d\in \{2,3\}$, denote a domain with Lipschitz boundary $\Gamma$. 
For $\rs\geq 0$ and $\rp\in[1,+\infty]$, we denote by $\L^\rp(\Omega)$ and $\W^{\rs,\rp}(\Omega)$ the usual Lebesgue and Sobolev spaces endowed with the norms $\|\cdot\|_{\L^\rp(\Omega)}$ and $\|\cdot\|_{\W^{\rs,\rp}(\Omega)}$, respectively.
Note that $\W^{0,\rp}(\Omega)=\L^\rp(\Omega)$. 
If $\rp = 2$, we write $\H^{\rs}(\Omega)$ in place of $\W^{\rs,2}(\Omega)$, and denote the corresponding norm by $\|\cdot\|_{\H^{\rs}(\Omega)}$. 
By $\bH$ and $\bbH$ we will denote the corresponding vectorial and tensorial counterparts of a generic scalar functional space $\H$. 
The $\L^2(\Omega)$ inner product for scalar, vector, or tensor valued functions
is denoted by $(\cdot,\cdot)_{\Omega}$. The $\L^2(\Gamma)$ inner product or duality pairing
is denoted by $\pil\cdot,\cdot\pir_\Gamma$.
Moreover, given a separable Banach space $\V$ endowed with the norm $\|\cdot\|_{\V}$, we let $\L^{\rp}(0,T;\V)$ be the space of classes of functions $f : (0,T)\to \V$ that are Bochner measurable and such that $\|f\|_{\L^{\rp}(0,T;\V)} < \infty$, with
\begin{equation*}
\|f\|^{\rp}_{\L^{\rp}(0,T;\V)} \,:=\, \int^T_0 \|f(t)\|^{\rp}_{\V} \,dt,\quad
\|f\|_{\L^\infty(0,T;\V)} \,:=\, \mathop{\esssup}\limits_{t\in [0,T]} \|f(t)\|_{\V}.
\end{equation*}
In turn, for any vector field $\bv:=(v_i)_{i=1,d}$, we set the gradient and divergence operators, as
\begin{equation*}
\nabla\bv := \left(\frac{\partial\,v_i}{\partial\,x_j}\right)_{i,j=1,d}\qan
\div(\bv) := \sum^{d}_{j=1} \frac{\partial\,v_j}{\partial\,x_j}.
\end{equation*}
In what follows, when no confusion arises, 
$|\cdot|$ denotes the Euclidean norm in $\R^n$ or $\R^{n\times n}$.
In addition, in the sequel we will make use of the well-known H\"older inequality given by
\begin{equation*}
\int_{\Omega} |f\,g| \leq \|f\|_{\L^{\rp}(\Omega)}\,\|g\|_{\L^{\rq}(\Omega)}
\quad \forall\, f\in \L^\rp(\Omega),\,\forall\, g\in \L^\rq(\Omega), 
\quad\mbox{with}\quad \frac{1}{\rp} + \frac{1}{\rq} = 1 \,,
\end{equation*}
and Young's inequality, for $a, b\geq 0$, and $\delta >0$,
\begin{equation}\label{eq:Young-inequality}
a\,b \,\leq\, \frac{\delta^{\rp/2}}{\rp}\,a^\rp + \frac{1}{\rq\,\delta^{\rq/2}}\,b^\rq\,.
\end{equation}
Finally, we recall the continuous injection $i_\rp$ of $\H^1(\Omega)$ into $\L^\rp(\Omega)$ for $\rp\geq 1$ if $d=2$ or $\rp\in [1,6]$ if $d=3$.
More precisely, we have the following inequality
\begin{equation}\label{eq:Sobolev-inequality}
\|w\|_{\L^{\rp}(\Omega)} 
\,\leq\, \|i_\rp\|\,\|w\|_{\H^1(\Omega)}\quad 
\forall\,w \in \H^1(\Omega), 
\end{equation}
with $\|i_\rp\|> 0$ depending only on $|\Omega|$ and $\rp$ (see \cite[Theorem 1.3.4]{Quarteroni-Valli}).

We will denote by $\bi_\rp$ the vectorial version of $i_\rp$.


\section{The model problem and its velocity-vorticity-pressure formulation}\label{sec:vorticity-formulation}

Our model of interest is given by the Kelvin--Voigt--Brinkman--Forchheimer equations (see for instance \cite{tt2013}, \cite{SQ2018}, \cite{m2020}). 
More precisely, given the body force term $\f$ and a suitable initial data
$\bu_0$, the aforementioned system of equations is given by
\begin{equation}\label{eq:KVBF-1}
\begin{array}{c}
\ds \frac{\partial\,\bu}{\partial\,t} - \kappa^2\,\frac{\partial\,\Delta\bu}{\partial\,t} 
- \nu\,\Delta \bu + (\nabla\bu)\bu 
+ \tD\,\bu + \tF\,|\bu|^{\rho-2}\bu + \nabla p =  \f\,,\quad
\div(\bu) = 0 \qin \Omega\times (0,T]\,, \\[2ex]
\bu =\0 \qon \Gamma\times (0,T]\,,\quad
\bu(0) = \bu_0 \qin \Omega\,,\quad
(p,1)_{\Omega} = 0 \qin (0,T]\,,
\end{array}
\end{equation}
where the unknowns are the velocity field $\bu$ and the scalar pressure $p$.
In addition, the constant $\kappa>0$ is a length scale parameter characterizing 
the elasticity of the fluid, 
$\nu>0$ is the Brinkman coefficient (or the effective viscosity), $\tD>0$ is the Darcy coefficient, $\tF>0$ is the Forchheimer coefficient, and $\rho \in [3,4]$ is a given number.

We next introduce a new velocity-vorticity-pressure formulation for \eqref{eq:KVBF-1}.
To that end, we first define the trace operator $\bgamma_{*}$ and vorticity $\bomega$:
\begin{equation*}
\bgamma_{*}(\bv) := \left\{ 
\begin{array}{cl}
\ds \bv\cdot\bt &,\, \mbox{ for } d = 2\,, \\[2ex]
\ds \bv\times\bn &,\, \mbox{ for } d = 3\,,
\end{array}
\right.
\qan
\bomega := \bcurl(\bu) = \left\{ 
\begin{array}{cl}
\ds \frac{\partial\,u_2}{\partial\,x_1} - \frac{\partial\,u_1}{\partial\,x_2} &,\, \mbox{ for } d = 2\,, \\[2ex]
\ds \nabla\times\bu &,\, \mbox{ for } d = 3\,.
\end{array}
\right.
\end{equation*}
Note that the $\bcurl$ of a two-dimensional vector field is a scalar, 
whereas the $\bcurl$ of a three-dimensional vector field is a vector.
In order to avoid a multiplicity of notation, we nevertheless denote it like a vector, 
provided there is no confusion.
In addition, in $2$-D the $\bcurl$ of a scalar field $q$ is a vector given by 
$\bcurl(q)=\left( \frac{\partial\,q}{\partial\,x_2} , - \frac{\partial\,q}{\partial\,x_1} \right)^\rt$.
Then, employing the well-known identity \cite[Section~I.2.3]{Girault-Raviart}
\begin{equation}\label{eq:curl-curl-identity}
\bcurl(\bcurl(\bv)) = -\Delta\bv + \nabla(\div(\bv))
\end{equation}
in combination with the incompressibility condition $\div(\bu) = 0$ in $\Omega\times (0,T]$, we find that \eqref{eq:KVBF-1} can be rewritten, equivalently, as follows: Find $(\bu,\bomega,p)$ in suitable spaces to be indicated below such that
\begin{equation}\label{eq:KVBF-2}
\begin{array}{c}
\ds \frac{\partial\,\bu}{\partial\,t} - \kappa^2\,\frac{\partial\,\Delta\bu}{\partial\,t} + \tD\,\bu + \tF\,|\bu|^{\rho-2}\bu + (\nabla\bu)\bu + \nu\,\bcurl(\bomega) + \nabla p \,=\, \f \qin \Omega\times (0,T] \,, \\ [2ex]
\ds \bomega \,=\, \bcurl(\bu),\quad
\div(\bu) \,=\, 0 \qin \Omega\times (0,T] \,, \\ [2ex]
\ds \bu \,=\, \0 \qon \Gamma\times (0,T],\quad
\bu(0) \,=\, \bu_0 \qin \Omega,\quad
(p,1)_\Omega \,=\, 0 \qin (0,T]\,.
\end{array}
\end{equation}

Next, multiplying the first equation of \eqref{eq:KVBF-2} by a suitable test function $\bv$, we obtain
\begin{align}
(\partial_t\,\bu,\bv)_{\Omega} 
- \kappa^2(\partial_t\,\Delta\bu,\bv)_{\Omega}
+ \tD\,(\bu,\bv)_{\Omega} 
+ \tF\,(|\bu|^{\rho-2}\bu,\bv)_{\Omega} & \nonumber \\[1ex]
+\, ((\nabla\bu)\bu,\bv)_{\Omega}
+ \nu\,(\bcurl(\bomega), \bv)_{\Omega} 
+ (\nabla p,\bv)_{\Omega} & \,=\, (\f,\bv)_{\Omega}\,, \label{eq:KVBF-formulation-0}
\end{align}
where we use the notation $\partial_t := \dfrac{\partial}{\partial\,t}$.
Notice that the fourth and fifth terms in the left-hand side of \eqref{eq:KVBF-formulation-0} 
require $\bu$ to live in a smaller space than $\bL^2(\Omega)$.
In particular, by applying Cauchy--Schwarz and H\"older's inequalities and then the continuous injection 
$\bi_\rho$ (resp. $\bi_4$) of $\bH^1(\Omega)$ into $\bL^\rho(\Omega)$ (resp. $\bL^4(\Omega)$), 
with $\rho\in [3,4]$, we find that
\begin{equation}\label{eq:Forchheimer-bound}
\left| (|\bu|^{\rho-2}\bu,\bv)_{\Omega} \right| 
\,\leq\, \|\bu\|^{\rho-1}_{\bL^{\rho}(\Omega)}\,\|\bv\|_{\bL^{\rho}(\Omega)}
\,\leq\, \|\bi_\rho\|^\rho\,\|\bu\|^{\rho-1}_{\bH^1(\Omega)}\,\|\bv\|_{\bH^1(\Omega)}
\end{equation}
and 
\begin{equation}\label{eq:Convective-bound}
\left| ((\nabla\bu)\bz,\bv)_\Omega \right|
\,\leq\, \|\nabla\bu\|_{\bbL^2(\Omega)}\,\|\bz\|_{\bL^4(\Omega)}\,\|\bv\|_{\bL^4(\Omega)}
\,\leq\, \|\bi_4\|^2\,\|\bu\|_{\bH^1(\Omega)}\,\|\bz\|_{\bH^1(\Omega)}\,\|\bv\|_{\bH^1(\Omega)} \,,
\end{equation}
for all $\bu, \bv, \bz\in \bH^1(\Omega)$,
which, together with the Dirichlet boundary condition $\bu = \0$ on $\Gamma$ (cf. \eqref{eq:KVBF-2}) suggest to look for the unknown $\bu$ in $\bH^1_0(\Omega)$ and to restrict the set of corresponding test functions $\bv$ to the same space.
In addition, employing Green's formula \cite[Theorem~I.2.11]{Girault-Raviart}, the sixth term in 
the left-hand side in \eqref{eq:KVBF-formulation-0}, can be rewritten as
\begin{equation}\label{eq:Green-formula}
(\bcurl(\bomega),\bv)_{\Omega} \,=\, (\bomega,\bcurl(\bv))_{\Omega} 
- \pil \bgamma_{*}(\bv),\bomega\pir_{\Gamma} 
\,=\, (\bomega,\bcurl(\bv))_{\Omega} \quad \forall\,\bv\in \bH^1_0(\Omega)\,.
\end{equation}
Thus, replacing back \eqref{eq:Green-formula} into \eqref{eq:KVBF-formulation-0}, integrating by parts the terms $(\partial_t\,\Delta\bu,\bv)_{\Omega}$ and $(\nabla p,\bv)_{\Omega}$, and incorporating the second and third equations of \eqref{eq:KVBF-2} in a weak sense, we obtain the system
\begin{align}
(\partial_t\,\bu,\bv)_{\Omega} + \kappa^2(\partial_t\,\nabla\bu,\nabla\bv)_{\Omega}
+ \tD\,(\bu,\bv)_{\Omega} 
+ \tF\,(|\bu|^{\rho-2}\bu,\bv)_{\Omega} & \nonumber \\[1ex]
+\, ((\nabla\bu)\bu,\bv)_{\Omega}
+ \nu\,(\bomega,\bcurl(\bv))_{\Omega} 
- (p,\div(\bv))_{\Omega} &\,=\, (\f,\bv)_{\Omega}\,, \label{eq:KVBF-formulation-1} \\[1ex]
\ds \nu\,(\bomega,\bpsi)_{\Omega} - \nu\,(\bpsi,\bcurl(\bu))_{\Omega} &\,=\, 0\,, \label{eq:KVBF-formulation-2} \\[1ex]
\ds (q,\div(\bu))_{\Omega} &\,=\, 0 \,, \label{eq:KVBF-formulation-3}
\end{align}
for all $(\bv, \bpsi, q)\in \bH^1_0(\Omega)\times \bL^2(\Omega)\times \L^2_0(\Omega)$, where $\L^2_0(\Omega) := \Big\{ q\in \L^2(\Omega) :\quad (q,1)_{\Omega} = 0 \Big\}$.

Next, in order to write the above formulation in a more suitable way for the analysis to be developed below, we set
\begin{equation*}
\ubu := (\bu, \bomega)\in \bH^1_0(\Omega)\times \bL^2(\Omega)\,,
\end{equation*}
with corresponding norm given by
\begin{equation*}
\|\ubv\| = \|(\bv,\bpsi)\| := \left( \|\bv\|^2_{\bH^1(\Omega)} + \|\bpsi\|^2_{\bL^2(\Omega)} \right)^{1/2} \quad \forall\,\ubv := (\bv,\bpsi)\in \bH^1_0(\Omega)\times \bL^2(\Omega)\,.
\end{equation*}
Hence, the weak form associated with the Kelvin--Voigt--Brinkman--Forchheimer equations \eqref{eq:KVBF-formulation-1}--\eqref{eq:KVBF-formulation-3} reads:
Given $\f:[0,T]\to \bH^{-1}(\Omega)$ and $\bu_0\in \bH^1_0(\Omega)$, 
find $(\ubu,p):[0,T] \to \big(\bH^1_0(\Omega)\times \bL^2(\Omega)\big)\times \L^2_0(\Omega)$ such that $\bu(0) = \bu_0$ and, for a.e. $t\in (0,T)$,
\begin{equation}\label{eq:KVBF-variational-formulation}
\begin{array}{llll}
\dfrac{\partial}{\partial\,t}\,[\cE(\ubu(t)),\ubv] + [\cA(\bu(t))(\ubu(t)),\ubv] + [\cB'(p(t)),\ubv] & = & [\bF(t),\ubv] & \forall\,\ubv\in \bH^1_0(\Omega)\times \bL^2(\Omega)\,, \\[2ex]
-\,[\cB(\ubu(t)),q] & = & 0 & \forall\,q\in \L^2_0(\Omega)\,,
\end{array}
\end{equation}
where, given $\bz\in \bH^1_0(\Omega)$, the operators $\cE, \cA(\bz) : \big(\bH^1_0(\Omega)\times \bL^2(\Omega)\big)\to \big(\bH^1_0(\Omega)\times \bL^2(\Omega)\big)'$, and $\cB: \big(\bH^1_0(\Omega)\times \bL^2(\Omega)\big)\to \L^2_0(\Omega)'$ are defined, respectively, as
\begin{align}
[\cE(\ubu),\ubv] \,:=\,& (\bu,\bv)_\Omega + \kappa^2\,(\nabla\bu,\nabla\bv)_{\Omega} \,, \label{eq:operator-E} \\[2ex]
[\cA(\bz)(\ubu),\ubv] \,:=\,& [\ba(\ubu),\ubv] + [\bc(\bz)(\ubu),\ubv] \,, \label{eq:operator-Au} \\[2ex]
[\ba(\ubu),\ubv] \,:=\,& \tD\,(\bu,\bv)_{\Omega} 
+ \tF\,(|\bu|^{\rho-2}\bu,\bv)_{\Omega}
+ \nu\,(\bomega,\bpsi)_{\Omega} \nonumber \\[2ex]
& +\, \nu\,(\bomega,\bcurl(\bv))_{\Omega} 
- \nu\,(\bpsi,\bcurl(\bu))_{\Omega} \,, \label{eq:operator-a} \\[2ex]
[\bc(\bz)(\ubu),\ubv] \,:=\,& ((\nabla\bu)\bz,\bv)_{\Omega} \,, \label{eq:operator-cz} \\[2ex]
[\cB(\ubv),q] \,:=\,& - (q,\div(\bv))_{\Omega} \,, \label{eq:operator-B}
\end{align}
and $\bF\in (\bH^1_0(\Omega)\times \bL^2(\Omega))'$ is the bounded linear functional given by
\begin{equation}\label{eq:functional-F}
[\bF,\ubv] \,:=\, (\f,\bv)_{\Omega}\,.
\end{equation}
In all the terms above, $[\cdot,\cdot]$ denotes the duality pairing induced by the corresponding operators.
In addition, we let $\cB':\L^2_0(\Omega)\to \big(\bH^1_0(\Omega)\times \bL^2(\Omega)\big)'$ be the adjoint of $\cB$, which satisfies $[\cB'(q),\ubv] = [\cB(\ubv),q]$ for all $\ubv = (\bv,\bpsi)\in \bH^1_0(\Omega)\times \bL^2(\Omega)$ and $q\in \L^2_0(\Omega)$.

Now we define the kernel space of the operator $\cB$,
\begin{equation*}
\bV :=\, \Big\{ \ubv=(\bv,\bpsi)\in \bH^1_0(\Omega)\times\bL^2(\Omega) :\quad [\cB(\ubv),q] = 0 \quad \forall\,q\in \L^2_0(\Omega) \Big\} \,,
\end{equation*}
which from the definition of the operator $\cB$ (cf. \eqref{eq:operator-B}) can be rewritten as 
\begin{equation}\label{eq:def-K}
\bV = \bK\times \bL^2(\Omega)\,,\quad \mbox{where}\quad \bK \,:=\, \Big\{ \bv\in \bH^1_0(\Omega) :\quad \div(\bv) = 0 \qin \Omega \Big\}.
\end{equation}
This leads us to the reduced problem: Given $\f:[0,T]\to \bH^{-1}(\Omega)$ and $\bu_0\in \bK$, 
find $\ubu:[0,T] \to \bK\times \bL^2(\Omega)$ such that $\bu(0) = \bu_0$ and, for a.e. $t\in (0,T)$,
\begin{equation}\label{eq:KVBF-reduced-VF}
\dfrac{\partial}{\partial\,t}\,[\cE(\ubu(t)),\ubv] + [\cA(\bu(t))(\ubu(t)),\ubv]
\,=\, [\bF(t),\ubv] \quad \forall\,\ubv\in \bK\times \bL^2(\Omega) \,.
\end{equation}
According to the definition of $\bK$ (cf. \eqref{eq:def-K}), owing to the inf-sup condition of $\cB$ (cf. \cite[Corollary B.71]{Ern-Guermond}):
\begin{equation}\label{eq:B-inf-sup-condition}
\sup_{\0\neq \ubv\in \bH^1_0(\Omega)\times \bL^2(\Omega)} \frac{[\cB(\ubv),q]}{\|\ubv\|}
\,\geq\, \sup_{\0\neq \bv\in \bH^1_0(\Omega)} \frac{\ds\int_{\Omega} q\,\div(\bv) }{\|\bv\|_{\bH^1(\Omega)}}
\,\geq\, \beta\,\|q\|_{\L^2(\Omega)} \quad \forall\,q\in \L^2_0(\Omega) \,,
\end{equation}
with $\beta>0$, and using standard arguments, it is not difficult to show that the problem \eqref{eq:KVBF-reduced-VF} 
is equivalent to \eqref{eq:KVBF-variational-formulation}.
This result is stated next and the proof is omitted.
\begin{lem}\label{lem:equivalent-problems}
If $(\ubu,p):[0,T]\to \big( \bH^1_0(\Omega)\times \bL^2(\Omega) \big)\times \L^2_0(\Omega)$ 
is a solution of \eqref{eq:KVBF-variational-formulation}, then $\bu:[0,T]\to \bK$ 
and $\ubu=(\bu,\bomega)$ is a solution of \eqref{eq:KVBF-reduced-VF}.
Conversely, if $\ubu:[0,T]\to \bK\times \bL^2(\Omega)$ is a solution of \eqref{eq:KVBF-reduced-VF}, 
then there exists a unique $p:[0,T]\to \L^2_0(\Omega)$ such that $(\ubu,p)$ 
is a solution of \eqref{eq:KVBF-variational-formulation}.
\end{lem}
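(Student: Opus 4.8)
The plan is to establish the two implications separately; this is the standard equivalence between a constrained (kernel) formulation and an unconstrained mixed formulation, and the only nontrivial ingredient is the inf-sup condition \eqref{eq:B-inf-sup-condition}, exactly as in \cite[Corollary B.71]{Ern-Guermond}.

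For the forward direction, I would take $(\ubu,p)$ solving \eqref{eq:KVBF-variational-formulation}. The second equation of \eqref{eq:KVBF-variational-formulation} asserts that $[\cB(\ubu(t)),q]=0$ for all $q\in\L^2_0(\Omega)$, which by the definition of $\bV$ and its characterization \eqref{eq:def-K} means $\ubu(t)\in\bV=\bK\times\bL^2(\Omega)$, and hence $\bu(t)\in\bK$ for a.e.\ $t$. Restricting the test functions in the first equation of \eqref{eq:KVBF-variational-formulation} to $\ubv\in\bK\times\bL^2(\Omega)=\bV$, the pressure term drops out, since $[\cB'(p(t)),\ubv]=[\cB(\ubv),p(t)]=0$ for every $\ubv\in\bV$; what remains is precisely \eqref{eq:KVBF-reduced-VF}.

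For the converse, given $\ubu:[0,T]\to\bK\times\bL^2(\Omega)$ solving \eqref{eq:KVBF-reduced-VF}, I would introduce, for a.e.\ $t$, the residual functional $\cR(t)\in\big(\bH^1_0(\Omega)\times\bL^2(\Omega)\big)'$ defined by
\begin{equation*}
[\cR(t),\ubv]\,:=\,[\bF(t),\ubv]-\frac{\partial}{\partial\,t}[\cE(\ubu(t)),\ubv]-[\cA(\bu(t))(\ubu(t)),\ubv].
\end{equation*}
By \eqref{eq:KVBF-reduced-VF} this functional vanishes on $\bV$, i.e.\ $\cR(t)$ lies in the annihilator (polar set) $\bV^{\circ}$. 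The inf-sup condition \eqref{eq:B-inf-sup-condition} guarantees, via \cite[Corollary B.71]{Ern-Guermond}, that $\cB'$ is an isomorphism from $\L^2_0(\Omega)$ onto $\bV^{\circ}$; consequently there exists a unique $p(t)\in\L^2_0(\Omega)$ with $\cB'(p(t))=\cR(t)$, which is exactly the first equation of \eqref{eq:KVBF-variational-formulation}. The second equation holds because $\bu(t)\in\bK$ forces $\div(\bu(t))=0$, whence $[\cB(\ubu(t)),q]=-(q,\div(\bu(t)))_\Omega=0$ by \eqref{eq:operator-B}.

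The point requiring care, and the main obstacle, is in the converse: one must verify that $\cR(t)$ is a well-defined bounded linear functional on the whole space $\bH^1_0(\Omega)\times\bL^2(\Omega)$ for a.e.\ $t$, so that the isomorphism can be applied. This reduces to checking that the time-derivative term $\partial_t[\cE(\ubu(t)),\cdot]$ and the nonlinear and convective contributions in $\cA(\bu(t))(\ubu(t))$ define bounded functionals, which follows from the continuity bounds \eqref{eq:Forchheimer-bound}--\eqref{eq:Convective-bound} together with the regularity of $\ubu$ inherited from \eqref{eq:KVBF-reduced-VF}. Once this is in place, the isomorphism property delivered by the inf-sup condition yields at once both the existence and the uniqueness of $p$, completing the equivalence.
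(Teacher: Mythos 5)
Your proof is correct and follows precisely the route the paper intends: the paper omits the proof, appealing to the inf-sup condition \eqref{eq:B-inf-sup-condition} and ``standard arguments,'' and your annihilator/isomorphism argument for $\cB'$ via \cite[Corollary B.71]{Ern-Guermond}, together with the trivial restriction of test functions to $\bV$ in the forward direction, is exactly that standard argument. Your added remark on checking boundedness of the residual functional via \eqref{eq:Forchheimer-bound}--\eqref{eq:Convective-bound} is a sensible precaution consistent with the paper's setup.
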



\section{Well-posedness of the model}\label{sec:well-posedness-model}

In this section we establish the solvability of \eqref{eq:KVBF-reduced-VF} (equivalently of  \eqref{eq:KVBF-variational-formulation}). 
To that end we first collect some previous results that will be used in the forthcoming analysis.

\subsection{Preliminary results}

We begin by recalling the key result \cite[Theorem~IV.6.1(b)]{Showalter}, which will be used to establish the existence of a solution to \eqref{eq:KVBF-reduced-VF}.
In what follows, an operator $A$ from a real vector space $E$ to its algebraic dual $E'$ is symmetric and monotone if, respectively,
\begin{equation*}
[A(x),y] = [A(y),x] \qan
[A(x) - A(y),x - y] \geq 0 \quad \forall\,x, y\in E\,.
\end{equation*}
In addition,  $Rg(A)$ denotes the range of $A$.
\begin{thm}\label{thm:well-posed-parabolic-problem}
Let the linear, symmetric and monotone operator $\cN$ be given for the real vector space $E$ to its algebraic dual $E'$, and let $E'_b$ be the Hilbert space which is the dual of $E$ with the seminorm
\begin{equation*}
|x|_b = [\cN(x),x]^{1/2} \quad x\in E.
\end{equation*}
Let $\cM\subset E\times E'_b$ be a relation with domain $\cD = \Big\{ x\in E \,:\, \cM(x) \neq \emptyset \Big\}$.
	
Assume $\cM$ is monotone and $Rg(\cN + \cM) = E'_b$.
Then, for each $f\in \W^{1,1}(0,T;E'_b)$ and for each $u_0\in \cD$, there is a solution $u$ of
\begin{equation}\label{eq:parabolic-problem}
\frac{\partial}{\partial\,t}\big(\cN(u(t))\big) + \cM\big(u(t)\big) \ni f(t) \quad\mbox{a.e.}\quad 0 < t < T,
\end{equation}
with
\begin{equation*}
\cN(u)\in \W^{1,\infty}(0,T;E'_b),\quad u(t)\in \cD,\quad \mbox{ for all }\, 0\leq t\leq T,\qan \cN(u(0)) = \cN(u_0).
\end{equation*}
\end{thm}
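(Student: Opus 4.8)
The plan is to recognize \eqref{eq:parabolic-problem} as a degenerate (implicit) evolution inclusion and to recast it, using the energy structure carried by $\cN$, as a \emph{standard} Cauchy problem governed by a single maximal monotone operator on the Hilbert space $E'_b$. The conclusion will then follow from the classical generation theory for such problems. Throughout, the degeneracy of $\cN$ (which in the present application annihilates the vorticity component, since $\cE$ in \eqref{eq:operator-E} depends only on $\bu$) is precisely what forces this abstract route.

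First I would make $E'_b$ precise. Because $\cN$ is linear, symmetric and monotone, the form $(x,y)\mapsto[\cN(x),y]$ is symmetric and positive semidefinite on $E$ and induces the seminorm $|\cdot|_b$. Quotienting $E$ by the kernel of $|\cdot|_b$ and completing yields a Hilbert space $E_b$ whose dual is the space $E'_b$ in the statement; by construction $\cN$ extends to the Riesz isometry $J:E_b\to E'_b$, with $\cN(x)=J(x)$ for $x\in E$. The resulting inner product on $E'_b$ satisfies the key duality identity $(\cN(x),\zeta)_{E'_b}=\langle\zeta,x\rangle$ for $x\in E$ and $\zeta\in E'_b$, where $\langle\cdot,\cdot\rangle$ is the $E'_b$--$E_b$ pairing extending $[\cdot,\cdot]$. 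This identity is what transfers the algebraic pairing into the Hilbert inner product of $E'_b$, and it must be verified to hold consistently modulo $\ker|\cdot|_b$.

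Next I would introduce on $E'_b$ the (multivalued) operator $\mathcal{G}:=\cM\circ\cN^{-1}$, namely $\mathcal{G}(w):=\{\xi\in E'_b:\ \xi\in\cM(x)\ \text{for some}\ x\in\cD\ \text{with}\ \cN(x)=w\}$, which is well defined as a relation even though $\cN$ is not injective. Setting $w(t):=\cN(u(t))$ rewrites \eqref{eq:parabolic-problem} as the explicit inclusion $w'(t)+\mathcal{G}(w(t))\ni f(t)$ in $E'_b$. Two properties remain. For monotonicity, take $w_i=\cN(x_i)$ and $\xi_i\in\cM(x_i)$; the duality identity gives $(\xi_1-\xi_2,\,w_1-w_2)_{E'_b}=\langle\xi_1-\xi_2,\,x_1-x_2\rangle=[\xi_1-\xi_2,\,x_1-x_2]\ge 0$ by monotonicity of $\cM$. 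For maximality, Minty's theorem reduces matters to showing $Rg(I_{E'_b}+\mathcal{G})=E'_b$; since $(I_{E'_b}+\mathcal{G})(\cN(x))\ni\cN(x)+\cM(x)$, one has $Rg(I_{E'_b}+\mathcal{G})=Rg(\cN+\cM)$, which equals $E'_b$ by hypothesis. Hence $\mathcal{G}$ is maximal monotone on $E'_b$. Finally I would invoke the Kōmura--Brézis generation theorem for maximal monotone operators in Hilbert space: with $f\in\W^{1,1}(0,T;E'_b)$ and $w(0)=\cN(u_0)\in D(\mathcal{G})$ (which holds since $u_0\in\cD$ means $\cM(u_0)\neq\emptyset$), there is a unique strong solution $w\in\W^{1,\infty}(0,T;E'_b)$ with $w(t)\in D(\mathcal{G})$ for all $t\in[0,T]$. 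Unwinding the definition of $\mathcal{G}$ produces $u(t)\in\cD$ with $\cN(u(t))=w(t)$, whence $\cN(u)=w\in\W^{1,\infty}(0,T;E'_b)$, $\cN(u(0))=\cN(u_0)$, and $u$ solves \eqref{eq:parabolic-problem}, as asserted.

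I expect the main obstacle to be the construction and identification of $E'_b$ together with the duality relation that turns $\cN$ into a Riesz map: because $\cN$ is only positive \emph{semi}definite, the reduction to a nondegenerate Hilbert-space inclusion must be carried out consistently on the quotient $E/\ker|\cdot|_b$, $\cN^{-1}$ and $\mathcal{G}$ must be shown to be well defined as relations there, and one must check that the abstract range condition $Rg(\cN+\cM)=E'_b$ is exactly what delivers $Rg(I_{E'_b}+\mathcal{G})=E'_b$. Once this correspondence is established, the monotonicity, maximality, and the appeal to the semigroup generation theorem are routine.
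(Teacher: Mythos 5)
Your argument is correct and follows essentially the same route as the proof of the result the paper cites here without reproving it (Showalter, Theorem~IV.6.1(b)): the substitution $w=\cN(u)$, the identification of $\cM\circ\cN^{-1}$ as a monotone relation on $E'_b$ whose maximality follows from Minty's theorem because $Rg(I+\cM\circ\cN^{-1})=Rg(\cN+\cM)=E'_b$, and the appeal to the Br\'ezis generation theorem for maximal monotone operators with $f\in\W^{1,1}(0,T;E'_b)$ and $\cN(u_0)\in D(\cM\circ\cN^{-1})$. The points you flag as delicate (the quotient construction of $E_b$, the duality identity making $\cN$ the Riesz map, and the pointwise selection of $u(t)$ from $\cN^{-1}(w(t))$) are handled exactly as you describe, so no gap remains.
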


In addition, in order to provide the range condition in Theorem \ref{thm:well-posed-parabolic-problem} 
we will require the Browder--Minty theorem \cite[Theorem 9.14-1]{Ciarlet}.
\begin{thm}\label{thm:Browder-Minty-theorem}
Let $\V$ be a real separable reflexive Banach space and let $\cA : \V \to \V'$ be a  coercive and hemicontinuous monotone operator. Then $\cA$ is surjective, i.e., given any $f\in\V'$ there exists $u$ such that
\begin{equation*}
u\in\V \qan \cA(u) = f \,.
\end{equation*}
If $\cA$ is strictly monotone, then $\cA$ is also injective.
\end{thm}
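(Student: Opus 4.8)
The plan is to prove surjectivity by a Galerkin (finite-dimensional projection) argument, extract a weakly convergent sequence of approximate solutions, and identify the limit using monotonicity via Minty's trick; injectivity will then follow immediately from strict monotonicity. First I would exploit separability of $\V$: choose a countable set $\{w_1, w_2, \dots\}$ whose finite linear combinations are dense, and set $\V_n := \mathrm{span}\{w_1, \dots, w_n\}$, so that $\bigcup_n \V_n$ is dense in $\V$. On each $\V_n$ I would pose the Galerkin problem of finding $u_n \in \V_n$ with $[\cA(u_n), w_j] = [f, w_j]$ for $j = 1, \dots, n$. Writing $u_n = \sum_{i=1}^n \xi_i w_i$, this becomes a system $G(\xi) = 0$ in $\bbR^n$ with $G_j(\xi) := [\cA(\sum_i \xi_i w_i), w_j] - [f, w_j]$. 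To ensure $G$ is continuous I would record the standard fact that a monotone, hemicontinuous, everywhere-defined operator is demicontinuous (strong-to-weak continuous); composing with the fixed evaluation functionals $w_1,\dots,w_n$ yields continuity of $G$.

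Next I would solve the finite-dimensional system using the Brouwer-based ``acute angle'' lemma: if $G:\bbR^n\to\bbR^n$ is continuous and $G(\xi)\cdot\xi \geq 0$ on the sphere $|\xi| = R$, then $G$ has a zero in the closed ball $|\xi|\leq R$. Here $G(\xi)\cdot\xi = [\cA(u_n),u_n] - [f,u_n] \geq \|u_n\|_{\V}\big( [\cA(u_n),u_n]/\|u_n\|_{\V} - \|f\|_{\V'}\big)$, and coercivity forces the bracket to be positive once $\|u_n\|_{\V}$ exceeds a threshold independent of $n$. Hence each Galerkin problem is solvable and the solutions satisfy a uniform bound $\|u_n\|_{\V}\leq R$.

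By reflexivity I would then extract a subsequence $u_n \rightharpoonup u$ in $\V$. Since a monotone everywhere-defined operator is locally bounded, $\{\cA(u_n)\}$ is bounded in $\V'$, so along a further subsequence $\cA(u_n)\rightharpoonup \chi$ in $\V'$. Passing to the limit in $[\cA(u_n),w_j] = [f,w_j]$ for each fixed $j$ and invoking density gives $\chi = f$. Testing the Galerkin equation against $u_n$ itself furnishes the energy identity $[\cA(u_n),u_n] = [f,u_n] \to [f,u] = [\chi,u]$.

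Finally I would run Minty's trick: monotonicity yields $[\cA(u_n) - \cA(v), u_n - v]\geq 0$ for every $v\in\V$, and passing to the limit (using $u_n\rightharpoonup u$, $\cA(u_n)\rightharpoonup\chi$, and the energy identity) gives $[\chi - \cA(v), u - v]\geq 0$ for all $v$. Choosing $v = u - tw$ with $t>0$, dividing by $t$, and letting $t\to 0^+$ via hemicontinuity yields $[\chi - \cA(u), w]\geq 0$ for all $w\in\V$; replacing $w$ by $-w$ gives equality, so $\chi = \cA(u)$ and therefore $\cA(u) = f$. For injectivity, if $\cA$ is strictly monotone and $\cA(u_1) = \cA(u_2)$, then $[\cA(u_1) - \cA(u_2), u_1 - u_2] = 0$ forces $u_1 = u_2$. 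The main obstacle I anticipate is the limit passage rather than the finite-dimensional solvability: securing the $\V'$-boundedness of $\{\cA(u_n)\}$ (through local boundedness of monotone operators) and executing Minty's trick so that the weak limit $\chi$ is correctly identified with $\cA(u)$, not merely retained as a candidate dual element.
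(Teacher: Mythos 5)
The paper does not actually prove this statement: it is imported verbatim as a classical result, cited to \cite[Theorem 9.14-1]{Ciarlet}, and used as a black box in Lemma \ref{J-well-defined} and elsewhere. So there is no in-paper proof to compare against. That said, your argument is the standard Galerkin--Brouwer--Minty proof of the Browder--Minty theorem and is essentially correct: separability gives the finite-dimensional subspaces, the acute-angle corollary of Brouwer's theorem plus coercivity solves the Galerkin systems with a bound on $\|u_n\|_{\V}$ uniform in $n$, reflexivity gives a weak limit, and Minty's trick with hemicontinuity identifies $\cA(u) = f$; injectivity under strict monotonicity is immediate. The one step I would tighten is the boundedness of $\{\cA(u_n)\}$ in $\V'$: local boundedness of an everywhere-defined monotone operator (Rockafellar) gives boundedness only on a neighborhood of each point, not on bounded sets, so in infinite dimensions it does not by itself cover the bounded sequence $\{u_n\}$. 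The standard fix is to use monotonicity directly: for each fixed $v\in\V$, $[\cA(u_n),u_n - v] \geq [\cA(v),u_n - v]$ combined with the energy identity $[\cA(u_n),u_n] = [f,u_n]$ bounds $[\cA(u_n),v]$ (and, replacing $v$ by $-v$, its negative) uniformly in $n$, whence the uniform boundedness principle yields boundedness of $\{\cA(u_n)\}$ in $\V'$. With that substitution your proof is complete and is the same route taken in the cited reference.
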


Next, we establish the stability properties of the operators involved in \eqref{eq:KVBF-variational-formulation}.
We begin by observing that the operators $\cE, \cB$ and the functional $\bF$ are linear.
In turn, from \eqref{eq:operator-E}, \eqref{eq:operator-B} and \eqref{eq:functional-F}, 
and employing H\"older and Cauchy--Schwarz inequalities, there hold
\begin{align}
\big| [\cB(\ubv),q] \big| &\,\leq \, \sqrt{d}\,\|\ubv\|\,\|q\|_{\L^2(\Omega)} 
\quad \forall\,(\ubv, q)\in \big(\bH^1_0(\Omega)\times \bL^2(\Omega)\big)\times \L^2_0(\Omega) \,, \label{eq:continuity-B} \\[1ex]
\big| [\bF,\ubv] \big| &\,\leq\, \|\f\|_{\bH^{-1}(\Omega)}\,\|\bv\|_{\bH^1(\Omega)} 
\,\leq\, \|\f\|_{\bH^{-1}(\Omega)} \, \|\ubv\| 
\quad \forall\,\ubv\in \bH^1_0(\Omega)\times \bL^2(\Omega) \,, \label{eq:continuity-F} 
\end{align}
and
\begin{equation}\label{eq:continuity-monotonicity-E}
\big| [\cE(\ubu),\ubv] \big| \,\leq \, \max\{1,\kappa^2\}\,\|\ubu\|\,\|\ubv\|,\quad
[\cE(\ubv),\ubv] \,\geq\, \min\{1,\kappa^2\}\,\|\bv\|^2_{\bH^1(\Omega)} 
\quad \forall\,\ubu, \ubv\in \bH^1_0(\Omega)\times \bL^2(\Omega) \,, 
\end{equation}
which implies that $\cB$ and $\bF$ are bounded and continuous, and $\cE$ is 
bounded, continuous, and monotone.

On the other hand, given $\bz\in \bH^1_0(\Omega)$, employing the Cauchy--Schwarz and H\"older inequalities, \eqref{eq:Forchheimer-bound} and \eqref{eq:Convective-bound}, 
it is readily seen that the nonlinear operator 
$\cA(\bz)$ (cf. \eqref{eq:operator-Au}) is bounded, that is
\begin{equation}\label{eq:boundeness-A}
\big| [\cA(\bz)(\ubu),\ubv] \big| \,\leq\, 
C_{\cA}\,\Big\{ \big(1 + \|\bz\|_{\bH^1(\Omega)}\big)\,\|\bu\|_{\bH^1(\Omega)} 
+ \|\bu\|^{\rho - 1}_{\bH^1(\Omega)} + \|\bomega\|_{\bL^2(\Omega)} \Big\}\,\|\ubv\|\,,
\end{equation}
with $C_{\cA}>0$ depending on $\tD, \tF, \nu, \|\bi_4\|$, and $\|\bi_\rho\|$.
In addition, using similar arguments to \eqref{eq:Convective-bound}, it is not difficult to see that the operator $\bc(\bz)$ (cf. \eqref{eq:operator-cz}) satisfies
\begin{equation}\label{eq:continuity-cz-u}
\begin{array}{l}
\ds \big| [\bc(\bz)(\ubu_1 - \ubu_2),\ubv] \big| 
\,\leq\, \|\bz\|_{\bL^4(\Omega)}\,\|\bu_1 - \bu_2\|_{\bH^1(\Omega)}\,\|\bv\|_{\bL^4(\Omega)} \\ [2ex]
\ds\quad \,\leq\, \|\bi_4\|^2\,\|\bz\|_{\bH^1(\Omega)}\,\|\ubu_1 - \ubu_2\|\,\|\ubv\| \quad 
\forall\, \bz \in \bH^1_0(\Omega),\,\forall\, \ubu_1, \ubu_2, \ubv\in \bH^1_0(\Omega)\times \bL^2(\Omega),
\end{array}
\end{equation}
and
\begin{equation}\label{eq:continuity-cz-z}
\begin{array}{l}
\ds \big| [\bc(\bz_1 - \bz_2)(\ubu),\ubv] \big| 
\,\leq\, \|\bz_1 - \bz_2\|_{\bL^4(\Omega)}\,\|\bu\|_{\bH^1(\Omega)}\,\|\bv\|_{\bL^4(\Omega)} \\ [2ex]
\ds\quad \,\leq\, \|\bi_4\|^2\,\|\bz_1 - \bz_2\|_{\bH^1(\Omega)}\,\|\ubu\|\,\|\ubv\| \quad \forall\, \bz_1, \bz_2\in \bH^1_0(\Omega),\,\forall\, \ubu, \ubv\in \bH^1_0(\Omega)\times \bL^2(\Omega) \,.
\end{array}
\end{equation}
In turn, observe that for any $\bz\in \bK$ (cf. \eqref{eq:def-K}), there holds
\begin{equation}\label{eq:null-identity-cz}
[\bc(\bz)(\ubv),\ubv] = 0 \quad \forall\,\ubv\in \bH^1_0(\Omega)\times \bL^2(\Omega).
\end{equation}

Finally, given $\bu\in \bK$ (cf. \eqref{eq:def-K}) and recalling the definition of the operators 
$\cE$ and $\cA(\bu)$ (cf. \eqref{eq:operator-E}, \eqref{eq:operator-Au}), 
we note that problem \eqref{eq:KVBF-reduced-VF} 
can be written in the form of \eqref{eq:parabolic-problem} with
\begin{equation}\label{eq:setting-E-u-N-M}
E \,:=\, \bK\times \bL^2(\Omega)\,,\quad 
u \,:=\, \ubu = (\bu, \bomega) \,,\quad
\cN\,:=\, \cE\,,\quad
\cM\,:=\, \cA(\bu) \,.
\end{equation}
Let $E'_b$ be the Hilbert space that is the dual of $\bK\times \bL^2(\Omega)$ 
with the seminorm induced by the operator $\cE$ (cf. \eqref{eq:operator-E}), which  
thanks to the fact that $\kappa>0$, is given by
\begin{equation*}
\|\ubv\|_{\cE} := \Big\{ (\bv,\bv)_{\Omega} + \kappa^2\,(\nabla\bv,\nabla\bv)_{\Omega} \Big\}^{1/2} \equiv \|\bv\|_{\bH^1(\Omega)} \quad \forall\,\ubv=(\bv,\bpsi)\in \bH^1_0(\Omega)\times \bL^2(\Omega) \,.
\end{equation*}
Then we define the spaces
\begin{equation}\label{eq:setting-Eb-D}
E'_b \,:=\, \bH^{-1}(\Omega)\times \{\0\} \,,\quad
\cD \,:=\, \Big\{ \ubu\in \bK\times \bL^2(\Omega) :\quad \cM(\ubu)\in E'_b \Big\} \,.
\end{equation}
In the next section we prove the hypotheses of Theorem~\ref{thm:well-posed-parabolic-problem} to establish the well-posedness of \eqref{eq:KVBF-reduced-VF}.

\subsection{Range condition}

We begin with the verification of the range condition in Theorem~\ref{thm:well-posed-parabolic-problem}.
Let us consider the resolvent system associated with \eqref{eq:KVBF-reduced-VF}:
Find $\ubu = (\bu,\bomega)\in \bK\times \bL^2(\Omega)$ such that
\begin{equation}\label{eq:reduced-problem}
[(\cE + \cA(\bu))(\ubu),\ubv] = [\wh{\bF},\ubv] \quad \forall\,\ubv\in \bK\times \bL^2(\Omega) \,,
\end{equation}
where $\wh{\bF}\in \bH^{-1}(\Omega)\times \{\0\}$ is 
a functional given by $\wh{\bF}(\ubv) := (\wh{\f},\bv)_\Omega$ for some $\wh{\f}\in \bH^{-1}(\Omega)$.
In the following two sections we prove that \eqref{eq:reduced-problem} 
has a solution by employing a suitable fixed-point approach.

\subsubsection{A fixed-point strategy}

Let us define the operator $\cJ : \bK \to \bK$ by
\begin{equation}\label{eq:operator-J}
\cJ(\bz) := \bu \quad \forall\, \bz\in \bK\,,
\end{equation}
where $\bu$ is the first component of the solution of the partially linearized version of problem \eqref{eq:reduced-problem}:
Find $\ubu=(\bu,\bomega)\in \bK\times \bL^2(\Omega)$ such that
\begin{equation}\label{eq:auxiliary-reduced-problem}
[(\cE + \cA(\bz))(\ubu),\ubv] \,=\, [\wh{\bF},\ubv] \quad \forall\,\ubv\in \bK\times \bL^2(\Omega) \,.
\end{equation}

It is clear that $\ubu=(\bu,\bomega)\in \bK\times \bL^2(\Omega)$ is a solution of problem \eqref{eq:reduced-problem} if and only if $\cJ(\bu) = \bu$.
In this way, to establish 
existence of solution of \eqref{eq:reduced-problem}
it suffices to prove that $\cJ$ has a fixed-point in $\bK$.

Before proceeding with the solvability analysis of \eqref{eq:reduced-problem}, we first establish
the well-definiteness of the fixed-point operator $\cJ$.
To that end, in what follows we prove the hypothesis of the Minty--Browder theorem (cf. Theorem \ref{thm:Browder-Minty-theorem}) applied to the problem \eqref{eq:auxiliary-reduced-problem}.
We begin by observing that, thanks to the reflexivity and separability of $\L^{\rp}(\Omega)$
for $\rp\in (1,+\infty)$, it follows that $\bH^1_0(\Omega), \bL^2(\Omega)$, and $\L^2_0(\Omega)$
are reflexive and separable as well.

We continue by establishing a continuity bound of the nonlinear operator $\cE + \cA(\bz)$.
\begin{lem}\label{lem:continuity-Az}
Let $\bz\in \bK$.
Then, there exists $L_{\tKV} > 0$, depending on $\tD, \tF, \nu, \kappa, \rho, \|\bi_\rho\|, \|\bi_4\|$ and $|\Omega|$, such that
\begin{equation}\label{eq:continuity-of-Az}
\begin{array}{l}
\ds \|(\cE + \cA(\bz))(\ubu) - (\cE + \cA(\bz))(\ubv)\| \\ [2ex]
\ds\quad\,\leq\, L_{\tKV}\,\Big\{ \Big( 1 + \|\bz\|_{\bH^1(\Omega)} + \|\bu\|^{\rho-2}_{\bH^1(\Omega)} + \|\bv\|^{\rho-2}_{\bH^1(\Omega)} \Big)\,\|\bu - \bv\|_{\bH^1(\Omega)} 
+ \|\bomega - \bpsi\|_{\bL^2(\Omega)} \Big\} \,,
\end{array}
\end{equation}
for all $\ubu=(\bu,\bomega), \ubv=(\bv,\bpsi)\in \bK\times \bL^2(\Omega)$.
\end{lem}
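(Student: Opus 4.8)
The plan is to estimate the dual norm on the left-hand side of \eqref{eq:continuity-of-Az} by testing against an arbitrary $\ubw = (\bw,\bchi)\in \bK\times \bL^2(\Omega)$ and taking the supremum over $\|\ubw\|\le 1$. First I would use the linearity of $\cE$ and of $\bc(\bz)$, together with the splitting $\cA(\bz) = \ba + \bc(\bz)$ (cf. \eqref{eq:operator-Au}), to write
\begin{equation*}
[(\cE+\cA(\bz))(\ubu) - (\cE+\cA(\bz))(\ubv),\ubw]
= [\cE(\ubu-\ubv),\ubw] + [\ba(\ubu)-\ba(\ubv),\ubw] + [\bc(\bz)(\ubu-\ubv),\ubw],
\end{equation*}
and then expand $[\ba(\ubu)-\ba(\ubv),\ubw]$ according to \eqref{eq:operator-a} into its five constituent terms, isolating the genuinely nonlinear Forchheimer contribution $\tF(|\bu|^{\rho-2}\bu - |\bv|^{\rho-2}\bv,\bw)_\Omega$ from the remaining linear ones.

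The routine terms I would dispatch directly. The term $[\cE(\ubu-\ubv),\ubw]$ is controlled by $\max\{1,\kappa^2\}\,\|\bu-\bv\|_{\bH^1(\Omega)}\,\|\bw\|_{\bH^1(\Omega)}$ via \eqref{eq:continuity-monotonicity-E} (note that $\cE$ only sees the velocity component). The Darcy term $\tD\,(\bu-\bv,\bw)_\Omega$ and the two vorticity--curl couplings $\nu\,(\bomega-\bpsi,\bcurl(\bw))_\Omega$ and $-\nu\,(\bchi,\bcurl(\bu-\bv))_\Omega$ are bounded by Cauchy--Schwarz together with $\|\bcurl(\cdot)\|_{\bL^2(\Omega)}\le \sqrt{2}\,\|\cdot\|_{\bH^1(\Omega)}$, producing contributions of the form $\|\bu-\bv\|_{\bH^1(\Omega)}\,\|\ubw\|$ and $\|\bomega-\bpsi\|_{\bL^2(\Omega)}\,\|\ubw\|$; the mass term $\nu\,(\bomega-\bpsi,\bchi)_\Omega$ likewise yields a $\|\bomega-\bpsi\|_{\bL^2(\Omega)}\,\|\ubw\|$ contribution. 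Finally, the convective difference is handled by \eqref{eq:continuity-cz-u}, which gives $|[\bc(\bz)(\ubu-\ubv),\ubw]| \le \|\bi_4\|^2\,\|\bz\|_{\bH^1(\Omega)}\,\|\bu-\bv\|_{\bH^1(\Omega)}\,\|\ubw\|$. Together these account for the $(1+\|\bz\|_{\bH^1(\Omega)})\,\|\bu-\bv\|_{\bH^1(\Omega)}$ and $\|\bomega-\bpsi\|_{\bL^2(\Omega)}$ parts of the claimed bound.

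The main obstacle is the Forchheimer term, where the exponents $\rho-2$ enter. I would rely on the elementary pointwise inequality, valid for $\rho\ge 2$,
\begin{equation*}
\big| |\bx|^{\rho-2}\bx - |\by|^{\rho-2}\by \big| \,\le\, C_\rho\,\big(|\bx|^{\rho-2} + |\by|^{\rho-2}\big)\,|\bx - \by| \qquad \forall\,\bx,\by\in \R^d,
\end{equation*}
to obtain, after multiplying by $|\bw|$ and integrating,
\begin{equation*}
\big| (|\bu|^{\rho-2}\bu - |\bv|^{\rho-2}\bv,\bw)_\Omega \big|
\,\le\, C_\rho \int_\Omega \big(|\bu|^{\rho-2} + |\bv|^{\rho-2}\big)\,|\bu-\bv|\,|\bw|.
\end{equation*}
Applying H\"older's inequality with the three exponents $\big(\tfrac{\rho}{\rho-2},\rho,\rho\big)$, whose reciprocals sum to one precisely because $\rho\in[3,4]$, and then the Sobolev injection \eqref{eq:Sobolev-inequality}, this is bounded by
\begin{equation*}
C_\rho\,\|\bi_\rho\|^\rho\,\big(\|\bu\|^{\rho-2}_{\bH^1(\Omega)} + \|\bv\|^{\rho-2}_{\bH^1(\Omega)}\big)\,\|\bu-\bv\|_{\bH^1(\Omega)}\,\|\bw\|_{\bH^1(\Omega)},
\end{equation*}
which supplies exactly the $\big(\|\bu\|^{\rho-2}_{\bH^1(\Omega)} + \|\bv\|^{\rho-2}_{\bH^1(\Omega)}\big)\,\|\bu-\bv\|_{\bH^1(\Omega)}$ part of the bound. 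Collecting all constants into a single $L_{\tKV}$ with the stated dependence on $\tD,\tF,\nu,\kappa,\rho,\|\bi_\rho\|,\|\bi_4\|$ and $|\Omega|$, and taking the supremum over $\|\ubw\|\le 1$, yields \eqref{eq:continuity-of-Az}.
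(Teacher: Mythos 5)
Your proposal is correct and follows essentially the same route as the paper: split off the linear terms (bounded by Cauchy--Schwarz and the continuity of $\cE$ and $\bc(\bz)$, cf. \eqref{eq:continuity-monotonicity-E} and \eqref{eq:continuity-cz-u}) and isolate the Forchheimer difference, which is the only genuinely nonlinear contribution. The sole cosmetic difference is that you derive the bound on $(|\bu|^{\rho-2}\bu-|\bv|^{\rho-2}\bv,\bw)_\Omega$ from the elementary pointwise inequality plus a three-exponent H\"older argument, whereas the paper cites \cite[Lemma 2.1, eq.\ (2.1a)]{bl1993} for precisely that estimate (your version even avoids the factor $2^{\rho-3}$), so the two arguments are interchangeable.
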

\begin{proof}
Let $\bz\in \bK$ and let $\ubu=(\bu,\bomega), \ubv=(\bv,\bpsi), \ubw=(\bw,\bphi)\in \bK\times \bL^2(\Omega)$.
From the definition of the operators $\cE, \cA(\bz)$ (cf. \eqref{eq:operator-E}, \eqref{eq:operator-Au}), 
using the continuity bounds \eqref{eq:continuity-monotonicity-E} and \eqref{eq:continuity-cz-u}, 
and the H\"older and Cauchy--Schwarz inequalities, we deduce that
\begin{equation}\label{eq:continuity-bound-1}
\begin{array}{l}
[(\cE + \cA(\bz))(\ubu) - (\cE + \cA(\bz))(\ubv),\ubw] 
\,\leq\, \tF\,\big\| |\bu|^{\rho-2}\bu - |\bv|^{\rho-2}\bv \big\|_{\bL^\upsilon(\Omega)}\,\|\bw\|_{\bL^\rho(\Omega)} \\[2ex] 
\ds\quad +\, 2\,\max\big\{ 1+\tD, \kappa^2,\nu \big\}\,\|\ubu - \ubv\|\,\|\ubw\| 
+ \|\bi_4\|^2\,\|\bz\|_{\bH^1(\Omega)}\,\|\bu - \bv\|_{\bH^1(\Omega)}\,\|\ubw\| \,,
\end{array}
\end{equation}
with $\upsilon\in [4/3,3/2]$ and $1/\rho + 1/\upsilon=1$.
In turn, using \cite[Lemma 2.1, eq. (2.1a)]{bl1993} and the continuous injection $\bi_\rho$ of $\bH^1(\Omega)$ into $\bL^\rho(\Omega)$ (cf. \eqref{eq:Sobolev-inequality}), we deduce that there exists a constant $c_\rho>0$ depending only on $|\Omega|$ and $\rho$, such that
\begin{equation}\label{eq:continuity-bound-2}
\begin{array}{l}
\ds \big\| |\bu|^{\rho-2}\bu - |\bv|^{\rho-2}\bv \big\|_{\bL^\upsilon(\Omega)} \|\bw\|_{\bL^\rho(\Omega)} 
\,\leq\, c_\rho\,\big( \|\bu\|_{\bL^\rho(\Omega)} + \|\bv\|_{\bL^\rho(\Omega)} \big)^{\rho-2}\,\|\bu - \bv\|_{\bL^\rho(\Omega)}\,\|\bw\|_{\bL^\rho(\Omega)}  \\[2ex]
\ds\quad \,\leq\, 2^{\rho-3}\,c_\rho\,\|\bi_\rho\|^\rho\,\big( \|\bu\|^{\rho-2}_{\bH^1(\Omega)} + \|\bv\|^{\rho-2}_{\bH^1(\Omega)} \big)\,\|\bu - \bv\|_{\bH^1(\Omega)}\,\|\bw\|_{\bH^1(\Omega)} \,.
\end{array}
\end{equation}
Then, replacing back \eqref{eq:continuity-bound-2} into \eqref{eq:continuity-bound-1},
and after simple computations, we obtain \eqref{eq:continuity-of-Az} with 
$$L_{\tKV} = \max\Big\{ 2\,\max\{ 1+\tD, \kappa^2, \nu \}, \|\bi_4\|^2, 2^{\rho-3}\,\tF\,\|\bi_\rho\|^\rho\,c_\rho \Big\}.$$
\end{proof}

We continue our analysis by proving the coercivity and strong monotonicity of the nonlinear operator $\cE + \cA(\bz)$ (cf. \eqref{eq:operator-E}, \eqref{eq:operator-Au}).
\begin{lem}\label{lem:coercivity-monotonicity-Az}
Let $\bz\in \bK$ (cf. \eqref{eq:def-K}).
Then, there exists $\gamma_{\tKV}>0$, depending only on $\tD$ and $\kappa$, such that
\begin{equation}\label{eq:coercivity-of-EAz}
[(\cE +\cA(\bz))(\ubv),\ubv] 
\,\geq\, \gamma_{\tKV}\,\|\bv\|^2_{\bH^1(\Omega)} + \nu\,\|\bpsi\|^2_{\bL^2(\Omega)},
\end{equation}
and
\begin{equation}\label{eq:strong-monotonicity-of-EAz}
[(\cE + \cA(\bz))(\ubu) - (\cE + \cA(\bz))(\ubv), \ubu - \ubv] 
\,\geq\, \gamma_{\tKV}\,\|\bu - \bv\|^2_{\bH^1(\Omega)} + \nu\,\|\bomega - \bpsi\|^2_{\bL^2(\Omega)} \,,
\end{equation}
for all $\ubu=(\bu,\bomega), \ubv=(\bv,\bpsi)\in \bK\times \bL^2(\Omega)$.
\end{lem}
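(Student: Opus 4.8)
The plan is to exploit the block structure of $\cE + \cA(\bz)$ by isolating its genuinely nonlinear Forchheimer contribution from the remaining linear terms, treating each piece by a cancellation or a monotonicity argument. Recalling \eqref{eq:operator-Au}--\eqref{eq:operator-cz}, I decompose the operator as $\cE + \ba_{\mathrm{lin}} + \ba_{\mathrm{F}} + \bc(\bz)$, where $\ba_{\mathrm{lin}}$ collects the bilinear Darcy, vorticity-mass, and curl-coupling terms of \eqref{eq:operator-a}, and $[\ba_{\mathrm{F}}(\ubu),\ubv] := \tF\,(|\bu|^{\rho-2}\bu,\bv)_\Omega$ is the only nonlinear term. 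Three structural facts drive the argument: (i) $\cE$, $\ba_{\mathrm{lin}}$ and $\bc(\bz)$ are linear, so their contributions to any difference $[(\cdot)(\ubu) - (\cdot)(\ubv),\ubu-\ubv]$ collapse to evaluation at $\ubw := \ubu - \ubv$; (ii) the two curl-coupling terms in $\ba_{\mathrm{lin}}$ are mutually antisymmetric and cancel identically when tested against the same argument; and (iii) since $\bz\in\bK$, the null identity \eqref{eq:null-identity-cz} forces $[\bc(\bz)(\ubw),\ubw] = 0$.

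For the coercivity bound \eqref{eq:coercivity-of-EAz}, I evaluate $[(\cE + \cA(\bz))(\ubv),\ubv]$ directly. By (ii) the curl terms drop out, by (iii) the convective term vanishes, and the Forchheimer term equals $\tF\,\|\bv\|^\rho_{\bL^\rho(\Omega)}\geq 0$, so it may simply be discarded. What remains is
\[
(1+\tD)\,\|\bv\|^2_{\bL^2(\Omega)} + \kappa^2\,\|\nabla\bv\|^2_{\bbL^2(\Omega)} + \nu\,\|\bpsi\|^2_{\bL^2(\Omega)},
\]
and choosing $\gamma_{\tKV} := \min\{1+\tD,\kappa^2\}$, which depends only on $\tD$ and $\kappa$, yields the claim after recalling $\|\bv\|^2_{\bH^1(\Omega)} = \|\bv\|^2_{\bL^2(\Omega)} + \|\nabla\bv\|^2_{\bbL^2(\Omega)}$.

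For the strong monotonicity \eqref{eq:strong-monotonicity-of-EAz}, I write the difference as $[(\cE + \ba_{\mathrm{lin}})(\ubw),\ubw] + [\ba_{\mathrm{F}}(\ubu) - \ba_{\mathrm{F}}(\ubv),\ubu-\ubv] + [\bc(\bz)(\ubw),\ubw]$, using linearity for the first and third summands. The first summand is exactly the coercivity computation applied to $\ubw=(\bw,\bphi)$, giving $\gamma_{\tKV}\,\|\bw\|^2_{\bH^1(\Omega)} + \nu\,\|\bphi\|^2_{\bL^2(\Omega)}$; the third is zero by (iii). It then remains only to show that the Forchheimer difference $\tF\,(|\bu|^{\rho-2}\bu - |\bv|^{\rho-2}\bv,\bu-\bv)_\Omega$ is nonnegative.

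That last inequality is the sole substantive point. For the exponent range $\rho\in[3,4]\subset[2,\infty)$ the integrand is pointwise nonnegative by the classical monotonicity of the map $\bs\mapsto|\bs|^{\rho-2}\bs$; concretely one invokes the vector inequality $(|\bs|^{\rho-2}\bs - |\bt|^{\rho-2}\bt)\cdot(\bs-\bt)\geq 0$ of the type recorded in \cite[Lemma 2.1]{bl1993}. Hence the Forchheimer term only helps the estimate and may be dropped, completing \eqref{eq:strong-monotonicity-of-EAz} with the same $\gamma_{\tKV}$. The main obstacle is thus conceptual rather than computational: everything hinges on correctly combining the antisymmetry cancellation of the curl coupling, the kernel-induced vanishing of the convective term, and the monotonicity of the Forchheimer nonlinearity, after which both bounds follow with no smallness assumption on the data.
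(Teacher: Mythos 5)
Your proof is correct and follows essentially the same route as the paper: both arguments reduce to the cancellation of the antisymmetric curl coupling, the vanishing of the convective term via \eqref{eq:null-identity-cz} for $\bz\in\bK$, and the nonnegativity (indeed strict monotonicity) of the Forchheimer term from \cite[Lemma 2.1]{bl1993}, yielding the same constant $\gamma_{\tKV}=\min\{1+\tD,\kappa^2\}$. The only cosmetic difference is that you make the linear/nonlinear splitting of $\cA(\bz)$ explicit, whereas the paper carries out the computation in one display.
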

\begin{proof}
Let $\bz\in \bK$ and let $\ubu=(\bu,\bomega), \ubv=(\bv,\bpsi)\in \bK\times \bL^2(\Omega)$.
Then, from the definition of the operators $\cE, \cA(\bz)$ (cf. \eqref{eq:operator-E}, \eqref{eq:operator-Au}) and the identity \eqref{eq:null-identity-cz}, we deduce that
\begin{equation}\label{eq:coercivity-1}
\begin{array}{l}
\ds [(\cE +\cA(\bz))(\ubv),\ubv]
\,=\, [\cE(\ubv),\ubv] + [\ba(\ubv),\ubv] + [\bc(\bz)(\ubv),\ubv] \\[2ex]
\ds\quad =\, (1 + \tD)\,\|\bv\|^2_{\bL^2(\Omega)} 
+ \kappa^2\,\|\nabla\bv\|^2_{\bbL^2(\Omega)} + \tF\,\|\bv\|^\rho_{\bL^\rho(\Omega)} 
+ \nu\,\|\bpsi\|^2_{\bL^2(\Omega)} \,,
\end{array}
\end{equation}	
which, together with the fact that the $\bL^\rho(\Omega)$-term on the right hand-side of \eqref{eq:coercivity-1}
can be neglected, yields \eqref{eq:coercivity-of-EAz} with $\gamma_{\tKV}:=\min \{ 1+\tD, \kappa^2\}$.

On the other hand, proceeding as in \eqref{eq:coercivity-1} and using the fact that $\cE$ and $\bc(\bz)$ are linear, we get
\begin{equation}\label{eq:strong-monotonicity-1}
\begin{array}{l}
\ds [(\cE + \cA(\bz))(\ubu) - (\cE + \cA(\bz))(\ubv), \ubu - \ubv]
\,=\, (1 + \tD)\,\|\bu - \bv\|^2_{\bL^2(\Omega)} 
+ \kappa^2\,\|\nabla(\bu - \bv)\|^2_{\bbL^2(\Omega)} \\[2ex] 
\ds\quad  +\,\, \tF\,(|\bu|^{\rho-2}\bu - |\bv|^{\rho-2}\bv,\bu - \bv)_\Omega 
+ \nu\,\|\bomega - \bpsi\|^2_{\bL^2(\Omega)} \,.
\end{array}
\end{equation}
Thanks to \cite[Lemma 2.1, eq. (2.1b)]{bl1993}, we know that there exists a constant $C_{\rho}>0$ depending only on $|\Omega|$ and $\rho$, such that
\begin{equation}\label{eq:strict-monotonicity-BF-term}
\big( |\bu|^{\rho-2}\bu - |\bv|^{\rho-2}\bv,\bu - \bv \big)_{\Omega} 
\,\geq\, C_\rho\,\|\bu - \bv\|^\rho_{\bL^{\rho}(\Omega)} 
\,>\, 0 \quad \forall\,\bu,\bv\in \bL^\rho(\Omega) \,.
\end{equation}
Thus, \eqref{eq:strong-monotonicity-1} yields \eqref{eq:strong-monotonicity-of-EAz} 
with the same constant $\gamma_{\tKV}$ as in \eqref{eq:coercivity-of-EAz}. 
\end{proof}

\begin{lem}\label{J-well-defined}
The operator $\cJ : \bK \to \bK$ introduced in \eqref{eq:operator-J} 
is well defined. In particular, for each $\bz \in \bK$, there exists a unique solution $\ubu=(\bu,\bomega)\in \bK\times \bL^2(\Omega)$ to \eqref{eq:auxiliary-reduced-problem} and $\cJ(\bz) = \bu$. Moreover,
\begin{equation}\label{eq:u-omega-bound-f}
\|\bu\|_{\bH^1(\Omega)} \,\leq\, \frac{1}{\gamma_{\tKV}}\|\wh{\f}\|_{\bH^{-1}(\Omega)}\,. 
\end{equation}
\end{lem}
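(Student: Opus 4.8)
The plan is to apply the Browder--Minty theorem (cf. Theorem~\ref{thm:Browder-Minty-theorem}) to the nonlinear operator $\cE + \cA(\bz)$ on the space $\V := \bK\times \bL^2(\Omega)$. First I would check that $\V$ is a real separable reflexive Banach space: $\bK$ is the kernel of the bounded divergence operator and hence a closed subspace of the reflexive separable space $\bH^1_0(\Omega)$, so it inherits both properties, and taking the Cartesian product with $\bL^2(\Omega)$ preserves them. Moreover, since $\wh{\f}\in \bH^{-1}(\Omega)$, the functional $\wh{\bF}$ defined by $[\wh{\bF},\ubv] = (\wh{\f},\bv)_\Omega$ belongs to $\V'$ (as in \eqref{eq:continuity-F}), so that \eqref{eq:auxiliary-reduced-problem} is precisely the operator equation $(\cE + \cA(\bz))(\ubu) = \wh{\bF}$ posed in $\V'$.

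Next I would verify the three hypotheses of Theorem~\ref{thm:Browder-Minty-theorem}. Coercivity follows directly from \eqref{eq:coercivity-of-EAz}, since
\[
[(\cE + \cA(\bz))(\ubv),\ubv] \,\geq\, \gamma_{\tKV}\,\|\bv\|^2_{\bH^1(\Omega)} + \nu\,\|\bpsi\|^2_{\bL^2(\Omega)} \,\geq\, \min\{\gamma_{\tKV},\nu\}\,\|\ubv\|^2,
\]
whence $[(\cE + \cA(\bz))(\ubv),\ubv]/\|\ubv\| \to +\infty$ as $\|\ubv\|\to +\infty$. Strict monotonicity (indeed strong monotonicity) is exactly \eqref{eq:strong-monotonicity-of-EAz}. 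Finally, hemicontinuity follows from the Lipschitz-type continuity bound \eqref{eq:continuity-of-Az} of Lemma~\ref{lem:continuity-Az}, which shows that $\cE + \cA(\bz)$ is strongly continuous from $\V$ into $\V'$; in particular, for fixed $\ubu,\ubv\in \V$ the increment $(\cE + \cA(\bz))(\ubu + t\,\ubv) - (\cE + \cA(\bz))(\ubu + s\,\ubv)$ is controlled by a quantity tending to zero as $s\to t$, so $t\mapsto [(\cE + \cA(\bz))(\ubu + t\,\ubv),\ubw]$ is continuous, and strong continuity implies hemicontinuity. Applying Theorem~\ref{thm:Browder-Minty-theorem} then yields a unique $\ubu=(\bu,\bomega)\in \V$ solving \eqref{eq:auxiliary-reduced-problem}; since by construction $\bu\in \bK$, the assignment $\cJ(\bz) := \bu$ is a well-defined map from $\bK$ into $\bK$.

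To obtain the a priori bound \eqref{eq:u-omega-bound-f}, I would test \eqref{eq:auxiliary-reduced-problem} with $\ubv = \ubu$ and combine the coercivity estimate \eqref{eq:coercivity-of-EAz} (discarding the nonnegative $\nu\,\|\bomega\|^2_{\bL^2(\Omega)}$ term) with the continuity of $\wh{\bF}$:
\[
\gamma_{\tKV}\,\|\bu\|^2_{\bH^1(\Omega)} \,\leq\, [(\cE + \cA(\bz))(\ubu),\ubu] \,=\, (\wh{\f},\bu)_\Omega \,\leq\, \|\wh{\f}\|_{\bH^{-1}(\Omega)}\,\|\bu\|_{\bH^1(\Omega)}.
\]
Dividing by $\|\bu\|_{\bH^1(\Omega)}$, with the case $\bu=\0$ being trivial, gives exactly \eqref{eq:u-omega-bound-f}.

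I do not anticipate a genuine obstacle, since the three structural properties required by Browder--Minty have already been isolated in Lemmas~\ref{lem:continuity-Az} and~\ref{lem:coercivity-monotonicity-Az}. The only points demanding a little care are deducing the pointwise directional-continuity condition (hemicontinuity) from the global Lipschitz-type bound, and observing that the first component of the unique solution automatically lies in $\bK$, so that $\cJ$ indeed has the claimed codomain $\bK$.
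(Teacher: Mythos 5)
Your proposal is correct and follows essentially the same route as the paper: well-posedness of \eqref{eq:auxiliary-reduced-problem} via the Browder--Minty theorem using the continuity, coercivity, and strong monotonicity established in Lemmas~\ref{lem:continuity-Az} and~\ref{lem:coercivity-monotonicity-Az}, and the bound \eqref{eq:u-omega-bound-f} by testing with $\ubv=\ubu$ and combining \eqref{eq:coercivity-of-EAz} with the continuity of $\wh{\bF}$. The extra details you supply (reflexivity/separability of $\bK\times\bL^2(\Omega)$ and deducing hemicontinuity from the Lipschitz-type bound) are points the paper leaves implicit, and they are handled correctly.
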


\begin{proof}
Let $\bz\in \bK$.
Owing to the continuity, coercivity and strong monotonicity of the operator $\cE+\cA(\bz)$ (cf. Lemmas~\ref{lem:continuity-Az} and \ref{lem:coercivity-monotonicity-Az}), the well-posedness of \eqref{eq:auxiliary-reduced-problem} is a direct consequence of the Minty--Browder theorem (cf. Theorem~\ref{thm:Browder-Minty-theorem}). This is clearly equivalent to the existence of a unique $\bu\in \bK$, such that $\cJ(\bz) = \bu$. Moreover, \eqref{eq:u-omega-bound-f} follows readily by testing \eqref{eq:auxiliary-reduced-problem} with $\ubv = \ubu$ and using the coercivity bound of $\cE + \cA(\bz)$ (cf. \eqref{eq:coercivity-of-EAz}) and the continuity bound of $\wh{\bF}$ (cf. \eqref{eq:continuity-F}).
\end{proof}

We next derive a continuity bound for the operator $\cJ$.

\begin{lem}
For all $\bz_1,\bz_2\in \bK$, there holds
\begin{equation}\label{eq:aux-continuity-T}
\|\cJ(\bz_1) - \cJ(\bz_2)\|_{\bH^1(\Omega)} 
\,\leq\, \frac{\|\bi_4\|}{\gamma^2_{\tKV}}\,\|\wh{\f}\|_{\bH^{-1}(\Omega)}\,\|\bz_1 - \bz_2\|_{\bL^4(\Omega)} \,.
\end{equation}
\end{lem}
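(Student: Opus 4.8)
The plan is to compare the two states produced by $\cJ$ and to exploit the strong monotonicity established in Lemma~\ref{lem:coercivity-monotonicity-Az} together with the a priori bound \eqref{eq:u-omega-bound-f}. Fix $\bz_1, \bz_2\in \bK$ and let $\ubu_1 = (\bu_1,\bomega_1)$ and $\ubu_2 = (\bu_2,\bomega_2)$ be the corresponding unique solutions of \eqref{eq:auxiliary-reduced-problem}, so that $\cJ(\bz_i) = \bu_i$. Since both solve \eqref{eq:auxiliary-reduced-problem} with the same right-hand side $\wh{\bF}$, subtracting the two identities gives, for every $\ubv\in \bK\times \bL^2(\Omega)$,
\[
[(\cE + \cA(\bz_1))(\ubu_1) - (\cE + \cA(\bz_2))(\ubu_2), \ubv] \,=\, 0 \,.
\]

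First I would recast the convective contribution so that the strongly monotone operator acts on the single argument $\ubu_1 - \ubu_2$. Using $\cA(\bz) = \ba + \bc(\bz)$ (cf.~\eqref{eq:operator-Au}), the linearity of $\bc(\bz)$ in its argument, and the linearity of $\bc(\cdot)$ in the convecting field, one has $(\cE + \cA(\bz_2))(\ubu_2) = (\cE + \cA(\bz_1))(\ubu_2) - \bc(\bz_1 - \bz_2)(\ubu_2)$, whence the difference identity becomes
\[
[(\cE + \cA(\bz_1))(\ubu_1) - (\cE + \cA(\bz_1))(\ubu_2), \ubv] \,=\, -\,[\bc(\bz_1 - \bz_2)(\ubu_2), \ubv] \,.
\]
This rewriting is the crux: it isolates a term to which \eqref{eq:strong-monotonicity-of-EAz} applies, while collecting the entire mismatch between $\bz_1$ and $\bz_2$ into the single cross term $\bc(\bz_1 - \bz_2)(\ubu_2)$.

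Next I would test this identity with $\ubv = \ubu_1 - \ubu_2$. The strong monotonicity bound \eqref{eq:strong-monotonicity-of-EAz}, after discarding the nonnegative vorticity contribution $\nu\,\|\bomega_1 - \bomega_2\|^2_{\bL^2(\Omega)}$, bounds the left-hand side below by $\gamma_{\tKV}\,\|\bu_1 - \bu_2\|^2_{\bH^1(\Omega)}$, while the right-hand side is controlled by the continuity estimate \eqref{eq:continuity-cz-z} for $\bc$ together with the Sobolev injection, giving $\|\bi_4\|\,\|\bz_1 - \bz_2\|_{\bL^4(\Omega)}\,\|\bu_2\|_{\bH^1(\Omega)}\,\|\bu_1 - \bu_2\|_{\bH^1(\Omega)}$. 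Cancelling one factor of $\|\bu_1 - \bu_2\|_{\bH^1(\Omega)}$ (the claimed bound being trivial otherwise) yields $\gamma_{\tKV}\,\|\bu_1 - \bu_2\|_{\bH^1(\Omega)} \leq \|\bi_4\|\,\|\bz_1 - \bz_2\|_{\bL^4(\Omega)}\,\|\bu_2\|_{\bH^1(\Omega)}$. Finally I would insert the a priori estimate \eqref{eq:u-omega-bound-f} applied to $\ubu_2$, namely $\|\bu_2\|_{\bH^1(\Omega)} \leq \gamma_{\tKV}^{-1}\,\|\wh{\f}\|_{\bH^{-1}(\Omega)}$, and divide by $\gamma_{\tKV}$, which produces exactly \eqref{eq:aux-continuity-T}.

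As for the main obstacle, there is no delicate analytic estimate at play: the only genuinely nontrivial step is the algebraic decomposition of $\bc(\bz_1)(\ubu_1) - \bc(\bz_2)(\ubu_2)$ into the part that merges into $(\cE + \cA(\bz_1))$ evaluated at $\ubu_1 - \ubu_2$ and the cross term $\bc(\bz_1 - \bz_2)(\ubu_2)$. Once this bilinearity of $\bc$ is exploited correctly, the argument reduces to invoking monotonicity, the continuity of $\bc$, and the uniform bound on $\cJ$, all of which are already available.
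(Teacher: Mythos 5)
Your proposal is correct and follows essentially the same route as the paper: subtract the two instances of \eqref{eq:auxiliary-reduced-problem}, use the bilinearity of $\bc$ to rewrite the difference as $[(\cE + \cA(\bz_1))(\ubu_1) - (\cE + \cA(\bz_1))(\ubu_2),\ubv] = -[\bc(\bz_1 - \bz_2)(\ubu_2),\ubv]$, test with $\ubu_1 - \ubu_2$, and combine the strong monotonicity \eqref{eq:strong-monotonicity-of-EAz} with the continuity bound \eqref{eq:continuity-cz-z} and the a priori estimate \eqref{eq:u-omega-bound-f}. No gaps.
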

\begin{proof}
Given $\bz_1, \bz_2\in \bK$, we let $\bu_1 = \cJ(\bz_1)$ and $\bu_2 = \cJ(\bz_2)$.
According to the definition of $\cJ$ (cf. \eqref{eq:operator-J}--\eqref{eq:auxiliary-reduced-problem}), it follows that
\begin{equation*}
[(\cE + \cA(\bz_1))(\ubu_1) - (\cE + \cA(\bz_2))(\ubu_2),\ubv] \,=\, 0 \quad \forall\,\ubv\in \bK\times \bL^2(\Omega).
\end{equation*}
Taking $\ubv=\ubu_1 - \ubu_2$ in the above system, and recalling the definition of $\cE, \cA(\bz)$ (cf. \eqref{eq:operator-E}, \eqref{eq:operator-Au}), we obtain the identity
\begin{equation}
[(\cE + \cA(\bz_1))(\ubu_1) - (\cE + \cA(\bz_1))(\ubu_2),\ubu_1 - \ubu_2] 
\,=\, -[\bc(\bz_1 - \bz_2)(\ubu_2),\ubu_1 - \ubu_2] \,.
\end{equation}
Hence, using the strong monotonicity of $\cE + \cA(\bz)$ (cf. \eqref{eq:strong-monotonicity-of-EAz}) 
and the continuity bound of $\bc$ (cf. \eqref{eq:continuity-cz-z}), we deduce that
\begin{equation*}
\gamma_{\tKV}\,\|\bu_1 - \bu_2\|^2_{\bH^1(\Omega)}
\,\leq\, \|\bi_4\|\,\|\bu_2\|_{\bH^1(\Omega)}\,\|\bz_1 - \bz_2\|_{\bL^4(\Omega)}\,\|\bu_1 - \bu_2\|_{\bH^1(\Omega)} \,,
\end{equation*}
which, together with \eqref{eq:u-omega-bound-f},
implies \eqref{eq:aux-continuity-T}.
\end{proof}

\subsubsection{Solvability analysis of the fixed-point equation}

Having proved the well-posedness of the problem \eqref{eq:auxiliary-reduced-problem}, which ensures that the operator $\cJ$ is well defined, we now aim to establish existence of a fixed point of the operator $\cJ$.
For this purpose, in what follows we verify the hypothesis of the Schauder fixed-point theorem in a suitable closed set.

Let $\bW$ be the bounded and convex set defined by
\begin{equation}\label{eq:ball-W}
\bW := \Big\{ \bz\in \bK :\quad \|\bz\|_{\bH^1(\Omega)} 
\,\leq\, \frac{1}{\gamma_{\tKV}}\|\wh{\f}\|_{\bH^{-1}(\Omega)} \Big\} \,.
\end{equation}
The following lemma establishes the existence of a fixed point of $\cJ$ 
by means of the Schauder fixed point theorem.

\begin{lem}\label{lem:resolvent-system}
Let $\bW$ be defined as in \eqref{eq:ball-W}. Then the operator $\cJ$ has at least one fixed-point in $\bW$, that is, the resolvent system \eqref{eq:reduced-problem} has a solution $\ubu=(\bu,\bomega)\in \bW\times \bL^2(\Omega)$.
\end{lem}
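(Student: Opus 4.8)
The plan is to apply the Schauder fixed point theorem to the self-map $\cJ:\bW\to\bW$, so I must verify that $\bW$ is nonempty, closed, bounded, and convex, that $\cJ(\bW)\subseteq\bW$, and that $\cJ$ is continuous and compact. First I would dispose of the geometric properties of $\bW$ from \eqref{eq:ball-W}: it is nonempty since $\0\in\bW$; it is convex and bounded, being a ball in the $\bH^1(\Omega)$-norm intersected with the linear space $\bK$; and it is closed in $\bH^1_0(\Omega)$ because $\bK$ is the kernel of the continuous divergence operator and is therefore a closed subspace, so a closed $\bH^1(\Omega)$-ball inside it remains closed.

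Next I would check that $\cJ$ maps $\bW$ into itself, which is immediate from Lemma~\ref{J-well-defined}: for each $\bz\in\bW$ the image $\cJ(\bz)=\bu\in\bK$ obeys the a priori bound \eqref{eq:u-omega-bound-f}, that is $\|\bu\|_{\bH^1(\Omega)}\leq\gamma_{\tKV}^{-1}\,\|\wh{\f}\|_{\bH^{-1}(\Omega)}$, which is exactly the defining inequality of $\bW$. Hence $\cJ(\bW)\subseteq\bW$ (indeed $\cJ(\bK)\subseteq\bW$).

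The crux is the continuity and compactness of $\cJ$, and here the continuity bound \eqref{eq:aux-continuity-T} is the decisive tool. Because its right-hand side controls $\|\cJ(\bz_1)-\cJ(\bz_2)\|_{\bH^1(\Omega)}$ by $\|\bz_1-\bz_2\|_{\bL^4(\Omega)}$, combining it with the continuous injection $\bi_4$ of $\bH^1(\Omega)$ into $\bL^4(\Omega)$ (cf. \eqref{eq:Sobolev-inequality}) shows that $\cJ$ is Lipschitz continuous for the $\bH^1(\Omega)$-topology. For compactness I would take an arbitrary sequence $\{\bz_n\}\subset\bW$; since $\bW$ is bounded in $\bH^1(\Omega)$ and the embedding $\bH^1(\Omega)\hookrightarrow\bL^4(\Omega)$ is compact (Rellich--Kondrachov, valid for $d\in\{2,3\}$ as $4<6$ when $d=3$), a subsequence $\{\bz_{n_k}\}$ is Cauchy in $\bL^4(\Omega)$. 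Feeding this subsequence into \eqref{eq:aux-continuity-T} shows that $\{\cJ(\bz_{n_k})\}$ is Cauchy, hence convergent, in $\bH^1(\Omega)$, so $\cJ(\bW)$ is relatively compact in $\bH^1(\Omega)$.

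With $\bW$ nonempty, closed, bounded, and convex, and $\cJ:\bW\to\bW$ continuous with relatively compact range, the Schauder fixed point theorem furnishes a fixed point $\bu\in\bW$ of $\cJ$; by the equivalence recorded after \eqref{eq:operator-J}, the associated pair $\ubu=(\bu,\bomega)\in\bW\times\bL^2(\Omega)$ solves the resolvent system \eqref{eq:reduced-problem}. I expect the \emph{compactness step} to be the main obstacle: the estimate \eqref{eq:aux-continuity-T} was derived in the weaker $\bL^4(\Omega)$-norm precisely so that the compact Sobolev embedding can be exploited, since a mere $\bH^1(\Omega)$-bound would leave $\cJ$ only continuous and bounded, which is insufficient for Schauder's theorem in an infinite-dimensional setting.
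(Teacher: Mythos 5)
Your proposal is correct and follows essentially the same route as the paper: self-mapping of $\bW$ via the a priori bound \eqref{eq:u-omega-bound-f}, continuity of $\cJ$ from \eqref{eq:aux-continuity-T}, relative compactness of $\cJ(\bW)$ from the same estimate combined with the compact embedding $\bH^1(\Omega)\hookrightarrow\bL^4(\Omega)$, and then Schauder's theorem. The only cosmetic difference is that you invoke Rellich--Kondrachov directly to extract an $\bL^4$-Cauchy subsequence, whereas the paper phrases the same step via weak compactness of bounded sets in a Hilbert space plus compactness of $\bi_4$.
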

\begin{proof}
Given $\bz\in \bW$, we first recall from Lemma \ref{J-well-defined} that $\cJ$ is well defined and there exists a unique $\bu\in\bK$ such that $\cJ(\bz)=\bu$, 
which together with \eqref{eq:u-omega-bound-f} implies that $\bu\in \bW$ and proves that $\cJ(\bW) \subseteq \bW$.
Next, we observe from estimate \eqref{eq:aux-continuity-T} that $\cJ$ is continuous.
In addition, using again \eqref{eq:aux-continuity-T}, the compactness of the injection $\bi_4 : \bH^1(\Omega)\to \bL^4(\Omega)$ (see, e.g., \cite[Theorem~1.3.5]{Quarteroni-Valli}), and the well-known fact that every bounded sequence in a Hilbert space has a weakly convergent subsequence,
we deduce that $\ov{\cJ(\bW)}$ is compact.
Then, using the Schauder fixed point theorem written in the form \cite[Theorem 9.12-1(b)]{Ciarlet},
we conclude that the operator $\cJ$ has at least one fixed-point in $\bW$, that is, there exists 
$\ubu=(\bu,\bomega)\in \bW\times \bL^2(\Omega)$ a solution 
to \eqref{eq:reduced-problem}.
\end{proof}


\subsection{Construction of compatible initial data}

Now, we establish a suitable initial condition result, which is necessary to apply Theorem~\ref{thm:well-posed-parabolic-problem} to the context of \eqref{eq:KVBF-reduced-VF}.
\begin{lem}\label{lem:sol0-reduced-problem}
Assume the initial condition $\bu_0\in \bK$ (cf. \eqref{eq:def-K}).
Then, there exists $\bomega_0\in \bL^2(\Omega)$ such that $\ubu_0 = (\bu_0,\bomega_0)$ and
\begin{equation}\label{eq:sol0-H1-L2}
\cA(\bu_0)(\ubu_0)\in \bH^{-1}(\Omega)\times \{\0\} \,.
\end{equation}
\end{lem}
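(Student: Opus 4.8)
The plan is to exhibit the compatible vorticity explicitly by setting $\bomega_0 := \bcurl(\bu_0)$. Since $\bu_0\in\bK\subset\bH^1_0(\Omega)$, its first-order derivatives lie in $\bbL^2(\Omega)$, so $\bcurl(\bu_0)\in\bL^2(\Omega)$ and this choice is admissible. To interpret the membership \eqref{eq:sol0-H1-L2}, I would use the identification of the dual $\big(\bH^1_0(\Omega)\times\bL^2(\Omega)\big)'$ with $\bH^{-1}(\Omega)\times\bL^2(\Omega)$ obtained by testing separately against pairs $(\bv,\0)$ and $(\0,\bpsi)$; under this identification an element belongs to $\bH^{-1}(\Omega)\times\{\0\}$ precisely when it annihilates every test function of the form $(\0,\bpsi)$ with $\bpsi\in\bL^2(\Omega)$, while its restriction to the velocity slot lies in $\bH^{-1}(\Omega)$. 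I would therefore split the verification into these two independent checks.

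First I would evaluate the vorticity slot. Testing $\cA(\bu_0)(\ubu_0)$ against $\ubv=(\0,\bpsi)$, the convective contribution $[\bc(\bu_0)(\ubu_0),(\0,\bpsi)]=((\nabla\bu_0)\bu_0,\0)_\Omega$ vanishes, as do the Darcy and Forchheimer terms and the term $\nu\,(\bomega_0,\bcurl(\0))_\Omega$, since the velocity test function is null. From the definition \eqref{eq:operator-a} of $\ba$ only the two genuinely $\bpsi$-dependent terms survive, giving
\begin{equation*}
[\cA(\bu_0)(\ubu_0),(\0,\bpsi)] \,=\, \nu\,(\bomega_0,\bpsi)_\Omega - \nu\,(\bpsi,\bcurl(\bu_0))_\Omega \,=\, \nu\,(\bomega_0 - \bcurl(\bu_0),\bpsi)_\Omega \,.
\end{equation*}
With the choice $\bomega_0=\bcurl(\bu_0)$ this expression vanishes for every $\bpsi\in\bL^2(\Omega)$, which is exactly the statement that the $\bL^2(\Omega)$-component of $\cA(\bu_0)(\ubu_0)$ is zero.

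It then remains to check that the velocity slot $\bv\mapsto[\cA(\bu_0)(\ubu_0),(\bv,\0)]$ defines a bounded functional on $\bH^1_0(\Omega)$, hence an element of $\bH^{-1}(\Omega)$. Here I would bound the four surviving terms separately: the Darcy term $\tD\,(\bu_0,\bv)_\Omega$ by Cauchy--Schwarz, the Forchheimer term $\tF\,(|\bu_0|^{\rho-2}\bu_0,\bv)_\Omega$ by \eqref{eq:Forchheimer-bound}, the rotational coupling $\nu\,(\bomega_0,\bcurl(\bv))_\Omega$ by Cauchy--Schwarz together with an estimate of the form $\|\bcurl(\bv)\|_{\bL^2(\Omega)}\le C\,\|\bv\|_{\bH^1(\Omega)}$, and the convective term $((\nabla\bu_0)\bu_0,\bv)_\Omega$ by \eqref{eq:Convective-bound}. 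Each of these is controlled by $\|\bv\|_{\bH^1(\Omega)}$ with a constant depending only on the data and on $\|\bu_0\|_{\bH^1(\Omega)}$ and $\|\bu_0\|^{\rho-1}_{\bH^1(\Omega)}$, so the functional is bounded and lies in $\bH^{-1}(\Omega)$. Combining the two checks yields \eqref{eq:sol0-H1-L2} with $\ubu_0=(\bu_0,\bcurl(\bu_0))$.

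The argument is essentially bookkeeping rather than a deep estimate: the only genuine idea is recognizing that the pair of curl-coupling terms in $\ba$ is precisely what forces $\bomega_0$ to equal the weak vorticity $\bcurl(\bu_0)$, after which the remaining terms automatically land in $\bH^{-1}(\Omega)$ by the continuity bounds already recorded in Section~\ref{sec:vorticity-formulation}. No smallness or regularity beyond $\bu_0\in\bK$ is required, which is the mild point worth stressing.
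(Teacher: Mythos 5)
Your proof is correct and follows essentially the same route as the paper: choose $\bomega_0=\bcurl(\bu_0)$, observe that the two curl-coupling terms in $\ba$ cancel in the vorticity slot, and bound the velocity-slot functional using \eqref{eq:Forchheimer-bound} and \eqref{eq:Convective-bound}. The only cosmetic difference is that the paper re-expresses $\nu\,(\bomega_0,\bcurl(\bv))_{\Omega}$ as $\nu\,(\nabla\bu_0,\nabla\bv)_{\Omega}$ via the identity \eqref{eq:curl-curl-identity} before estimating, whereas you bound it directly with $\|\bcurl(\bv)\|_{\bL^2(\Omega)}\le C\,\|\bv\|_{\bH^1(\Omega)}$; both yield the same conclusion.
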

\begin{proof}
We proceed as in \cite[Lemma 3.7]{accgry2023}.
In fact, we define $\bomega_0 := \bcurl(\bu_0)$, with $\bu_0\in \bK$ (cf. \eqref{eq:def-K}). 
It follows that $\bomega_0\in\bL^2(\Omega)$.
In addition, using \eqref{eq:curl-curl-identity}, we get
\begin{equation}\label{eq:omega0-identity}
\nu\,\bcurl(\bomega_0) \,=\, -\nu\,\Delta\bu_0 \qin \Omega \,.
\end{equation}
Next, multiplying the identities \eqref{eq:omega0-identity} and $\nu\,(\bomega_0 - \bcurl(\bu_0)) = \0$ by $\bv\in \bH^1_0(\Omega)$ and $\bpsi\in \bL^2(\Omega)$, respectively, integrating by parts as in \eqref{eq:Green-formula}, and after minor algebraic manipulation, we obtain
\begin{equation}\label{eq:initial-condition-problem}
[\cA(\bu_0)(\ubu_0), \ubv] \,=\, [\bF_0, \ubv] \quad \forall\,\ubv\in \bH^1_0(\Omega)\times \bL^2(\Omega) \,,
\end{equation}
with $\bF_0=(\f_0,\0)$ and
\begin{equation*}
(\f_0,\bv)_{\Omega} \,:=\, \nu\,(\nabla\bu_0,\nabla\bv)_\Omega 
+ \big(\tD\,\bu_0 + \tF\,|\bu_0|^{\rho-2}\bu_0 + (\nabla\bu_0)\bu_0,\bv\big)_\Omega\,,
\end{equation*}
which together with the continuous injection of $\bH^1(\Omega)$ into $\bL^4(\Omega)$ and $\bL^{\rho}(\Omega)$, with $\rho\in [3,4]$, cf. \eqref{eq:Sobolev-inequality}, implies that 
\begin{equation}\label{eq:continuity-bound-F0}
\big| (\f_0,\bv)_{\Omega} \big|
\,\leq\, C_0\,\Big\{ \|\bu_0\|_{\bH^1(\Omega)}
+ \|\bu_0\|^2_{\bH^1(\Omega)} 
+ \|\bu_0\|^{\rho-1}_{\bH^{1}(\Omega)} \Big\}\,\|\bv\|_{\bH^1(\Omega)} \,,
\end{equation}
with $C_0 := \max\big\{ \nu + \tD, \tF\,\|\bi_{\rho}\|^{\rho}, \|\bi_4\|^2 \big\}$.
Thus, $\bF_0\in \bH^{-1}(\Omega)\times \{\0\}$ so then \eqref{eq:sol0-H1-L2} holds, completing the proof.
\end{proof}

\begin{rem}
The assumption on the initial condition $\bu_0\in \bK$ (cf. \eqref{eq:def-K}) 
is less restrictive than the one employed in \cite[Lemma 3.7]{accgry2023}  
(see also \cite[Lemma 3.6]{cy2021}, \cite[Lemma 3.7]{covy2022} and \cite[eq. (2.2)]{dr2014}) 
for the analysis of the unsteady Brinkman--Forchheimer problem since now the datum $\f_0$ is look for in $\bH^{-1}(\Omega)$ instead of $\bL^2(\Omega)$.  
Note also that $\ubu_0$ satisfying \eqref{eq:sol0-H1-L2} is not unique.
In addition, $(\ubu_0,p_0) = ((\bu_0,\bcurl(\bu_0)),0)$ can be chosen as initial condition for \eqref{eq:KVBF-variational-formulation}, that is, $(\ubu_0,p_0)$ satisfy
\begin{equation}\label{eq:initial-condition-full-problem}
\begin{array}{llll}
[\cA(\bu_0)(\ubu_0), \ubv] + [\cB'(p_0),\ubv] & = & [\bF_0, \ubv] & \forall\,\ubv\in \bH^1_0(\Omega)\times \bL^2(\Omega)\,, \\[2ex]
-\,[\cB(\ubu_0),q] & = & 0 & \forall\,q\in \L^2_0(\Omega)\,.
\end{array}
\end{equation}
\end{rem}


\subsection{Main result}

We now establish the well-posedness and stability bounds for the solution of problem \eqref{eq:KVBF-reduced-VF}.
\begin{thm}\label{thm:well-posed-result-reduced}
For each compatible initial data $\ubu_0=(\bu_0,\bomega_0)$ constructed in Lemma \ref{lem:sol0-reduced-problem} and 	
each $\f\in \W^{1,1}(0,T;\bH^{-1}(\Omega))$, 
there exists a unique solution of \eqref{eq:KVBF-reduced-VF}, $\ubu = (\bu,\bomega) :[0,T]\to \bK\times \bL^2(\Omega)$ with $\bu\in \W^{1,\infty}(0,T;\bH^{-1}(\Omega))$ 
and $\bu(0) = \bu_0$. In addition, $\bomega(0) = \bomega_0 = \bcurl(\bu_0)$
and there exists a constant $C_{\tt KVr} > 0$ only depending on 
$\nu, \tD$, and $\kappa$, such that
\begin{equation}\label{eq:stability-bound-u-om-reduced}
\begin{array}{l}
\ds \|\bu\|_{\L^\infty(0,T;\bH^1(\Omega))}
+ \|\bu\|_{\L^{2}(0,T;\bL^2(\Omega))} 
+ \|\bomega\|_{\L^{2}(0,T;\bL^2(\Omega))} \\[2ex]
\ds\qquad \leq\, C_{\tt KVr}\,\sqrt{\exp(T)}\,\Big( 
\|\f\|_{\L^2(0,T;\bH^{-1}(\Omega))} 
+ \|\bu_0\|_{\bH^1(\Omega)}
\Big) \,.
\end{array}
\end{equation}
\end{thm}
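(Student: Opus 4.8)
The plan is to apply Theorem~\ref{thm:well-posed-parabolic-problem} with the identifications in \eqref{eq:setting-E-u-N-M}--\eqref{eq:setting-Eb-D}, having already verified the monotonicity and range hypotheses in the preceding subsections. First I would confirm that all the hypotheses of Theorem~\ref{thm:well-posed-parabolic-problem} are in place: the operator $\cE$ is linear, symmetric, and monotone by \eqref{eq:operator-E} and \eqref{eq:continuity-monotonicity-E}; the seminorm $|\cdot|_b$ induced by $\cE$ coincides with $\|\cdot\|_{\bH^1(\Omega)}$ since $\kappa>0$, so $E'_b = \bH^{-1}(\Omega)\times\{\0\}$; the relation $\cM = \cA(\bu)$ is monotone on the kernel by \eqref{eq:strong-monotonicity-of-EAz} (with $\bz=\bu$); and the range condition $Rg(\cE+\cA(\bu)) = E'_b$ is exactly the content of Lemma~\ref{lem:resolvent-system}, since for each $\wh{\f}\in\bH^{-1}(\Omega)$ the resolvent system \eqref{eq:reduced-problem} has a solution. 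The compatible initial datum $\ubu_0 = (\bu_0,\bcurl(\bu_0))\in\cD$ is furnished by Lemma~\ref{lem:sol0-reduced-problem}, and the regularity $\f\in\W^{1,1}(0,T;\bH^{-1}(\Omega))$ matches the hypothesis on the right-hand side. Theorem~\ref{thm:well-posed-parabolic-problem} then yields existence of $\ubu=(\bu,\bomega)$ with $\cE(\ubu)\in\W^{1,\infty}(0,T;E'_b)$, hence $\bu\in\W^{1,\infty}(0,T;\bH^{-1}(\Omega))$, together with $\bu(0)=\bu_0$ and $\bomega(0)=\bomega_0=\bcurl(\bu_0)$.

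For uniqueness, I would argue by contradiction as announced in the introduction. Suppose $\ubu_1=(\bu_1,\bomega_1)$ and $\ubu_2=(\bu_2,\bomega_2)$ both solve \eqref{eq:KVBF-reduced-VF} with the same data. Setting $\be:=\bu_1-\bu_2$ and $\bchi:=\bomega_1-\bomega_2$, I would subtract the two equations, test with $\ubv=(\be,\bchi)$, and handle the difference of the nonlinear convective terms $[\bc(\bu_1)(\ubu_1),\ubv]-[\bc(\bu_2)(\ubu_2),\ubv]$ by writing it as $[\bc(\bu_1)(\ube,\bchi),\ubv] + [\bc(\be)(\ubu_2),\ubv]$ where $\ube:=\ubu_1-\ubu_2$. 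The first piece vanishes by \eqref{eq:null-identity-cz} since $\bu_1\in\bK$, while the second is controlled by \eqref{eq:continuity-cz-z} and absorbed using Young's inequality \eqref{eq:Young-inequality}. Combining this with the strong monotonicity \eqref{eq:strong-monotonicity-of-EAz} and noting that the time-derivative term produces $\tfrac12\tfrac{d}{dt}\{\|\be\|^2_{\bL^2(\Omega)}+\kappa^2\|\nabla\be\|^2_{\bbL^2(\Omega)}\}$, I would arrive at a differential inequality of the form $\tfrac{d}{dt}\|\be\|^2_{\cE} \le C\,\|\bu_2\|^2_{\bH^1(\Omega)}\,\|\be\|^2_{\cE}$; since $\be(0)=\0$, Gr\"onwall's inequality forces $\be\equiv\0$, and then $\bchi\equiv\0$ from the monotonicity term in $\bL^2(\Omega)$. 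Uniqueness of $p$ follows from the inf-sup condition \eqref{eq:B-inf-sup-condition} via Lemma~\ref{lem:equivalent-problems}.

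For the stability bound \eqref{eq:stability-bound-u-om-reduced}, I would test \eqref{eq:KVBF-reduced-VF} with $\ubv=\ubu(t)=(\bu(t),\bomega(t))$. The convective term drops out by \eqref{eq:null-identity-cz} since $\bu(t)\in\bK$, and the time-derivative term gives $\tfrac12\tfrac{d}{dt}\{\|\bu\|^2_{\bL^2(\Omega)}+\kappa^2\|\nabla\bu\|^2_{\bbL^2(\Omega)}\}$. Using the coercivity \eqref{eq:coercivity-of-EAz} on the remaining terms of $\cA(\bu)$ produces lower bounds $\gamma_{\tKV}\|\bu\|^2_{\bH^1(\Omega)}+\nu\|\bomega\|^2_{\bL^2(\Omega)}$, while the right-hand side is estimated by \eqref{eq:continuity-F} and Young's inequality as $\tfrac{1}{2\gamma_{\tKV}}\|\f\|^2_{\bH^{-1}(\Omega)}+\tfrac{\gamma_{\tKV}}{2}\|\bu\|^2_{\bH^1(\Omega)}$. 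This yields
\begin{equation*}
\frac{d}{dt}\|\bu(t)\|^2_{\cE} + \gamma_{\tKV}\|\bu(t)\|^2_{\bH^1(\Omega)} + 2\nu\|\bomega(t)\|^2_{\bL^2(\Omega)} \,\leq\, \frac{1}{\gamma_{\tKV}}\|\f(t)\|^2_{\bH^{-1}(\Omega)}\,,
\end{equation*}
after which I would integrate in time from $0$ to an arbitrary $t\le T$, use $\|\bu(0)\|_{\cE}=\|\bu_0\|_{\bH^1(\Omega)}$, and apply Gr\"onwall's inequality to bound $\|\bu(t)\|^2_{\cE}$, hence $\|\bu\|_{\L^\infty(0,T;\bH^1(\Omega))}$; the integrated coercive terms then control $\|\bu\|_{\L^2(0,T;\bL^2(\Omega))}$ and $\|\bomega\|_{\L^2(0,T;\bL^2(\Omega))}$, giving the factor $\sqrt{\exp(T)}$. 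I expect the main obstacle to be the uniqueness step: because the convective nonlinearity is only controlled in $\bL^4$ and the difference equation is tested in the $\cE$-norm, care is needed to split the convective difference so that the piece not killed by \eqref{eq:null-identity-cz} is absorbed cleanly, and to ensure the resulting Gr\"onwall coefficient $\|\bu_2\|^2_{\bH^1(\Omega)}$ is integrable in time, which is precisely guaranteed by the $\L^\infty(0,T;\bH^1(\Omega))$ regularity obtained from the stability bound.
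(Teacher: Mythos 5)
Your overall architecture coincides with the paper's: Showalter's Theorem~\ref{thm:well-posed-parabolic-problem} with the identifications \eqref{eq:setting-E-u-N-M}--\eqref{eq:setting-Eb-D} for existence, an energy estimate plus Gr\"onwall for \eqref{eq:stability-bound-u-om-reduced}, and a subtract-and-Gr\"onwall argument (splitting the convective difference so that one piece dies by \eqref{eq:null-identity-cz} and the other is controlled by \eqref{eq:continuity-cz-z}) for uniqueness. However, the displayed differential inequality in your stability step cannot be derived. The coercivity \eqref{eq:coercivity-of-EAz} is a property of $\cE+\cA(\bz)$, and the $\|\nabla\bu\|^2_{\bbL^2(\Omega)}$ contribution to it comes entirely from $\cE$; in \eqref{eq:KVBF-reduced-VF} the operator $\cE$ sits under the time derivative and is already spent producing $\tfrac12\tfrac{d}{dt}\|\bu\|^2_{\cE}$. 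Testing with $\ubv=\ubu$, the operator $\cA(\bu)$ alone yields only $\tD\,\|\bu\|^2_{\bL^2(\Omega)}+\tF\,\|\bu\|^\rho_{\bL^\rho(\Omega)}+\nu\,\|\bomega\|^2_{\bL^2(\Omega)}$ --- no control of the full $\bH^1$-norm (this is also why the theorem only claims $\|\bu\|_{\L^2(0,T;\bL^2(\Omega))}$, not an $\bH^1$-in-time bound). Hence the Young term $\tfrac{\gamma_{\tKV}}{2}\|\bu\|^2_{\bH^1(\Omega)}$ coming from $(\f,\bu)_\Omega$ cannot be absorbed on the left; it must stay on the right, as in the paper's \eqref{eq:stability-1}, and it is precisely this leftover term that makes the Gr\"onwall step necessary and produces the $\exp(T)$ factor. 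As written, your inequality would yield a bound with no exponential factor at all, and your subsequent appeal to Gr\"onwall is inconsistent with it. The conclusion is still reachable; the fix is exactly the paper's route.

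A second, smaller gap: $\bomega(0)=\bomega_0$ does not follow from Theorem~\ref{thm:well-posed-parabolic-problem}, which only delivers $\cN(u(0))=\cN(u_0)$, i.e.\ $\cE(\ubu(0))=\cE(\ubu_0)$; since $\cE$ is degenerate in the vorticity component, this pins down only $\bu(0)=\bu_0$. One must additionally pass to the limit $t\to0^+$ in the equation without time derivatives (using $\cM(u)\in\L^\infty(0,T;E'_b)$ and the fact that $\ubu_0$ satisfies the same equation at $t=0$ by \eqref{eq:sol0-H1-L2}) to obtain $\nu\,(\bomega(0)-\bomega_0,\bpsi)_\Omega=0$ for all $\bpsi\in\bL^2(\Omega)$ and conclude $\bomega(0)=\bcurl(\bu_0)$, which is how the paper argues. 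Your uniqueness step is essentially the paper's and is fine, provided you note (as you do) that the Gr\"onwall coefficient involving $\|\bu_2\|_{\bH^1(\Omega)}$ is controlled via the already-established bound \eqref{eq:stability-bound-u-om-reduced}.
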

\begin{proof}	
We recall that \eqref{eq:KVBF-reduced-VF} fits the problem 
in Theorem \ref{thm:well-posed-parabolic-problem} with 
the definitions \eqref{eq:setting-E-u-N-M} and \eqref{eq:setting-Eb-D}.	
Note that $\cN$ is linear, symmetric and monotone since $\cE$ is 
(cf. \eqref{eq:continuity-monotonicity-E}).
In addition, since $\cA(\bu)$ is strongly monotone for any $\bu\in \bK$, it follows that $\cM$ is monotone.
On the other hand, from Lemma \ref{lem:resolvent-system} we know that given $(\wh{\f}, \0) \in E_b'$ there exists $\ubu\in \bK\times \bL^2(\Omega)$, such that $(\wh{\f}, \0) = (\cN + \cM)(\ubu)$ which implies $Rg(\cN+\cM)=E_b'$.
Finally, considering $\bu_0\in \bK$, from a straightforward application of 
Lemma \ref{lem:sol0-reduced-problem} we are able to find $\bomega_0\in \bL^2(\Omega)$  
such that $\ubu_0 = (\bu_0,\bomega_0)\in \cD$ and $(\f_0,\0)\in E'_b$.
Therefore, applying Theorem~\ref{thm:well-posed-parabolic-problem} to our context, we 
conclude the existence of a solution $\ubu = (\bu,\bomega)$ to \eqref{eq:KVBF-reduced-VF}, 
with $\bu\in \W^{1,\infty}(0,T;\bH^{-1}(\Omega))$ and $\bu(0) = \bu_0$. 

We next show the stability bound \eqref{eq:stability-bound-u-om-reduced}, which will be used to prove that the solution of \eqref{eq:KVBF-reduced-VF} is unique.
Indeed, to derive \eqref{eq:stability-bound-u-om-reduced}, we proceed as in \cite[Theorem~3.3]{cy2021} and choose $\ubv = \ubu$ in \eqref{eq:KVBF-reduced-VF} to get
\begin{equation}\label{eq:stability-0}
\frac{1}{2}\,\partial_t\,\Big( \|\bu\|^2_{\bL^2(\Omega)} + \kappa^2\,\|\nabla\bu\|^2_{\bbL^2(\Omega)} \Big) 
+ [\cA(\bu)(\ubu),\ubu] = (\f,\bu)_{\Omega} \,.
\end{equation}
Next, from the definition of the operator $\cA(\bz)$ (cf. \eqref{eq:operator-Au}), 
employing similar arguments to \eqref{eq:coercivity-of-EAz} and 
using Cauchy--Schwarz and Young's inequalities, we obtain
\begin{equation}\label{eq:stability-1}
\ds \frac{\wh{\gamma}_{\tKV}}{2}\,\partial_t \|\bu\|^2_{\bH^1(\Omega)} 
+ \tD\,\|\bu\|^2_{\bL^2(\Omega)}
+ \tF\,\|\bu\|^\rho_{\bL^{\rho}(\Omega)} 
+ \nu\,\|\bomega\|^2_{\bL^2(\Omega)} 
\,\leq\, \frac{1}{2}\,\Big( \|\f\|^2_{\bH^{-1}(\Omega)} 
+ \|\bu\|^2_{\bH^1(\Omega)} \Big) \,,
\end{equation}
where $\wh{\gamma}_\tKV := \min\big\{ 1, \kappa^2 \big\}$.
Then, integrating \eqref{eq:stability-1} from $0$ to $t\in (0,T]$, we obtain
\begin{equation}\label{eq:stability-3}
\begin{array}{l}
\ds \wh{\gamma}_{\tKV}\,\|\bu(t)\|^2_{\bH^1(\Omega)} 
+ \int^t_0 \Big( 2\,\tD\,\|\bu\|^2_{\bL^2(\Omega)} 
+ 2\,\nu\,\|\bomega\|^2_{\bL^2(\Omega)} \Big) ds \\[2ex]
\ds\quad \,\leq\, \int^t_0 \|\f\|^2_{\bH^{-1}(\Omega)}\,ds + \wh{\gamma}_{\tKV}\,\|\bu(0)\|^2_{\bH^1(\Omega)} + \int^t_0 \|\bu\|^2_{\bH^1(\Omega)}\,ds\,,
\end{array}
\end{equation}
which, together with the Gr\"onwall inequality and the fact that $\bu(0)=\bu_0$, yields \eqref{eq:stability-bound-u-om-reduced}.
Notice that, in order to simplify the stability bound, we have neglected the positive term $\int^t_0 \|\bu\|^\rho_{\bL^{\rho}(\Omega)}\,ds$ in the left hand side of \eqref{eq:stability-3}.	

The aforementioned uniqueness of \eqref{eq:KVBF-reduced-VF} is now provided.	
In fact, let $\ubu_i = (\bu_i,\bomega_i)$, with $i\in \{1,2\}$, 
be two solutions corresponding to the same data. Then, taking \eqref{eq:KVBF-reduced-VF} 
with $\ubv = \ubu_1 - \ubu_2\in \bK\times \bL^2(\Omega)$, 
subtracting the problems, we deduce that
\begin{equation*}
\begin{array}{l}
\ds \frac{1}{2}\,\partial_t\,\Big( \|\bu_1 - \bu_2\|^2_{\bL^2(\Omega)} + \kappa^2\,\|\nabla(\bu_1 - \bu_2)\|^2_{\bbL^2(\Omega)} \Big) \\[2ex]
\ds\quad +\, [\cA(\bu_1)(\ubu_1) - \cA(\bu_1)(\ubu_2), \ubu_1 - \ubu_2] \,=\, - [\bc(\bu_1 - \bu_2)(\ubu_2),\ubu_1 - \ubu_2] \,.
\end{array}
\end{equation*}
Then, using similar arguments to \eqref{eq:strong-monotonicity-of-EAz}, the definition of the operator $\cA(\bz)$ (cf. \eqref{eq:operator-Au}), the identity \eqref{eq:null-identity-cz}, \eqref{eq:strict-monotonicity-BF-term}, and the continuity bound of $\bc(\bz)$ (cf. \eqref{eq:continuity-cz-z}), we get
\begin{equation}\label{eq:uniqueness-0}
\begin{array}{l}
\ds \frac{\wh{\gamma}_\tKV}{2}\,\partial_t \|\bu_1 - \bu_2\|^2_{\bH^1(\Omega)} 
+ \tD\,\|\bu_1 - \bu_2\|^2_{\bL^2(\Omega)} 
+ \nu\,\|\bomega_1 - \bomega_2\|^2_{\bL^2(\Omega)} \\[2ex] 
\ds\qquad \leq\, \|\bi_4\|^2 \|\bu_2\|_{\bH^1(\Omega)} \|\bu_1 - \bu_2\|^2_{\bH^1(\Omega)} \,,
\end{array}
\end{equation}
with $\wh{\gamma}_\tKV$ as in \eqref{eq:stability-1}.
Integrating in time \eqref{eq:uniqueness-0} from $0$ to $t\in (0,T]$, using the fact that $\|\bu\|_{\L^\infty(0,T;\bH^1(\Omega))}$ is bounded by data (cf. \eqref{eq:stability-bound-u-om-reduced}) in conjunction with the Gr\"onwall inequality and algebraic manipulations, we obtain
\begin{equation*}
\|\bu_1(t) - \bu_2(t)\|^2_{\bH^1(\Omega)} 
+ \int^t_0 \Big( \|\bu_1 - \bu_2\|^2_{\bL^2(\Omega)} 
+ \|\bomega_1 - \bomega_2\|^2_{\bL^2(\Omega)} \Big)\,ds 
\,\leq\, C\,\exp(T)\,\|\bu_1(0) - \bu_2(0)\|^2_{\bH^1(\Omega)} \,,
\end{equation*}
with $C>0$ depending on $\nu, \tD, \kappa, \|\bi_4\|$, and data.
Therefore, recalling that $\bu_1(0) = \bu_2(0)$, it follows that $\bu_1(t) = \bu_2(t)$ and $\bomega_1(t) = \bomega_2(t)$ for all $t\in (0,T]$.
	
Finally, since Theorem \ref{thm:well-posed-parabolic-problem} implies that 
$\cM(u)\in \L^\infty(0,T;E'_b)$, we can take $t\to 0$ in all equations 
without time derivatives in \eqref{eq:KVBF-reduced-VF}.
Using that the initial data $\ubu_0=(\bu_0,\bomega_0)$ satisfies 
the same equations at $t=0$ (cf. \eqref{eq:sol0-H1-L2}), and 
that $\bu(0) = \bu_0$, we obtain
\begin{equation}\label{eq:omega-t-0}
\nu\,(\bomega(0) - \bomega_0,\bpsi)_{\Omega} \,=\, 0 \quad \forall\,\bpsi\in \bL^2(\Omega)\,.
\end{equation}
Thus, taking $\bpsi = \bomega(0) - \bomega_0$ in \eqref{eq:omega-t-0} we deduce that $\bomega(0) = \bomega_0 = \bcurl(\bu_0)$, completing the proof.
\end{proof}

\medskip
We conclude this section by establishing the well-posedness and stability bounds for the solution of problem \eqref{eq:KVBF-variational-formulation}.
\begin{thm}\label{thm:stability-bound}
For each $\f\in \W^{1,1}(0,T;\bH^{-1}(\Omega))$ and $\bu_0\in \bK$, 
there exists a unique solution of \eqref{eq:KVBF-variational-formulation}, 
$(\ubu, p) = ((\bu,\bomega), p) :[0,T]\to \big(\bH^1_0(\Omega)\times \bL^2(\Omega)\big)\times \L^2_0(\Omega)$ 
with $\bu\in \W^{1,\infty}(0,T;\bH^{-1}(\Omega))$ 
and $\bu(0) = \bu_0$. In addition, $\bomega(0) = \bomega_0 = \bcurl(\bu_0)$
and there holds the stability bound \eqref{eq:stability-bound-u-om-reduced} with the same constant $C_{\tt KVr}$ only depending on 
$\nu, \tD$, and $\kappa$.
Moreover, there exists a constant $C_{\tt KVp} > 0$ only depending on 
$|\Omega|$, $\|\bi_\rho\|, \|\bi_4\|, \nu, \tD, \tF, \kappa$, and $\beta$, such that
\begin{equation}\label{eq:stability-bound}
\|p\|_{\L^{2}(0,T;\L^2(\Omega))} 
\,\leq\, C_{\tt KVp}\,\Bigg( 
\sum_{j\in \{ 2,3,\rho \}} \left\{ \sqrt{\exp(T)}\,\Big( \|\f\|_{\L^2(0,T;\bH^{-1}(\Omega))} 
+ \|\bu_0\|_{\bH^1(\Omega)} \Big) \right\}^{j-1}
+ \|\bu_0\|^{\rho/2}_{\bL^\rho(\Omega)}
\Bigg) \,.
\end{equation}
\end{thm}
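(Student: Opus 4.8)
The plan is to transfer the solvability from the reduced problem and then to establish the pressure bound by an inf-sup argument. Existence and uniqueness of $(\ubu,p)$, the regularity $\bu\in\W^{1,\infty}(0,T;\bH^{-1}(\Omega))$, the identity $\bomega(0)=\bomega_0=\bcurl(\bu_0)$, and the velocity--vorticity bound \eqref{eq:stability-bound-u-om-reduced} all follow at once: by Lemma~\ref{lem:equivalent-problems} the solution $\ubu$ of \eqref{eq:KVBF-reduced-VF} produced by Theorem~\ref{thm:well-posed-result-reduced} is accompanied by a unique $p$ solving \eqref{eq:KVBF-variational-formulation}, uniqueness of $p$ being a consequence of the inf-sup condition \eqref{eq:B-inf-sup-condition}. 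It therefore only remains to bound $p$. For brevity write $D:=\sqrt{\exp(T)}\big(\|\f\|_{\L^2(0,T;\bH^{-1}(\Omega))}+\|\bu_0\|_{\bH^1(\Omega)}\big)$, so that \eqref{eq:stability-bound-u-om-reduced} controls $\|\bu\|_{\L^\infty(0,T;\bH^1(\Omega))}$, $\|\bu\|_{\L^2(0,T;\bL^2(\Omega))}$ and $\|\bomega\|_{\L^2(0,T;\bL^2(\Omega))}$ by $D$.

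Testing the first equation of \eqref{eq:KVBF-variational-formulation} with $\ubv=(\bv,\0)$ and recalling \eqref{eq:operator-E}--\eqref{eq:operator-B} gives, for a.e. $t$,
\[
(p,\div(\bv))_\Omega = (\partial_t\bu,\bv)_\Omega + \kappa^2(\partial_t\nabla\bu,\nabla\bv)_\Omega + [\cA(\bu)(\ubu),(\bv,\0)] - (\f,\bv)_\Omega .
\]
By \eqref{eq:B-inf-sup-condition}, $\beta\,\|p\|_{\L^2(\Omega)}\le\sup_{\0\neq\bv\in\bH^1_0(\Omega)}(p,\div(\bv))_\Omega/\|\bv\|_{\bH^1(\Omega)}$, so it suffices to bound each term on the right by $\|\bv\|_{\bH^1(\Omega)}$ times a quantity controllable in $\L^2(0,T)$. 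The source and the linear parts $\tD(\bu,\bv)_\Omega$ and $\nu(\bomega,\bcurl(\bv))_\Omega$ of $\cA$ (cf. \eqref{eq:operator-a}) contribute $\|\f\|_{\bH^{-1}(\Omega)}$, $\tD\|\bu\|_{\bL^2(\Omega)}$ and $\nu\|\bomega\|_{\bL^2(\Omega)}$, whose $\L^2(0,T)$ norms are $\lesssim D$ by \eqref{eq:stability-bound-u-om-reduced}; the convective term is bounded via \eqref{eq:Convective-bound} by $\|\bi_4\|^2\|\bu\|^2_{\bH^1(\Omega)}$ and the Forchheimer term via \eqref{eq:Forchheimer-bound} by $\tF\|\bi_\rho\|^\rho\|\bu\|^{\rho-1}_{\bH^1(\Omega)}$, whose $\L^2(0,T)$ norms are controlled through $\|\bu\|_{\L^\infty(0,T;\bH^1(\Omega))}$, hence $\lesssim D^2$ and $\lesssim D^{\rho-1}$ respectively. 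These account for the $j\in\{2,3,\rho\}$ terms of \eqref{eq:stability-bound}.

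The main difficulty is the time-derivative contribution $(\partial_t\bu,\bv)_\Omega+\kappa^2(\partial_t\nabla\bu,\nabla\bv)_\Omega$, which must be expressed through the data alone. Here I would exploit the Kelvin--Voigt regularisation: since $\cE$ is coercive on $\bH^1_0(\Omega)$ (cf. \eqref{eq:continuity-monotonicity-E}) and $\cE(\ubu)\in\W^{1,\infty}(0,T;E'_b)$ by Theorem~\ref{thm:well-posed-parabolic-problem}, inverting $\cE$ yields $\partial_t\bu\in\L^\infty(0,T;\bK)$, so that $\partial_t\bu$ is divergence free and $\bH^1$-regular. Thus $(\partial_t\bu,\0)$ is an admissible test function in the relation $[\cE(\partial_t\ubu),\ubv]+[\cA(\bu)(\ubu),\ubv]=[\bF,\ubv]$ equivalent to \eqref{eq:KVBF-reduced-VF}; choosing it gives, for a.e. $t$,
\[
\|\partial_t\bu\|^2_{\bL^2(\Omega)}+\kappa^2\|\partial_t\nabla\bu\|^2_{\bbL^2(\Omega)}+\tF(|\bu|^{\rho-2}\bu,\partial_t\bu)_\Omega = (\f,\partial_t\bu)_\Omega - \tD(\bu,\partial_t\bu)_\Omega - \nu(\bomega,\bcurl(\partial_t\bu))_\Omega - ((\nabla\bu)\bu,\partial_t\bu)_\Omega .
\]
The decisive point is the identity $\tF(|\bu|^{\rho-2}\bu,\partial_t\bu)_\Omega=\tfrac{\tF}{\rho}\,\partial_t\|\bu\|^\rho_{\bL^\rho(\Omega)}$: integrating over $(0,T)$ turns it into the boundary contribution $\tfrac{\tF}{\rho}\|\bu_0\|^\rho_{\bL^\rho(\Omega)}$ (the terminal value having a favourable sign), which is exactly the origin of the extra summand $\|\bu_0\|^{\rho/2}_{\bL^\rho(\Omega)}$ in \eqref{eq:stability-bound}. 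The remaining right-hand terms are estimated by Cauchy--Schwarz, \eqref{eq:Convective-bound} and Young's inequality \eqref{eq:Young-inequality}, absorbing every $\|\partial_t\bu\|_{\bH^1(\Omega)}$ factor into the coercive left-hand side; using \eqref{eq:stability-bound-u-om-reduced} one obtains $\|\partial_t\bu\|_{\L^2(0,T;\bH^1(\Omega))}\lesssim\|\bu_0\|^{\rho/2}_{\bL^\rho(\Omega)}+D+D^2$. Rigorously justifying that $(\partial_t\bu,\0)$ may be used as a test function, e.g. through difference quotients in time or the Galerkin scheme underlying Theorem~\ref{thm:well-posed-parabolic-problem}, is the technical heart of this step.

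Collecting the contributions of the two preceding paragraphs, dividing by $\beta$, and absorbing the (at most algebraic) factors of $T$ into the constant and the already present $\sqrt{\exp(T)}$, I would arrive at $\|p\|_{\L^2(0,T;\L^2(\Omega))}\lesssim D+D^2+D^{\rho-1}+\|\bu_0\|^{\rho/2}_{\bL^\rho(\Omega)}$, which is precisely \eqref{eq:stability-bound} with a constant $C_{\tt KVp}$ depending only on $|\Omega|,\|\bi_\rho\|,\|\bi_4\|,\nu,\tD,\tF,\kappa$ and $\beta$.
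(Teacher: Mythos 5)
Your proposal is correct and follows essentially the same route as the paper: well-posedness transferred from the reduced problem via Lemma~\ref{lem:equivalent-problems} and Theorem~\ref{thm:well-posed-result-reduced}, the inf-sup condition \eqref{eq:B-inf-sup-condition} for the pressure, and a test with $\partial_t\bu$ exploiting the Kelvin--Voigt coercivity together with the identity $\tF(|\bu|^{\rho-2}\bu,\partial_t\bu)_\Omega=\tfrac{\tF}{\rho}\partial_t\|\bu\|^{\rho}_{\bL^\rho(\Omega)}$, which is indeed the source of the $\|\bu_0\|^{\rho/2}_{\bL^\rho(\Omega)}$ term. The only (immaterial) difference is that the paper differentiates the vorticity and divergence equations in time and tests the full system with $((\partial_t\bu,\bomega),p)$ so that the Darcy and vorticity terms become exact time derivatives, whereas you work in the reduced system with $(\partial_t\bu,\0)$ and absorb those terms via Young's inequality and the already-established bound \eqref{eq:stability-bound-u-om-reduced}.
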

\begin{proof}
We begin by recalling from Lemma \ref{lem:equivalent-problems} that the problems
\eqref{eq:KVBF-variational-formulation} and \eqref{eq:KVBF-reduced-VF} are equivalent.
Thus, the well-posedness of \eqref{eq:KVBF-variational-formulation} follows from Theorem \ref{thm:well-posed-result-reduced}.

On the other hand, to derive \eqref{eq:stability-bound-u-om-reduced} and \eqref{eq:stability-bound}, we first choose $\ubv = \ubu$ and $q=p$ in \eqref{eq:KVBF-variational-formulation} to deduce \eqref{eq:stability-0}--\eqref{eq:stability-3} and consequently \eqref{eq:stability-bound-u-om-reduced} also holds for the problem \eqref{eq:KVBF-variational-formulation}.
In turn, from the inf-sup condition of $\cB$ (cf. \eqref{eq:B-inf-sup-condition}), 
the first equation of \eqref{eq:KVBF-variational-formulation} related to $\bv$, 
the stability bounds of $\bF, \cE$ (cf. \eqref{eq:continuity-F}, \eqref{eq:continuity-monotonicity-E}), 
the definition of $\cA(\bz)$ (cf. \eqref{eq:operator-Au}), and the continuous injections 
of $\bH^1(\Omega)$ into $\bL^4(\Omega)$ and $\bL^{\rho}(\Omega)$, with $\rho\in [3,4]$, we deduce that
\begin{equation}\label{eq:stability-p-inf-sup}
\begin{array}{l}
\ds \beta\,\|p\|_{\L^2(\Omega)}
\,\leq\, 
\sup_{\0\neq\bv\in \bH^1_0(\Omega)} \frac{[\bF,(\bv,\0)] - [\partial_t\,\cE(\ubu),(\bv,\0)] - [\cA(\bu)(\ubu),(\bv,\0)]}{\|\bv\|_{\bH^1(\Omega)}} \\ [3ex]
\ds\quad \,\leq\, \|\f\|_{\bH^{-1}(\Omega)} 
+ \tD\,\|\bu\|_{\bL^2(\Omega)}
+ \nu\,\|\bomega\|_{\bL^2(\Omega)} \\[2ex]
\ds\qquad 
\,+\, \|\bi_4\|^2\,\|\bu\|^2_{\bH^1(\Omega)}  
+ \tF\,\|\bi_\rho\|^\rho\,\|\bu\|^{\rho-1}_{\bH^1(\Omega)} 
+ (1 + \kappa^2)\,\|\partial_t\,\bu\|_{\bH^1(\Omega)} \,.
\end{array}
\end{equation}
Then, taking square in \eqref{eq:stability-p-inf-sup} and integrating from $0$ to $t\in (0,T]$, we deduce that there exits $C_1>0$ depending on $|\Omega|, \|\bi_\rho\|, \|\bi_4\|, \nu, \tF, \tD, \kappa$, and $\beta$, such that
\begin{equation}\label{eq:stability-4}
\begin{array}{l}
\ds \int^t_0 \|p\|^2_{\L^2(\Omega)}\,ds  
\leq C_1\,\Bigg\{ \int^t_0 \Big( \|\f\|^2_{\bH^{-1}(\Omega)}
+ \|\bu\|^2_{\bL^2(\Omega)}
+ \|\bomega\|^2_{\bL^2(\Omega)} \Big)\,ds \\[3ex]
\ds\quad \,+\,  \int^t_0 \Big( \|\bu\|^4_{\bH^1(\Omega)}
+ \|\bu\|^{2\,(\rho-1)}_{\bH^1(\Omega)} 
+ \|\partial_t\,\bu\|^2_{\bH^1(\Omega)} \Big)\,ds \Bigg\} \,.
\end{array}
\end{equation}
Next, in order to bound the last term in \eqref{eq:stability-4}, we differentiate in time 
the equations of \eqref{eq:KVBF-variational-formulation} related to $\bpsi$ and $q$, 
choose $(\ubv,q) = ((\partial_t\,\bu,\bomega),p)$, use \eqref{eq:Convective-bound} in conjunction with  Cauchy--Schwarz and Young's inequalities, to find that
\begin{equation}\label{eq:stability-4-b}
\begin{array}{l}
\ds \frac{1}{2}\,\partial_t\,\Big(\tD\,\|\bu\|^2_{\bL^2(\Omega)} 
+ \frac{2\,\tF}{\rho}\,\|\bu\|^{\rho}_{\bL^{\rho}(\Omega)} 
+ \nu\,\|\bomega\|^2_{\bL^2(\Omega)} \Big)
+ \wh{\gamma}_{\tKV}\,\|\partial_t\,\bu\|^2_{\bH^1(\Omega)} \\[2ex]
\ds\quad \,\leq\, \Big(\|\f\|_{\bH^{-1}(\Omega)} + \|\bi_4\|^2\,\|\bu\|^2_{\bH^1(\Omega)}\Big)\,\|\partial_t\,\bu\|_{\bH^1(\Omega)}
\\[3ex]
\ds\quad \,\leq\, C_2\,\Big( \|\f\|^2_{\bH^{-1}(\Omega)} + \|\bu\|^4_{\bH^1(\Omega)} \Big) 
+ \frac{\wh{\gamma}_{\tKV}}{2}\,\|\partial_t\,\bu\|^2_{\bH^1(\Omega)} \,,
\end{array}
\end{equation}
with $\wh{\gamma}_\tKV$ as in \eqref{eq:stability-1} and $C_2>0$ only depending on $\|\bi_4\|$ and $\kappa$.
Thus, integrating \eqref{eq:stability-4-b} from $0$ to $t\in (0,T]$, we get
\begin{equation}\label{eq:stability-5}
\begin{array}{l}
\ds \tD\,\|\bu(t)\|^2_{\bL^2(\Omega)} 
+ \frac{2\,\tF}{\rho}\,\|\bu(t)\|^\rho_{\bL^{\rho}(\Omega)}
+ \nu\,\|\bomega(t)\|^2_{\bL^2(\Omega)} 
+ \wh{\gamma}_{\tKV}\,\int^t_0 \|\partial_t\,\bu\|^2_{\bH^1(\Omega)}\,ds \\[3ex]
\ds\quad \leq\, 2\,C_2 \int^t_0 \Big( \|\f\|^2_{\bH^{-1}(\Omega)} 
+ \|\bu\|^4_{\bH^1(\Omega)} \Big) ds 
+ \tD\,\|\bu(0)\|^2_{\bL^2(\Omega)} 
+ \frac{2\,\tF}{\rho}\,\|\bu(0)\|^\rho_{\bL^{\rho}(\Omega)}
+ \nu\,\|\bomega(0)\|^2_{\bL^2(\Omega)} \,.
\end{array}
\end{equation}
Combining \eqref{eq:stability-4} with \eqref{eq:stability-3} and \eqref{eq:stability-5}, and using the fact that $(\bu(0),\bomega(0)) = (\bu_0,\bomega_0)$ and $\bomega_0 = \bcurl(\bu_0)$ in $\Omega$ (cf. Lemma \ref{lem:sol0-reduced-problem}), we deduce that
\begin{equation}\label{eq:stability-6}
\begin{array}{l}
\ds \int^t_0 \|p\|^2_{\L^2(\Omega)}\,ds  
\,\leq\, C_3\,\Bigg\{ \int^t_0 \|\f\|^2_{\bH^{-1}(\Omega)}\,ds
+ \|\bu_0\|^2_{\bH^1(\Omega)} 
+ \|\bu_0\|^\rho_{\bL^\rho(\Omega)} \\[3ex]
\ds\quad \,+\, \int^t_0 \Big( \|\bu\|^2_{\bH^1(\Omega)}
+ \|\bu\|^4_{\bH^1(\Omega)}
+ \|\bu\|^{2\,(\rho-1)}_{\bH^1(\Omega)} \Big)\,ds
\Bigg\} \,,
\end{array} 
\end{equation}
with $C_3>0$ only depending on $|\Omega|, \|\bi_\rho\|, \|\bi_4\|, \nu, \tF, \tD, \kappa$, and $\beta$.
Finally, using \eqref{eq:stability-bound-u-om-reduced} to bound $\|\bu\|^2_{\bH^1(\Omega)}$, $\|\bu\|^4_{\bH^1(\Omega)}$, and $\|\bu\|^{2\,(\rho-1)}_{\bH^1(\Omega)}$ in the left-hand side of \eqref{eq:stability-6}, we obtain \eqref{eq:stability-bound} concluding the proof.
\end{proof}

\begin{rem}
Observe that \eqref{eq:stability-bound} can be expanded to include a bound on $\|\partial_t\,\bu\|_{\L^2(0,T;\bH^1(\Omega))}$ and
$\|\bomega\|_{\L^{\infty}(0,T;\bL^2(\Omega))}$, using \eqref{eq:stability-5}. 
We also note that \eqref{eq:stability-bound-u-om-reduced} will be employed in the next section to deal with the nonlinear terms associated to the operator $\cA$ (cf. \eqref{eq:operator-Au}), which is necessary to obtain the corresponding error estimate.
\end{rem}


\section{Semidiscrete continuous-in-time approximation}\label{sec:semidiscrete-approximation}

In this section we introduce and analyze the semidiscrete continuous-in-time approximation of \eqref{eq:KVBF-variational-formulation}. 
We analyze its solvability by employing the strategy developed in Section \ref{sec:well-posedness-model}.
Finally, we derive the error estimates and obtain the corresponding rates of convergence.

\subsection{Existence and uniqueness of a solution}\label{sec:existence-uniqueness-solution}

Let $\cT_h$ be a shape-regular triangulation of $\Omega$ consisting 
of triangles $K$ (when $d=2$) or tetrahedra $K$ (when $d=3$) of diameter $h_K$, 
and define the mesh-size $h:=\max\big\{h_K:\,K\in\cT_h\big\}$. Let $(\bH^{\bu}_h, \H^{p}_h)$ be a pair of stable Stokes elements satisfying the discrete inf-sup condition: there exists a constant $\beta_\ttd>0$, independent of $h$, such that 
\begin{equation}\label{discrete-inf-sup}
\sup_{\0\neq \bv_h\in \bH^{\bu}_h} \frac{\ds\int_{\Omega} q_h\,\div(\bv_h) }{\|\bv_h\|_{\bH^{1}(\Omega)}}
\,\geq\, \beta_\ttd\,\|q_h\|_{\L^2(\Omega)} \quad \forall\,q_h\in \H^p_h\,.
\end{equation}
We refer the reader to \cite{BBF2013} and \cite{Brezzi-Fortin} for examples of stable Stokes elements. To simplify the presentation, we focus on Taylor--Hood \cite{th1973} finite elements for velocity and pressure, and continuous piecewise polynomials spaces for vorticity.
Given an integer $l\geq 0$ and a subset $S$ of $\R^d$, we denote by $\rP_l(S)$
the space of polynomials of total degree at most $l$ defined on $S$. For any $k\geq 1$, we consider:
\begin{align}\label{eq:Taylor-Hood}
\bH^{\bu}_h  &:= \Big\{\bv_h\in [C(\ov{\Omega})]^d :\quad \bv_h|_K \in [\rP_{k+1}(K)]^d 
\quad \forall\,K \in \cT_h \Big\}\cap \bH_0^1(\Omega) \,, \nonumber \\[1ex]
\H^p_h &:= \Big\{q_h\in C(\ov{\Omega}) :\quad q_h|_K \in \rP_k(K) \quad \forall\,K\in \cT_h \Big\}\cap \L_0^2(\Omega) \,, \\[1ex]
\bH^{\bomega}_h &:= \Big\{\bomega_h\in [C(\ov{\Omega})]^{d(d-1)/2} :\quad \bomega_h|_K \in [\rP_k(K)]^{d(d-1)/2} \quad \forall\,K \in \cT_h \Big\} \,. \nonumber
\end{align}
It is well known that the pair $(\bH^{\bu}_h, \H^{p}_h)$ in \eqref{eq:Taylor-Hood} satisfies 
\eqref{discrete-inf-sup} (cf. \cite{Boffi1994}). 
We observe that similarly to \cite{acgmr2021} and \cite{accgry2023}, we can also consider discontinuous piecewise polynomials spaces for the vorticity, that is,
\begin{equation*}
\bH^{\bomega}_h := \Big\{ \bomega_h\in [\L^2(\Omega)]^{d(d-1)/2} :\quad 
\bomega_h|_K \in [\rP_k(K)]^{d(d-1)/2}  \quad \forall\,K \in \cT_h \Big\} \,.
\end{equation*}
In addition to the Taylor--Hood elements for the velocity and pressure, in the numerical experiments in Section \ref{sec:numerical-results} we also consider the classical MINI-element \cite[Sections 8.4.2, 8.6 and 8.7]{BBF2013}
and Crouzeix--Raviart elements with tangential jump penalization (see \cite{crouzeix73}  for the discrete inf-sup condition regarding the lowest-order case and, for instance, \cite{carstensen21} for cubic order).

Now, defining $\ubu_h:=(\bu_h,\bomega_h), \ubv_h:=(\bv_h,\bpsi_h)\in \bH^{\bu}_h\times \bH^{\bomega}_h$, 
the semidiscrete continuous-in-time problem associated with 
\eqref{eq:KVBF-variational-formulation} reads: 
Find $(\ubu_h,p_h) : [0,T]\to \big( \bH^{\bu}_h\times \bH^{\bomega}_h \big)\times \H^p_h$  
such that, for a.e. $t\in (0,T)$,
\begin{equation}\label{eq:discrete-weak-KVBF}
\begin{array}{llll}
\dfrac{\partial}{\partial\,t}\,[\cE(\ubu_h(t)),\ubv_h] +  [\cA_h(\bu_h(t))(\ubu_h(t)),\ubv_h] + [\cB(\ubv_h), p_h(t)]
& = & [\bF(t),\ubv_h] & \forall\, \ubv_h\in \bH^{\bu}_h\times\bH^{\bomega}_h,\\[2ex] 
-[\cB(\ubu_h(t)), q_h] & = & 0 & \forall\, q_h\in \H^{p}_h \,.
\end{array}
\end{equation}
Here, $\cA_h(\bz_h): \big( \bH^{\bu}_h\times \bH^{\bomega}_h \big)\to \big( \bH^{\bu}_h\times \bH^{\bomega}_h \big)'$ is the discrete version of $\cA(\bz)$ (with $\bz\in \bH^{\bu}_h$ in place of $\bz\in \bH^1_0(\Omega)$), which is defined by
\begin{equation}\label{eq:operator-Auh}
[\cA_h(\bz_h)(\ubu_h),\ubv_h] \,:=\, [\ba(\ubu_h),\ubv_h] \,+\, [\bc_h(\bz_h)(\ubu_h),\ubv_h] \,,
\end{equation}
where $\bc_h(\bz_h)$ is the well-known skew-symmetric convection form \cite{Temam1977}:
\begin{equation*}
[\bc_h(\bz_h)(\ubu_h),\ubv_h] := ((\nabla\bu_h)\bz_h,\bv_h)_{\Omega} + \frac{1}{2}\,(\div(\bz_h)\bu_h,\bv_h)_{\Omega} \,,
\end{equation*}
for all $\bu_h, \bv_h, \bz_h\in \bH^{\bu}_h$.
Observe that integrating by parts, similarly to \eqref{eq:null-identity-cz}, there holds
\begin{equation}\label{eq:null-identity-czh}
[\bc_h(\bz_h)(\ubv_h),\ubv_h] = 0 \quad \forall\,\bz_h\in \bH^\bu_h \qan \forall\,\ubv_h\in \bH^{\bu}_h\times \bH^{\bomega}_h \,.
\end{equation}
As initial condition we take $(\ubu_{h,0},p_{h,0})=((\bu_{h,0},\bomega_{h,0}),p_{h,0})$ 
to be a suitable approximations of $(\ubu_0,p_0)=((\bu_0,\bomega_0),0)$, the solution of a slight modification of \eqref{eq:initial-condition-full-problem}, that is, we chose $(\ubu_{h,0}, p_{h,0})$ solving 
\begin{equation}\label{eq:discrete-initial-condition-full-problem}
\begin{array}{lll}
(\nabla\bu_{h,0},\nabla\bv_{h})_\Omega + [\cA_h(\bu_{h,0})(\ubu_{h,0}), \ubv_h] + [\cB'(p_{h,0}),\ubv_h] & = & [\bF_0, \ubv_h] + (\nabla\bu_0,\nabla\bv_{h})_\Omega \,, \\[2ex]
-\,[\cB(\ubu_{h,0}),q_h] & = & 0 \,,
\end{array}
\end{equation}
for all $\ubv_h\in \bH^{\bu}_h\times \bH^{\bomega}_h$ and $q_h\in \H^p_h$.
The well-posedness of \eqref{eq:discrete-initial-condition-full-problem} follows from the discrete inf-sup condition \eqref{eq:discrete-inf-sup-condition} and similar arguments to the proof of Lemma \ref{lem:resolvent-system}.
Alternatively, we can proceed as in \cite[eq. (4.4)]{accgry2023} and apply a fixed-point strategy in conjunction with \cite[Theorem 3.1]{cgo2021} to ensure existence and uniqueness of \eqref{eq:discrete-initial-condition-full-problem}.

Next, we introduce the discrete kernel of $\cB$, that is, 
\begin{equation}\label{eq:discrete-kernel-B}
\bV_h := \bK_h\times \bH^{\bomega}_h\,,\quad \mbox{where}\quad
\bK_h = \Big\{ \bv_h\in \bH^{\bu}_h :\quad (q_h,\div(\bv_h))_{\Omega} \,=\, 0 \quad 
\forall\, q_h\in \H^p_h \Big\} \,.
\end{equation}
Then, we can introduce the reduced problem: Given $\f:[0,T]\to \bH^{-1}(\Omega)$,
find $\ubu_h : [0,T]\to \bK_h\times \bH^{\bomega}_h$  
such that, for a.e. $t\in (0,T)$,
\begin{equation}\label{eq:discrete-weak-KVBF-reduced}
\dfrac{\partial}{\partial\,t}\,[\cE(\ubu_h(t)),\ubv_h] +  [\cA_h(\bu_h(t))(\ubu_h(t)),\ubv_h] 
= [\bF(t),\ubv_h] \quad \forall\, \ubv_h\in \bK_h\times\bH^{\bomega}_h \,,
\end{equation}
which, using \eqref{discrete-inf-sup} and similarly to Lemma \ref{lem:equivalent-problems}, is equivalent to \eqref{eq:discrete-weak-KVBF}.
As a preliminary initial condition for \eqref{eq:discrete-weak-KVBF-reduced} we take $\ubu_{h,0}:=(\bu_{h,0},\bomega_{h,0})$ 
to be solution of the reduced problem associated to \eqref{eq:discrete-initial-condition-full-problem}, that is, we chose $\ubu_{h,0}$ solving 
\begin{equation}\label{eq:pre-discrete-initial-condition}
(\nabla\bu_{h,0},\nabla\bv_{h})_\Omega + [\cA_h(\bu_{h,0})(\ubu_{h,0}),\ubv_h] = [\bF_0,\ubv_h] + (\nabla\bu_0,\nabla\bv_{h})_\Omega \quad \forall\, \ubv_h\in \bK_h\times\bH^{\bomega}_h \,,
\end{equation}
with $\bF_0\in \bH^{-1}(\Omega)\times\{\0\}$ being the right-hand side of \eqref{eq:initial-condition-problem}. 
Notice that the well-posedness of problem \eqref{eq:pre-discrete-initial-condition} 
follows from similar arguments to the proof of Lemma \ref{lem:resolvent-system}. 
In addition, taking $\ubv_h=\ubu_{h,0}$ in \eqref{eq:pre-discrete-initial-condition},
we deduce from the definition of the operator $\cA_h(\bu_{h,0})$ (cf. \eqref{eq:operator-Auh}),
the identity \eqref{eq:null-identity-czh},
and the continuity bound of $\bF_0$ (cf. \eqref{eq:continuity-bound-F0}), that
there exists a constant $\wh{C}_{0}>0$ depending only on $|\Omega|$, $\|\bi_\rho\|$, $\|\bi_4\|$, $\nu$, $\tD$, and $\tF$, and hence independent of $h$, such that 
\begin{equation}\label{eq:uh0-wh0-bound-u0-w0}
\|\bu_{h,0}\|^2_{\bH^1(\Omega)} 
+ \|\bu_{h,0}\|^\rho_{\bL^\rho(\Omega)}
+ \|\bomega_{h,0}\|^2_{\bL^2(\Omega)} 
\,\leq\, \wh{C}_{0}\,\Big\{ 
\|\bu_0\|^2_{\bH^1(\Omega)} 
+ \|\bu_0\|^4_{\bH^1(\Omega)}  
+ \|\bu_0\|^{2(\rho-1)}_{\bH^1(\Omega)}
\Big\}\,.
\end{equation}
Thus, from \eqref{eq:pre-discrete-initial-condition} we deduce a initial condition for \eqref{eq:discrete-weak-KVBF-reduced}, that is, $\ubu_{h,0}:=(\bu_{h,0},\bomega_{h,0})$, the solution of
\begin{equation}\label{eq:discrete-initial-condition}
[\cA_h(\bu_{h,0})(\ubu_{h,0}),\ubv_h] = [\bF_{h,0},\ubv_h] \quad \forall\, \ubv_h\in \bK_h\times\bH^{\bomega}_h\,,
\end{equation}
with $\bF_{h,0}=(\f_{h,0},\0)$ and
$(\f_{h,0},\bv_h)_\Omega := (\f_0,\ubv_h)_\Omega + (\nabla\bu_0,\nabla\bv_{h})_\Omega - (\nabla\bu_{h,0},\nabla\bv_{h})_\Omega$,
which, thanks to \eqref{eq:continuity-bound-F0} and \eqref{eq:uh0-wh0-bound-u0-w0}, yields
\begin{equation}\label{eq:assumption-fh0}
\big| (\f_{h,0},\bv_h)_\Omega \big| \,\leq\, 
C_{\ttd,0}\,\Big\{ \|\bu_0\|_{\bH^1(\Omega)}
+ \|\bu_0\|^2_{\bH^1(\Omega)} 
+ \|\bu_0\|^{\rho-1}_{\bH^{1}(\Omega)} \Big\}\,\|\bv_h\|_{\bH^1(\Omega)} \,,
\end{equation} 
with $C_{\ttd,0}>0$, depending only on $|\Omega|$, $\|\bi_\rho\|$, $\|\bi_4\|$, $\nu$, $\tD$, and $\tF$.
Thus, $\bF_{h,0}\in \bH^{-1}(\Omega)\times \{ \0 \}$.
We observe that this choice is necessary to guarantee that the discrete initial data 
is compatible in the sense of Lemma \ref{lem:sol0-reduced-problem}, which is needed for 
the application of Theorem \ref{thm:well-posed-parabolic-problem}. 

In this way, the well-posedness of \eqref{eq:discrete-weak-KVBF-reduced} (equivalently of \eqref{eq:discrete-weak-KVBF}), 
follows analogously to its continuous counterpart
provided in Theorem~\ref{thm:well-posed-result-reduced}.
More precisely, we first address the discrete counterparts of Lemmas \ref{lem:continuity-Az} and \ref{lem:coercivity-monotonicity-Az}, whose proofs, being almost verbatim of the continuous ones, are omitted.
\begin{lem}\label{lem:discrete-properties-A}
Let $\bz_h\in \bK_h$ (cf. \eqref{eq:discrete-kernel-B}).
Then, with the same constant $\gamma_{\tKV}$ defined in \eqref{eq:coercivity-of-EAz}, there holds
\begin{equation}\label{eq:coercivity-of-EAzh}
[(\cE +\cA_h(\bz_h))(\ubv_h),\ubv_h] 
\,\geq\, \gamma_{\tKV}\,\|\bv_h\|^2_{\bH^1(\Omega)} + \nu\,\|\bpsi_h\|^2_{\bL^2(\Omega)},
\end{equation}
and
\begin{equation}\label{eq:strong-monotonicity-of-EAzh}
[(\cE + \cA_h(\bz_h))(\ubu_h) - (\cE + \cA_h(\bz_h))(\ubv_h), \ubu_h - \ubv_h] 
\,\geq\, \gamma_{\tKV}\,\|\bu_h - \bv_h\|^2_{\bH^1(\Omega)} + \nu\,\|\bomega_h - \bpsi_h\|^2_{\bL^2(\Omega)} \,,
\end{equation}
for all $\ubu_h = (\bu_h,\bomega_h)$, $\ubv_h = (\bv_h,\bpsi_h)\in \bH^{\bu}_h\times \bH^{\bomega}_h$.
In addition, the operator $\cE+\cA_h:(\bH^{\bu}_h\times \bH^{\bomega}_h)\to (\bH^{\bu}_h\times \bH^{\bomega}_h)'$ is continuous in the sense of \eqref{eq:continuity-of-Az}, but with the constant $$L_{\tKV,\ttd} = \max\left\{ 2\,\max\{ 1+\tD, \kappa^2, \nu \}, \|\bi_4\|^2\left( 1 + \frac{\sqrt{d}}{2} \right), 2^{\rho-3}\,\tF\,\|\bi_\rho\|^\rho\,c_\rho \right\}.$$
\end{lem}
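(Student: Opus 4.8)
The plan is to mirror the proofs of Lemmas~\ref{lem:continuity-Az} and \ref{lem:coercivity-monotonicity-Az} almost line by line, the only genuine difference being the treatment of the skew-symmetric convection form $\bc_h$ in place of $\bc$. I would begin with coercivity and strong monotonicity. For any $\bz_h\in \bK_h$ and $\ubv_h=(\bv_h,\bpsi_h)\in \bH^{\bu}_h\times\bH^{\bomega}_h$, I would expand $[(\cE+\cA_h(\bz_h))(\ubv_h),\ubv_h]$ exactly as in \eqref{eq:coercivity-1}, splitting it into the contributions of $\cE$, $\ba$, and $\bc_h(\bz_h)$. The crucial observation is that the identity \eqref{eq:null-identity-czh} holds for \emph{every} $\bz_h\in\bH^{\bu}_h$—not merely for discretely divergence-free fields—precisely because of the stabilizing term $\tfrac12(\div(\bz_h)\bu_h,\bv_h)_\Omega$. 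Hence the convection contribution vanishes just as in the continuous setting, and the surviving terms $(1+\tD)\|\bv_h\|^2_{\bL^2(\Omega)}+\kappa^2\|\nabla\bv_h\|^2_{\bbL^2(\Omega)}+\tF\|\bv_h\|^\rho_{\bL^\rho(\Omega)}+\nu\|\bpsi_h\|^2_{\bL^2(\Omega)}$ yield \eqref{eq:coercivity-of-EAzh} with the same $\gamma_{\tKV}=\min\{1+\tD,\kappa^2\}$.

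For strong monotonicity I would repeat the computation leading to \eqref{eq:strong-monotonicity-1}, exploiting that $\cE$ and $\bc_h(\bz_h)$ are linear in their last argument, and then invoke the monotonicity bound \eqref{eq:strict-monotonicity-BF-term} for the Forchheimer difference $(|\bu_h|^{\rho-2}\bu_h-|\bv_h|^{\rho-2}\bv_h,\bu_h-\bv_h)_\Omega$. Since the convection difference $[\bc_h(\bz_h)(\ubu_h-\ubv_h),\ubu_h-\ubv_h]$ again vanishes by \eqref{eq:null-identity-czh}, this produces \eqref{eq:strong-monotonicity-of-EAzh} with the same constant. Neither step relies on $\bz_h$ belonging to a divergence-free subspace, so these arguments are genuinely verbatim.

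The only place where the constant changes is the continuity estimate. Here I would follow \eqref{eq:continuity-bound-1}--\eqref{eq:continuity-bound-2}, but the bound on the convection difference must now account for the extra summand in $\bc_h$. Writing
\[
[\bc_h(\bz_h)(\ubu_h-\ubv_h),\ubw_h]
= ((\nabla(\bu_h-\bv_h))\bz_h,\bw_h)_\Omega
+ \tfrac12\,(\div(\bz_h)(\bu_h-\bv_h),\bw_h)_\Omega,
\]
I would bound the first summand exactly as in \eqref{eq:continuity-cz-u}, which contributes the factor $\|\bi_4\|^2\|\bz_h\|_{\bH^1(\Omega)}$. For the second summand I would use $\|\div(\bz_h)\|_{\L^2(\Omega)}\le\sqrt{d}\,\|\bz_h\|_{\bH^1(\Omega)}$ together with Hölder's inequality in the $\bL^2$--$\bL^4$--$\bL^4$ splitting and the injection $\bi_4$ applied twice, producing the additional factor $\tfrac{\sqrt{d}}{2}\,\|\bi_4\|^2\|\bz_h\|_{\bH^1(\Omega)}$. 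Combining the two, the coefficient $\|\bi_4\|^2$ of the continuous estimate is replaced by $\|\bi_4\|^2\bigl(1+\tfrac{\sqrt{d}}{2}\bigr)$, which is exactly the modification recorded in $L_{\tKV,\ttd}$; the Forchheimer and linear contributions are unchanged and retain their constants.

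There is no real obstacle in this argument, since its structure is identical to that of the continuous proofs. The only point requiring care is the bookkeeping of the extra skew-symmetric term: one must verify that it vanishes in the coercivity and monotonicity computations (via \eqref{eq:null-identity-czh}) while, in the continuity estimate, it inflates only the $\bi_4$-dependent part of the constant. Once this is checked, the three bounds follow with the stated constants $\gamma_{\tKV}$ and $L_{\tKV,\ttd}$.
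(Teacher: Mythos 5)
Your proposal is correct and follows exactly the route the paper intends: the paper omits this proof as being ``almost verbatim'' of Lemmas~\ref{lem:continuity-Az} and \ref{lem:coercivity-monotonicity-Az}, and your argument supplies precisely that adaptation, correctly observing that \eqref{eq:null-identity-czh} kills the convection term in the coercivity and monotonicity computations and that the extra $\tfrac12(\div(\bz_h)\cdot,\cdot)_\Omega$ summand only inflates the $\|\bi_4\|^2$ factor to $\|\bi_4\|^2\bigl(1+\tfrac{\sqrt{d}}{2}\bigr)$ in the continuity bound, matching the stated $L_{\tKV,\ttd}$.
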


We continue with the discrete inf-sup condition of $\cB$.
\begin{lem}\label{lem:discrete-inf-sup}
It holds that
\begin{equation}\label{eq:discrete-inf-sup-condition}
\sup_{\0\ne \ubv_h\in\bH^{\bu}_h\times\bH^{\bomega}_h}\dfrac{[\cB(\ubv_h), q_h]}{\|\ubv_h\|}
\,\geq\, \beta_\ttd\,\|q_h\|_{\L^2(\Omega)} \quad \forall\, q_h\in \H^{p}_h \,,
\end{equation}
where $\beta_\ttd$ is the inf-sup constant from \eqref{discrete-inf-sup}.
\end{lem}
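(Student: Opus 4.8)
The plan is to reduce the discrete inf-sup condition for $\cB$ to the assumed discrete Stokes inf-sup condition \eqref{discrete-inf-sup}, exploiting the fact that the form $[\cB(\ubv_h),q_h]$ does not see the vorticity component of $\ubv_h$. Recalling the definition \eqref{eq:operator-B}, for $\ubv_h=(\bv_h,\bpsi_h)$ we have $[\cB(\ubv_h),q_h] = -(q_h,\div(\bv_h))_{\Omega}$, which is independent of $\bpsi_h$, while the denominator satisfies $\|\ubv_h\| = \big(\|\bv_h\|^2_{\bH^1(\Omega)} + \|\bpsi_h\|^2_{\bL^2(\Omega)}\big)^{1/2} \geq \|\bv_h\|_{\bH^1(\Omega)}$.

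First I would bound the supremum from below by restricting to test functions of the special form $(\bv_h,\0)$ with $\0\neq\bv_h\in \bH^{\bu}_h$. For such functions the norm collapses to $\|(\bv_h,\0)\| = \|\bv_h\|_{\bH^1(\Omega)}$, so
\begin{equation*}
\sup_{\0\ne \ubv_h\in\bH^{\bu}_h\times\bH^{\bomega}_h}\dfrac{[\cB(\ubv_h), q_h]}{\|\ubv_h\|}
\,\geq\, \sup_{\0\ne \bv_h\in\bH^{\bu}_h}\dfrac{-(q_h,\div(\bv_h))_{\Omega}}{\|\bv_h\|_{\bH^1(\Omega)}}\,.
\end{equation*}

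Next, since the admissible set is symmetric under $\bv_h\mapsto -\bv_h$ and this sign flip leaves the denominator unchanged while turning the numerator into $(q_h,\div(\bv_h))_{\Omega} = \int_{\Omega} q_h\,\div(\bv_h)$, the right-hand side equals precisely the velocity--pressure Rayleigh quotient appearing in \eqref{discrete-inf-sup}. Invoking the assumed discrete Stokes inf-sup condition \eqref{discrete-inf-sup} for the pair $(\bH^{\bu}_h,\H^p_h)$ then yields the lower bound $\beta_\ttd\,\|q_h\|_{\L^2(\Omega)}$, giving \eqref{eq:discrete-inf-sup-condition} with the very same constant $\beta_\ttd$.

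I do not anticipate any genuine obstacle here: the entire content is the observation that enriching the velocity--pressure pair with the vorticity unknown $\bpsi_h$ neither alters the form $\cB$ nor decreases the denominator, so the discrete inf-sup constant is inherited verbatim from the underlying stable Stokes pair. The only minor bookkeeping is the sign convention in \eqref{eq:operator-B}, which is harmless inside the supremum.
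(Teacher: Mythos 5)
Your argument is correct and is exactly the reasoning the paper compresses into its one-line proof (``the statement follows directly from \eqref{discrete-inf-sup}''): restrict to test functions $(\bv_h,\0)$, observe that $\cB$ ignores the vorticity component while the product norm collapses to $\|\bv_h\|_{\bH^1(\Omega)}$, fix the sign via $\bv_h\mapsto-\bv_h$, and invoke \eqref{discrete-inf-sup}. No gaps; the constant $\beta_\ttd$ is indeed inherited verbatim.
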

\begin{proof}
The statement follows directly from \eqref{discrete-inf-sup}.
\end{proof}

We are now in a position to establish the semi-discrete continuous in time analogue of Theorems \ref{thm:well-posed-result-reduced} and \ref{thm:stability-bound}.
To that end, we first introduce the ball of $\bK_h$ given by
\begin{equation}\label{eq:discrete-ball-W}
\bW_{\ttd} := \Big\{ \bz_h\in \bK_h :\quad \|\bz_h\|_{\bH^1(\Omega)} 
\,\leq\, \frac{1}{\gamma_{\tKV}}\|\wh{\f}\|_{\bH^{-1}(\Omega)} \, \Big\} \,.
\end{equation}
The aforementioned result is stated now.
\begin{thm}\label{thm:well-posed-discrete-result}		
For each compatible initial data $(\ubu_{h,0},p_{h,0}):= ((\bu_{h,0},\bomega_{h,0}),p_{h,0})$ satisfying \eqref{eq:discrete-initial-condition} and $\f\in \W^{1,1}(0,T;\bH^{-1}(\Omega))$, 
there exists a unique solution to \eqref{eq:discrete-weak-KVBF}, $(\ubu_h,p_h) = ((\bu_h,\bomega_h),p_h):[0,T]\to (\bH^{\bu}_h\times \bH^{\bomega}_h)\times \H^p_h$, with $\bu_h\in \W^{1,\infty}(0,T;\bH^{\bu}_h)$
and $(\bu_h(0),\bomega_h(0)) = (\bu_{h,0},\bomega_{h,0})$. 
Moreover, there exists a constant $\wh{C}_{\tt KVr}>0$ depending only on $|\Omega|, \|\bi_\rho\|, \|\bi_4\|, \nu, \tD, \tF$, and $\kappa$, such that
\begin{equation}\label{eq:discrete-stability-bound}
\begin{array}{l}
\|\bu_h\|_{\L^\infty(0,T;\bH^1(\Omega))}
+ \|\bu_h\|_{\L^{2}(0,T;\bL^2(\Omega))} 
+ \|\bomega_h\|_{\L^{2}(0,T;\bL^2(\Omega))} \\[2ex]
\ds\qquad \,\leq\, \wh{C}_{\tt KVr}\,\sqrt{\exp(T)}\,\Big( \|\f\|_{\L^2(0,T;\bH^{-1}(\Omega))}  
+ \|\bu_0\|_{\bH^1(\Omega)} 
+ \|\bu_0\|^2_{\bH^1(\Omega)} 
+ \|\bu_0\|^{\rho-1}_{\bH^1(\Omega)} 
\Big) \,,
\end{array}
\end{equation}	
and a constant $\wh{C}_{\tt KVp}>0$ depending only on $|\Omega|, \|\bi_\rho\|, \|\bi_4\|, \nu, \tD, \tF, \kappa$, and $\beta_\ttd$, such that
\begin{equation}\label{eq:discrete-stability-bound-p}
\begin{array}{l}
\|p_h\|_{\L^{2}(0,T;\L^2(\Omega))} \\[2ex]
\ds\qquad
\leq\, \wh{C}_{\tt KVp}\,
\sum_{j\in \{ 2,3,\rho \}} \left\{ \sqrt{\exp(T)}\,\Big( \|\f\|_{\L^2(0,T;\bH^{-1}(\Omega))}  
+ \|\bu_0\|_{\bH^1(\Omega)} 
+ \|\bu_0\|^2_{\bH^1(\Omega)} 
+ \|\bu_0\|^{\rho-1}_{\bH^1(\Omega)} \Big) \right\}^{j-1} \,.
\end{array}
\end{equation}
\end{thm}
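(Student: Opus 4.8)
The plan is to transcribe, essentially verbatim, the continuous arguments of Theorems~\ref{thm:well-posed-result-reduced} and \ref{thm:stability-bound} into the discrete setting, since every structural property they rely on now has a discrete counterpart. First I would cast the reduced problem \eqref{eq:discrete-weak-KVBF-reduced} into the abstract form \eqref{eq:parabolic-problem} of Theorem~\ref{thm:well-posed-parabolic-problem}, taking $E := \bK_h \times \bH^{\bomega}_h$, $\cN := \cE$, $\cM := \cA_h(\bu_h)$, and letting $E'_b$ be the dual of $E$ equipped with the seminorm $\|\ubv_h\|_{\cE} = \|\bv_h\|_{\bH^1(\Omega)}$ induced by $\cE$ (degenerate in the vorticity component, which is exactly why Showalter's framework is the natural tool). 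Since $\cE$ is linear, symmetric and monotone (cf.~\eqref{eq:continuity-monotonicity-E}), $\cN$ inherits these properties; and because $\cA_h(\bz_h)$ is strongly monotone for every $\bz_h \in \bK_h$ (cf.~\eqref{eq:strong-monotonicity-of-EAzh}), the relation $\cM$ is monotone.

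The range condition $Rg(\cN + \cM) = E'_b$ is the crucial hypothesis, and I would verify it exactly as in the continuous case. For fixed data $(\wh{\f},\0) \in E'_b$ I introduce the discrete fixed-point map $\cJ_h : \bK_h \to \bK_h$ sending $\bz_h$ to the velocity component of the solution of the linearized problem $[(\cE + \cA_h(\bz_h))(\ubu_h),\ubv_h] = [\wh{\bF},\ubv_h]$. Lemma~\ref{lem:discrete-properties-A} supplies continuity, coercivity and strong monotonicity of $\cE + \cA_h(\bz_h)$, so Browder--Minty (Theorem~\ref{thm:Browder-Minty-theorem}) makes $\cJ_h$ well defined and yields an a priori bound analogous to \eqref{eq:u-omega-bound-f}; a Lipschitz estimate analogous to \eqref{eq:aux-continuity-T} then shows that $\cJ_h$ maps the ball $\bW_\ttd$ (cf.~\eqref{eq:discrete-ball-W}) continuously into itself, and Schauder (or simply Brouwer, the spaces being finite-dimensional) furnishes a fixed point, i.e.\ a solution of the discrete resolvent system, giving $Rg(\cN + \cM) = E'_b$. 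Compatibility of the initial datum $\ubu_{h,0}$ is precisely the content of \eqref{eq:discrete-initial-condition}, which guarantees $\cM(\ubu_{h,0}) = \bF_{h,0} \in \bH^{-1}(\Omega) \times \{\0\} = E'_b$. Theorem~\ref{thm:well-posed-parabolic-problem} then delivers existence of $\ubu_h$ with $\bu_h \in \W^{1,\infty}(0,T;\bH^{\bu}_h)$ and $\bu_h(0) = \bu_{h,0}$.

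For the stability bound \eqref{eq:discrete-stability-bound} I test \eqref{eq:discrete-weak-KVBF-reduced} with $\ubv_h = \ubu_h$, annihilate the convective term via the skew-symmetry identity \eqref{eq:null-identity-czh}, and reproduce \eqref{eq:stability-0}--\eqref{eq:stability-3} using the coercivity \eqref{eq:coercivity-of-EAzh} together with Cauchy--Schwarz and Young; integrating in time and applying Gr\"onwall gives the result. The only departure from the continuous argument is that the initial velocity norm $\|\bu_h(0)\|_{\bH^1(\Omega)} = \|\bu_{h,0}\|_{\bH^1(\Omega)}$ is now controlled by \eqref{eq:uh0-wh0-bound-u0-w0} instead of directly by $\|\bu_0\|_{\bH^1(\Omega)}$, which is exactly what produces the extra terms $\|\bu_0\|^2_{\bH^1(\Omega)} + \|\bu_0\|^{\rho-1}_{\bH^1(\Omega)}$ on the right-hand side. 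Uniqueness follows verbatim from the continuous case: subtracting two solutions, testing with their difference, invoking strong monotonicity \eqref{eq:strong-monotonicity-of-EAzh} and the continuity of $\bc_h$, and closing with Gr\"onwall using the just-established $\L^\infty(0,T;\bH^1(\Omega))$ bound on $\bu_h$. Finally, letting $t \to 0$ in the equations of \eqref{eq:discrete-weak-KVBF} without time derivative and comparing with \eqref{eq:discrete-initial-condition} yields $\bomega_h(0) = \bomega_{h,0}$.

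For the pressure bound \eqref{eq:discrete-stability-bound-p} I use the discrete inf-sup condition \eqref{eq:discrete-inf-sup-condition} to control $\|p_h\|_{\L^2(\Omega)}$ by the residual of the first equation of \eqref{eq:discrete-weak-KVBF}, mirroring \eqref{eq:stability-p-inf-sup}; the remaining ingredient is a bound on $\|\partial_t\,\bu_h\|_{\L^2(0,T;\bH^1(\Omega))}$, obtained by differentiating the $\bpsi_h$- and $q_h$-equations in time, testing with $((\partial_t\,\bu_h,\bomega_h),p_h)$, and arguing as in \eqref{eq:stability-4-b}--\eqref{eq:stability-5}, after which \eqref{eq:discrete-stability-bound} closes the estimate. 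I expect the main obstacle to be bookkeeping rather than any conceptual difficulty: because the discrete initial datum is defined through the projection-type problem \eqref{eq:discrete-initial-condition} and is \emph{not} the exact curl of $\bu_0$, the bound \eqref{eq:uh0-wh0-bound-u0-w0} carries higher powers of $\|\bu_0\|_{\bH^1(\Omega)}$, and these must be propagated consistently through both the velocity--vorticity and the pressure estimates to arrive at the stated, $h$-independent right-hand sides.
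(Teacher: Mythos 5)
Your proposal is correct and follows essentially the same route as the paper's own (much terser) proof: casting the reduced discrete problem into the Showalter framework, verifying the range condition via the fixed-point map on $\bW_\ttd$ using Lemma~\ref{lem:discrete-properties-A} and Browder--Minty, and then mimicking the continuous stability arguments with the initial datum controlled through \eqref{eq:uh0-wh0-bound-u0-w0}. The only (correct and harmless) embellishment is your observation that Schauder reduces to Brouwer in the finite-dimensional setting.
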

\begin{proof} 
According to Lemma \ref{lem:discrete-properties-A}, the discrete inf-sup condition for $\cB$
provided by \eqref{eq:discrete-inf-sup-condition} in Lemma \ref{lem:discrete-inf-sup}, a fixed-point approach as the one used in Lemma \ref{lem:resolvent-system}, but now with $\bW_{\ttd}$ (cf. \eqref{eq:discrete-ball-W}), 
and considering that $(\ubu_{h,0},p_{h,0})$ satisfies \eqref{eq:discrete-initial-condition},  
the proof of existence and uniqueness
of solution of \eqref{eq:discrete-weak-KVBF-reduced} (equivalently of \eqref{eq:discrete-weak-KVBF}) with $\bu_h\in \W^{1,\infty}(0,T;\bH^{\bu}_h)$ and $\bu_h(0)=\bu_{h,0}$, follows similarly to the proof of Theorem \ref{thm:well-posed-result-reduced} by applying Theorem \ref{thm:well-posed-parabolic-problem}.
Moreover, from the discrete version of \eqref{eq:omega-t-0}, we deduce that $\bomega_h(0) = \bomega_{h,0}$.
	
On the other hand, mimicking the steps followed in the proof of Theorems \ref{thm:well-posed-result-reduced} and \ref{thm:stability-bound}, 
we obtain, respectively, the discrete versions of \eqref{eq:stability-0}--\eqref{eq:stability-3} and \eqref{eq:stability-p-inf-sup}--\eqref{eq:stability-5}.
Then, using the fact that $(\bu_h(0),\bomega_h(0)) = (\bu_{h,0},\bomega_{h,0})$ and \eqref{eq:uh0-wh0-bound-u0-w0}, we derive \eqref{eq:discrete-stability-bound} and \eqref{eq:discrete-stability-bound-p}, thus completing the proof.
\end{proof}

\subsection{Error analysis}

Now we derive suitable error estimates for the semidiscrete scheme \eqref{eq:discrete-weak-KVBF}.
To that end, we first recall that the discrete inf-sup condition of $\cB$ (cf. \eqref{eq:discrete-inf-sup-condition}), 
and a classical result on mixed methods (see, for instance \cite[eq. (2.89) in Theorem~2.6]{Gatica}) 
ensure the existence of a constant $C>0$, independent of $h$, such that:
\begin{equation}\label{eq:infimo-Vh-MhQh}
\inf_{\ubv_h\in \bV_h} \|\ubu - \ubv_h\| 
\,\leq\, C\,\inf_{\ubv_h\in \bH^{\bu}_h\times \bH^{\bomega}_h} \|\ubu - \ubv_h\|\,.
\end{equation}

Next, in order to obtain the theoretical rates of convergence for the discrete scheme 
\eqref{eq:discrete-weak-KVBF}, we recall the approximation properties of 
the finite element subspaces $\bH^{\bu}_h, \bH^{\bomega}_h$, and $\H^{p}_h$ 
(cf. \eqref{eq:Taylor-Hood}) that
can be found in \cite{BBF2013}, \cite{Brezzi-Fortin}, and \cite{Ern-Guermond}.
Assume that $\bu\in \bH^{1+s}(\Omega), \bomega\in [\H^{s}(\Omega)]^{d(d-1)/2}$, and $p\in \H^s(\Omega)$, for some $s\in (1/2,k+1]$.
Then there exists $C>0$, independent of $h$, such that
\begin{align}
\inf_{\bv_h\in \bH^{\bu}_h} \|\bu - \bv_h\|_{\bH^1(\Omega)} 
& \,\leq\, C\,h^{s}\,\|\bu\|_{\bH^{1+s}(\Omega)} \,, \label{eq:approximation-u} \\
\inf_{\bpsi_h\in \bH^{\bomega}_h} \|\bomega - \bpsi_h\|_{\bL^2(\Omega)} 
& \,\leq\, C\,h^{s}\,\|\bomega\|_{\bH^{s}(\Omega)} \,, \label{eq:approximation-omega} \\
\inf_{q_h\in \H^p_h} \|p - q_h\|_{\L^2(\Omega)} 
&\,\leq\, C\,h^{s}\,\|p\|_{\H^{s}(\Omega)} \,. \label{eq:approximation-p}
\end{align}
Owing to \eqref{eq:infimo-Vh-MhQh} and \eqref{eq:approximation-u}--\eqref{eq:approximation-p},
it follows that, under an extra regularity assumption on the exact solution, there exist positive constants
$C(\ubu)$, $C(\partial_t\,\ubu)$,  $C(p)$, and $C(\partial_t\,p)$, depending on
$\bu, \bomega$ and $p$, respectively,
such that
\begin{equation}\label{eq:inf-approximation-properties}
\begin{array}{c}
\ds \inf_{\ubv_h\in \bV_h} \|\ubu - \ubv_h\| \,\leq\, C(\ubu)\,h^{s}\,,\quad
\inf_{\ubv_h\in \bV_h} \|\partial_t\,\ubu - \ubv_h\| \,\leq\, C(\partial_t\,\ubu)\,h^{s} \,, \\[3ex]
\ds \inf_{q_{h}\in \H^p_h} \|p - q_h\|_{\L^2(\Omega)} \,\leq\, C(p)\,h^{s}\,, \qan
\inf_{q_{h}\in \H^p_h} \|\partial_t\,p - q_h\|_{\L^2(\Omega)} \,\leq\, C(\partial_t\,p)\,h^{s} \,.
\end{array}
\end{equation}

In turn, in order to simplify the subsequent analysis, we write
$\be_{\ubu} = (\be_{\bu}, \be_{\bomega}) = (\bu - \bu_h, \bomega - \bomega_h)$ 
and $\be_{p} = p - p_h$.
Next, given arbitrary $\wh{\ubv}_h := (\wh{\bv}_h, \wh{\bpsi}_h):[0,T]\to \bV_h$ 
(cf. \eqref{eq:discrete-kernel-B}) and $\wh{q}_h:[0,T]\to \H^p_h$, 
as usual, we shall decompose the errors into
\begin{equation}\label{eq:error-decomposition}
\be_{\ubu} = \bdelta_{\ubu} + \bbeta_{\ubu}
= (\bdelta_{\bu}, \bdelta_{\bomega}) + (\bbeta_{\bu}, \bbeta_{\bomega})\,,\quad
\be_{p} = \bdelta_{p} + \bbeta_{p}\,,
\end{equation}
with
\begin{equation}\label{eq:delta-eta-definition}
\begin{array}{c}
\ds \bdelta_{\bu} = \bu - \wh{\bv}_h\,,\quad 
\bdelta_{\bomega} = \bomega - \wh{\bpsi}_h\,,\quad 
\bdelta_{p} = p - \wh{q}_h\,, \\ [2ex]
\ds \bbeta_{\bu} = \wh{\bv}_h - \bu_h\,,\quad
\bbeta_{\bomega} = \wh{\bpsi}_h - \bomega_h\,,\quad
\bbeta_{p} = \wh{q}_h - p_h\,.
\end{array}
\end{equation}
In addition, we stress for later use that
for each $\ubv_h :[0,T]\to \bV_h$ (cf. \eqref{eq:discrete-kernel-B}) it holds that
$\partial_t\,\ubv_h(t) \in \bV_h$. In fact, given $(\ubv_h, q_h):[0,T]\to \bV_h\times \H^p_{h}$, after simple 
algebraic computations, we obtain
\begin{equation}\label{eq:dt-vh-discrete-kernel}
[\cB(\partial_t\,\ubv_h),q_h] 
\,=\, \partial_t\big( [\cB(\ubv_h),q_h] \big) - [\cB(\ubv_h),\partial_t\,q_h]
\,=\, 0 \,,
\end{equation}
where, the latter is obtained by observing that $\partial_t\,q_h(t)\in \H^p_{h}$.

Finally, since the exact solution $\bu\in \bH^1_0(\Omega)$ satisfies $\div(\bu)=0$ in $\Omega$, we have
\begin{equation*}
[\cA_h(\bu)(\ubu),\ubv_h] \,=\, [\cA(\bu)(\ubu),\ubv_h] \quad \forall\,\ubv_h\in \bH^{\bu}_h\times \bH^{\bomega}_h \,.
\end{equation*}
In this way, by subtracting the discrete and continuous problems \eqref{eq:KVBF-variational-formulation}
and \eqref{eq:discrete-weak-KVBF}, respectively, we obtain the following error system:
\begin{equation}\label{eq:error-system}
\begin{array}{llll}
\ds \frac{\partial}{\partial\,t}\,[\cE(\be_{\ubu}),\ubv_h] + [\cA_h(\bu)(\ubu) - \cA_h(\bu_h)(\ubu_h),\ubv_h] + [\cB(\ubv_h),\be_{p}] & = & 0 & \forall\,\ubv_h\in \bH^{\bu}_h\times \bH^{\bomega}_h\,, \\ [2ex]
\ds [\cB(\be_{\ubu}),q_h] & = & 0 & \forall\,q_h\in \H^p_h\,.
\end{array}
\end{equation}
We now establish the main result of this section, namely, the theoretical rate of convergence of the semidiscrete scheme \eqref{eq:discrete-weak-KVBF}.
Note that optimal rates of convergences are obtained for all the unknowns.
\begin{thm}\label{thm:rate-of-convergence}
Let $((\bu,\bomega),p):[0,T]\to \big( \bH^1_0(\Omega)\times \bL^2(\Omega) \big)\times \L^2_0(\Omega)$ 
with $\bu\in \W^{1,\infty}(0,T;\bH^{-1}(\Omega))$ and 
$((\bu_h,\bomega_h),p_h):[0,T]\to \big( \bH^{\bu}_h\times \bH^{\bomega}_h \big)\times \H^p_h$ 
with $\bu_h\in \W^{1,\infty}(0,T;\bH^{\bu}_h)$ 
be the unique solutions of the continuous and semidiscrete problems 
\eqref{eq:KVBF-variational-formulation} and \eqref{eq:discrete-weak-KVBF}, respectively.
Assume further that there exists $s\in (1/2,k+1]$, such that 
$\bu\in \bH^{1+s}(\Omega),\,\bomega\in [\H^{s}(\Omega)]^{d(d-1)/2}$, and $p\in \H^{s}(\Omega)$.
Then, there exists $C(\ubu,p) > 0$ depending only on
$C(\ubu), C(\partial_t\,\ubu), C(p)$, $C(\partial_t\,p), \|\bi_\rho\|,  \|\bi_4\|, |\Omega|, \nu, \tD, \tF, \kappa, \beta_\ttd, T, \|\f\|_{\L^2(0,T;\bH^{-1}(\Omega))} $, and $\|\bu_0\|_{\bH^1(\Omega)}$, such that
\begin{align}\label{eq:rate-estimate-u-omega-p}
\|\be_{\bu}\|_{\L^\infty(0,T;\bH^1(\Omega))} 
& + \|\be_{\bu}\|_{\L^{2}(0,T;\bL^2(\Omega))}
+ \|\be_{\bomega}\|_{\L^2(0,T;\bL^2(\Omega))} \nonumber \\[2ex]
& + \|\be_{p}\|_{\L^2(0,T;\L^2(\Omega))}
\,\leq\, C(\ubu,p)\,\left( h^s + h^{s\,(\rho-1)} \right) \,.
\end{align}
\end{thm}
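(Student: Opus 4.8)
The plan is to derive the error bound via a standard energy argument applied to the error system \eqref{eq:error-system}, using the decomposition \eqref{eq:error-decomposition}--\eqref{eq:delta-eta-definition} and exploiting the discrete kernel structure. First I would test the first equation of \eqref{eq:error-system} with $\ubv_h = \bbeta_{\ubu} = (\bbeta_{\bu},\bbeta_{\bomega}) \in \bV_h$. Since $\bbeta_{\ubu} \in \bV_h$, the pressure term $[\cB(\bbeta_{\ubu}),\be_p]$ vanishes, and the time-derivative term becomes $\partial_t[\cE(\be_{\ubu}),\bbeta_{\ubu}]$. Writing $\be_{\ubu} = \bdelta_{\ubu} + \bbeta_{\ubu}$ throughout, the idea is to move the $\bdelta$-contributions (the projection errors, controlled by \eqref{eq:inf-approximation-properties}) to the right-hand side, leaving the coercive/monotone $\bbeta$-contributions on the left.

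The key structural identity to invoke is the strong monotonicity \eqref{eq:strong-monotonicity-of-EAzh} of $\cE + \cA_h(\bz_h)$ combined with the skew-symmetry \eqref{eq:null-identity-czh}. To handle the nonlinear convective term I would split
\[
[\cA_h(\bu)(\ubu) - \cA_h(\bu_h)(\ubu_h),\bbeta_{\ubu}]
= [\cA_h(\bu_h)(\ubu) - \cA_h(\bu_h)(\ubu_h),\bbeta_{\ubu}]
+ [\bc_h(\bu - \bu_h)(\ubu),\bbeta_{\ubu}] \,,
\]
so that the first bracket yields the coercive lower bound $\gamma_{\tKV}\|\bbeta_{\bu}\|^2_{\bH^1(\Omega)} + \nu\|\bbeta_{\bomega}\|^2_{\bL^2(\Omega)}$ after absorbing the $\bdelta$ part using the continuity bound in Lemma \ref{lem:discrete-properties-A} and the Forchheimer monotonicity \eqref{eq:strict-monotonicity-BF-term}. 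The second bracket, the genuinely nonlinear convective residual, is bounded using \eqref{eq:continuity-cz-z} together with the global stability bound \eqref{eq:discrete-stability-bound} on $\|\bu_h\|$ (and the regularity of $\bu$), giving a term of the form $C\,\|\be_{\bu}\|_{\bH^1(\Omega)}\|\bbeta_{\bu}\|_{\bH^1(\Omega)}$. After applying Young's inequality \eqref{eq:Young-inequality} to every cross term and re-expressing $\be_{\bu} = \bdelta_{\bu} + \bbeta_{\bu}$, I would obtain a differential inequality of the form
\[
\tfrac12\,\partial_t\big(\|\bbeta_{\bu}\|^2_{\bL^2(\Omega)} + \kappa^2\|\nabla\bbeta_{\bu}\|^2_{\bbL^2(\Omega)}\big)
+ c\big(\|\bbeta_{\bu}\|^2_{\bH^1(\Omega)} + \|\bbeta_{\bomega}\|^2_{\bL^2(\Omega)}\big)
\leq C\,\big(\|\bdelta_{\bu}\|^2_{\bH^1(\Omega)} + \|\partial_t\bdelta_{\bu}\|^2 + \|\bdelta_{\bomega}\|^2 + \cdots\big)\,,
\]
where the Forchheimer contribution introduces the extra projection term scaling like $h^{s(\rho-1)}$ through \eqref{eq:continuity-bound-2}.

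Integrating in time from $0$ to $t$ and applying Gr\"onwall's inequality then controls $\|\bbeta_{\bu}\|_{\L^\infty(0,T;\bH^1(\Omega))}$ and the $\L^2$-in-time norms of $\bbeta_{\bu}$ and $\bbeta_{\bomega}$ by the right-hand side, which via \eqref{eq:inf-approximation-properties} is of order $h^s + h^{s(\rho-1)}$; the initial error $\bbeta_{\ubu}(0)$ is controlled by the compatibility of the discrete initial data \eqref{eq:discrete-initial-condition} and the approximation of $\ubu_0$. Combining with the triangle inequality $\be_{\ubu} = \bdelta_{\ubu} + \bbeta_{\ubu}$ and \eqref{eq:inf-approximation-properties} yields the velocity and vorticity estimates. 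Finally, for the pressure I would use the discrete inf-sup condition \eqref{eq:discrete-inf-sup-condition}: from the first line of \eqref{eq:error-system} I bound $\|\bbeta_p\|_{\L^2(\Omega)}$ by the supremum over $\ubv_h \in \bH^{\bu}_h\times\bH^{\bomega}_h$ of the remaining terms (including $\partial_t\cE(\be_{\ubu})$, requiring the already-established bound on $\partial_t\bbeta_{\bu}$ obtainable from \eqref{eq:stability-5}-type arguments), then add $\|\bdelta_p\|$ via the triangle inequality. The main obstacle I anticipate is the nonlinear convective residual $[\bc_h(\be_{\bu})(\ubu),\bbeta_{\ubu}]$: it is not sign-definite and must be absorbed only after invoking the uniform-in-$h$ stability bound \eqref{eq:discrete-stability-bound} to keep the Gr\"onwall constant independent of $h$, and careful bookkeeping is needed so that the resulting exponential factor depends only on data and $T$ rather than on the discrete solution.
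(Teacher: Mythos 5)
Your proposal follows essentially the same route as the paper's proof: test the error system with $\bbeta_{\ubu}\in\bV_h$ so the pressure term drops, split the nonlinearity into a monotone part (handled by \eqref{eq:strong-monotonicity-of-EAzh} and \eqref{eq:strict-monotonicity-BF-term}) plus a $\bdelta$-consistency part and a convective residual $[\bc(\be_{\bu})(\cdot),\bbeta_{\ubu}]$ bounded via the uniform stability estimates, integrate and apply Gr\"onwall, recover the pressure from the discrete inf-sup condition after deriving the auxiliary bound on $\partial_t\bbeta_{\bu}$, and control $\bbeta_{\ubu}(0)$ from the discrete initial-condition problem. The only cosmetic difference is which velocity ($\bu$ versus $\bu_h$) you freeze in the convective splitting, which is immaterial; the plan is correct.
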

\begin{proof}
First, adding and subtracting suitable terms in the first equation of \eqref{eq:error-system}, 
with $\ubv_h = \bbeta_{\ubu} =(\bbeta_{\bu},\bbeta_{\bomega}):[0,T]\to \bV_h$ (cf. \eqref{eq:discrete-kernel-B}), 
proceeding as in \eqref{eq:strong-monotonicity-of-EAz}, and using the fact that 
$\bbeta_{\ubu}(t)\in \bV_h$, thus $[\cB(\bbeta_{\ubu}),\bbeta_{p}] = 0$, we deduce that
\begin{equation}\label{eq:first-error-estimate}
\begin{array}{l}
\ds \frac{1}{2}\,\partial_t\,\left( \|\bbeta_{\bu}\|^2_{\bL^2(\Omega)} + \kappa^2\|\nabla\bbeta_{\bu}\|^2_{\bbL^2(\Omega)} \right)
+ \tD\,\|\bbeta_{\bu}\|^2_{\bL^2(\Omega)} 
+ \tF\,C_{\rho}\,\|\bbeta_{\bu}\|^{\rho}_{\bL^\rho(\Omega)}  
+ \nu\,\|\bbeta_\bomega\|^2_{\bL^2(\Omega)} \\[2ex]
\ds\quad \leq\, - (\partial_t \bdelta_{\bu},\bbeta_{\bu})_{\Omega} - \kappa^2\,(\partial_t \nabla \bdelta_{\bu},\nabla\bbeta_{\bu})_{\Omega}
- [\cA_h(\bu)(\ubu) - \cA_h(\bu)(\wh{\ubv}_h),\bbeta_{\ubu}]  \\[2ex]
\ds\quad  -\, [\bc(\bu - \bu_h)(\ubu_h),\ubv_h] + (\bdelta_p,\div(\bbeta_\bu))_\Omega \,.
\end{array}
\end{equation}
The terms on the right hand side can be bounded using the Cauchy--Schwarz, H\"older and Young's inequalities (cf. \eqref{eq:Young-inequality}), \eqref{eq:continuity-of-Az}, as follows:
\begin{align}
&  - (\partial_t \bdelta_{\bu},\bbeta_{\bu})_{\Omega} - \kappa^2\,(\partial_t \nabla \bdelta_{\bu},\nabla\bbeta_{\bu})_{\Omega} \le \frac{\max\{1,\kappa^2\}}{2}\left( \|\partial_t \bdelta_{\bu}\|^2_{\bH^1(\Omega)} + \|\bbeta_{\bu}\|^2_{\bH^1(\Omega)} \right), \label{eta-u} \\
& - [\cA_h(\bu)(\ubu) - \cA_h(\bu)(\wh{\ubv}_h),\bbeta_{\ubu}] \nonumber \\
& \quad \le \wt{C}_1\,\left\{ \big( 1 + \|\bu\|_{\bH^1(\Omega)} + \|\bu\|^{\rho-2}_{\bH^1(\Omega)} - \|\wh{\bv}_h\|^{\rho-2}_{\bH^1(\Omega)} \big) \|\bdelta_\bu\|_{\bH^1(\Omega)} + \|\bdelta_\bomega\|_{\L^2(\Omega)} \right\}\big( \|\bbeta_\bu\|_{\bH^1(\Omega)} + \|\bbeta_\bomega\|_{\bL^2(\Omega)} \big) \nonumber \\
& \quad \leq \wh{C}_1\,\left\{ \big( 1 + \|\bu\|_{\bH^1(\Omega)} + \|\bu\|^{\rho-2}_{\bH^1(\Omega)} + \|\bdelta_\bu\|^{\rho-2}_{\bH^1(\Omega)} \big) \|\bdelta_\bu\|_{\bH^1(\Omega)} + \|\bdelta_\bomega\|_{\L^2(\Omega)} \right\}\big( \|\bbeta_\bu\|_{\bH^1(\Omega)} + \|\bbeta_\bomega\|_{\bL^2(\Omega)} \big) \nonumber \\
& \quad \leq C_1\,\left\{ \big( 1 + \|\bu\|^2_{\bH^1(\Omega)} + \|\bu\|^{2\,(\rho-2)}_{\bH^1(\Omega)} \big)\|\bdelta_\bu\|^2_{\bH^1(\Omega)} + \|\bdelta_\bu\|^{2(\rho-1)}_{\bH^1(\Omega)} + \|\bdelta_\bomega\|^2_{\L^2(\Omega)} \right\} \nonumber \\
& \qquad + \big( \|\bbeta_\bu\|^2_{\bH^1(\Omega)} + \frac{\nu}{2}\|\bbeta_\bomega\|^2_{\bL^2(\Omega)} \big), 
\label{eta-u-Lp}\\
& -[\bc(\bu - \bu_h)(\ubu_h),\bbeta_\bu] \le
\|\bi_4\|^2\,\|\bu_h\|_{\bH^1(\Omega)}\big( \|\bdelta_\bu\|_{\bH^1(\Omega)} + \|\bbeta_\bu\|_{\bH^1(\Omega)} \big)\|\bbeta_\bu\|_{\bH^1(\Omega)}, \nonumber  \\
& \quad \leq C_2\,\|\bu_h\|_{\bH^1(\Omega)}\big( \|\bdelta_\bu\|^2_{\bH^1(\Omega)} + \|\bbeta_\bu\|^2_{\bH^1(\Omega)} \big),
\label{eta-w} \\
& (\bdelta_p,\div(\bbeta_\bu))_\Omega \le \frac{\sqrt{d}}{2}\big( \|\bdelta_p\|^2_{\L^2(\Omega)} + \|\bbeta_\bu\|^2_{\bH^1(\Omega)} \big), \label{div-eta-u}
\end{align}
where $C_1, C_2 > 0$ depend on $\|\bi_4\|, \|\bi_\rho\|, \kappa, \tD, \tF$, and $\nu$. We note that in \eqref{eta-u-Lp} we used the continuous injection of $\bH^1(\Omega)$ into $\bL^{\rho}(\Omega)$, with $\rho\in [3,4]$, cf. \eqref{eq:Sobolev-inequality}. Combining \eqref{eq:first-error-estimate}--\eqref{div-eta-u}, and neglecting the term $\|\bbeta_{\bu}\|^{\rho}_{\bL^\rho(\Omega)}$ in \eqref{eq:first-error-estimate} to simplify the error estimate, we obtain
\begin{equation}\label{eq:error-estimate-2}
\begin{array}{l}
\ds \partial_t\,\left( \|\bbeta_{\bu}\|^2_{\bL^2(\Omega)} + \kappa^2\|\nabla\bbeta_{\bu}\|^2_{\bbL^2(\Omega)} \right)
+ \tD\,\|\bbeta_{\bu}\|^2_{\bL^2(\Omega)} 
+ \nu\,\|\bbeta_\bomega\|^2_{\bL^2(\Omega)} \\[2ex]
\ds\quad\,\leq\, C_3\,\Big( \|\partial_t\,\bdelta_{\bu}\|^2_{\bH^1(\Omega)} 
+ \big( 1 + \|\bu_h\|_{\bH^1(\Omega)} + \|\bu\|^2_{\bH^1(\Omega)} + \|\bu\|^{2\,(\rho-2)}_{\bH^1(\Omega)} \big)\|\bdelta_{\bu}\|^{2}_{\bH^1(\Omega)} \\[2ex] 
\ds\quad \,+\, \|\bdelta_{\bu}\|^{2(\rho-1)}_{\bH^1(\Omega)}
+ \|\bdelta_{\bomega}\|^2_{\bL^2(\Omega)} 
+ \|\bdelta_{p}\|^2_{\L^2(\Omega)} 
+ \big( 1 + \|\bu_h\|_{\bH^1(\Omega)}\big)\|\bbeta_{\bu}\|^2_{\bH^1(\Omega)} \Big),
\end{array}
\end{equation}
with $C_3$ a positive constant depending on $|\Omega|, \|\bi_{4}\|, \|\bi_{\rho}\|, \nu, \tD, \tF$, and $\kappa$.
Integrating \eqref{eq:error-estimate-2} from $0$ to $t\in (0,T]$, recalling that $\|\bu\|_{\L^\infty(0,T;\bH^1(\Omega))}$ and $\|\bu_h\|_{\L^\infty(0,T;\bH^1(\Omega))}$ are bounded by data (cf. \eqref{eq:stability-bound-u-om-reduced}, \eqref{eq:discrete-stability-bound}), 
we find that
\begin{equation}\label{eq:error-estimate-3}
\begin{array}{l}
\ds \|\bbeta_{\bu}(t)\|^2_{\bH^1(\Omega)} 
+ \int^t_0 \Big( \|\bbeta_{\bu}\|^2_{\bL^2(\Omega)} 
+ \|\bbeta_\bomega\|^2_{\bL^2(\Omega)} \Big)\,ds 
\leq\, C_4\,\left\{ \int^t_0 \Big(  \|\partial_t\,\bdelta_{\bu}\|^2_{\bH^1(\Omega)} 
+ \|\bdelta_{\ubu}\|^2 \Big)\, ds \right. \\[3ex]
\ds\quad +\, \left. \int^t_0 \Big( \|\bdelta_{\bu}\|^{2(\rho-1)}_{\bH^1(\Omega)}  
+ \|\bdelta_{p}\|^2_{\L^2(\Omega)} \Big)\, ds \right\}
\,+\, \wh{C}_4\left\{ \int^t_0 \|\bbeta_{\bu}\|^2_{\bH^1(\Omega)}\, ds 
+ \|\bbeta_{\bu}(0)\|^2_{\bH^1(\Omega)} \right\},
\end{array}
\end{equation}
with $C_5, \wh{C}_4>0$ depending on $|\Omega|, \|\bi_{4}\|, \|\bi_{\rho}\|, \nu, \tD, \tF, \kappa$, and data.

On the other hand, to estimate $\|\be_{p}\|_{\L^2(0,T;\L^2(\Omega))}$, we observe that from 
the discrete inf-sup condition of $\cB$ (cf. \eqref{eq:discrete-inf-sup-condition}), 
the first equation of \eqref{eq:error-system}, and the continuity bounds of $\cB, \cE, \cA_h$ 
(cf. \eqref{eq:continuity-B}, \eqref{eq:continuity-monotonicity-E}, \eqref{eq:continuity-of-Az}), there holds
\begin{equation*}
\begin{array}{l}
\ds \beta_\ttd\,\|\bbeta_{p}\|_{\L^2(\Omega)} 
\,\leq\,
\sup_{\0\neq\ubv_h\in \bH^{\bu}_h\times \bH^{\bomega}_h} 
\frac{-\big( [\partial_t\,\cE(\be_{\ubu}),\ubv_h] + [\cA_h(\bu)(\ubu) - \cA_h(\bu_h)(\ubu_h),\ubv_h] + [\cB(\ubv_h),\bdelta_{p}] \big)}{\|\ubv_h\|} \\[4ex]
\ds\,\, \leq C\,\Big( \|\partial_t \be_{\bu}\|_{\bH^1(\Omega)} 
+ \big( 1 + \|\bu\|_{\bH^1(\Omega)} + \|\bu\|^{\rho-2}_{\bH^1(\Omega)} + \|\bu_h\|^{\rho-2}_{\bH^1(\Omega)} \big) \|\be_{\bu}\|_{\bH^1(\Omega)} 
+ \|\be_{\bomega}\|_{\bL^2(\Omega)} 
+ \|\bdelta_{p}\|_{\L^2(\Omega)} \Big)\,,
\end{array}
\end{equation*}
with $C>0$ depending on $|\Omega|, \|\bi_4\|, \|\bi_\rho\|, \nu, \tD, \tF$, and $\kappa$.
Then, taking square in the above inequality, integrating from $0$ to $t\in (0,T]$, 
using again the fact that $\|\bu\|_{\L^\infty(0,T;\bH^1(\Omega))}$ and $\|\bu_h\|_{\L^\infty(0,T;\bH^1(\Omega))}$ are bounded by data (cf. \eqref{eq:stability-bound-u-om-reduced}, \eqref{eq:discrete-stability-bound})
and employing \eqref{eq:error-estimate-3}, we deduce that
\begin{equation}\label{eq:error-estimate-p-1}
\begin{array}{l}
\ds \int^t_0 \|\bbeta_{p}\|^2_{\L^2(\Omega)} \,ds 
\,\leq\, 
C_5\,\int^t_0 \Big( \|\partial_t\,\bdelta_{\bu}\|^2_{\bH^1(\Omega)} 
+ \|\bdelta_{\ubu}\|^2 + \|\bdelta_{\bu}\|^{2(\rho-1)}_{\bH^1(\Omega)}  
+ \|\bdelta_{p}\|^2_{\L^2(\Omega)} \Big)\, ds \\[3ex]
\ds\quad +\, \wh{C}_5\left\{ \int^t_0 \Big( \|\bbeta_{\bu}\|^2_{\bH^1(\Omega)} + \|\partial_t\,\bbeta_{\bu}\|^2_{\bH^1(\Omega)} \Big)\,ds 
+ \|\bbeta_{\bu}(0)\|^2_{\bH^1(\Omega)} \right\} \,,
\end{array}
\end{equation}
with $C_5, \wh{C}_5>0$ depending on $|\Omega|, \|\bi_{4}\|, \|\bi_\rho\|, \nu, \tD, \tF, \beta_\ttd, \kappa$, and data.
Next, in order to bound the term $\|\partial_t\,\bbeta_{\bu}\|_{\bH^1(\Omega)}$ in \eqref{eq:error-estimate-p-1}, we differentiate in time
the equation of \eqref{eq:error-system} related to $\bpsi_h$ and choose 
$\ubv_h = (\partial_t\,\bbeta_{\bu},\bbeta_{\bomega})$ 
to find that
\begin{align}\label{eq:bound-dt-etau-1}
& \ds \min\{1,\kappa^2\}\|\partial_t\,\bbeta_{\bu}\|^2_{\bH^1(\Omega)} + \frac{1}{2}\,\partial_t\Big( \tD\,\|\bbeta_{\bu}\|^2_{\bL^2(\Omega)} 
+ \nu\,\|\bbeta_{\bomega}\|^2_{\bL^2(\Omega)} \Big) \nonumber  \\[2ex]
&\ds\quad =\, -(\partial_t\,\bdelta_{\bu},\partial_t\,\bbeta_{\bu})_{\Omega}
- \kappa^2\,(\partial_t\,\nabla\bdelta_{\bu},\partial_t\,\nabla\bbeta_{\bu})_{\Omega}
- \tD\,(\bdelta_{\bu},\partial_t\,\bbeta_{\bu})_{\Omega}
- \nu (\partial_t\,\bdelta_\bomega,\bbeta_\bomega)_{\Omega} \nonumber \\[2ex]
&\ds\quad \,\,-\, 
\nu (\bdelta_\bomega,\bcurl(\partial_t\,\bbeta_\bu))_{\Omega}
+ \nu (\bbeta_\bomega,\bcurl(\partial_t \bdelta_\bu))_{\Omega}
+ (\bdelta_p,\div(\partial_t\bbeta_\bu))_{\Omega} \\[2ex]
&\ds\quad\,\,-\, 
\tF\,(|\bu|^{\rho-2}\bu - |\bu_h|^{\rho-2}\bu_h,\partial_t\,\bbeta_{\bu})_{\Omega}
- ((\nabla\bu)\bu - (\nabla\bu_h)\bu_h, \partial_t \bbeta_\bu)_\Omega
\,. \nonumber
\end{align}
Notice that $(\bbeta_p,\div(\partial_t\bbeta_\bu))_{\Omega} = 0$ since $(\bbeta_{\bu}(t),\0)\in \bV_h$ 
(cf. \eqref{eq:discrete-kernel-B} and \eqref{eq:dt-vh-discrete-kernel}).
In turn, using the H\"older inequality, the estimate \eqref{eq:continuity-bound-2} and the continuous injection of $\bH^1(\Omega)$ into $\bL^{\rho}(\Omega)$ we deduce that there exists a constant $c_\rho>0$ depending on $|\Omega|$ and $\rho$ such that
\begin{align}\label{eq:Forchheimer-error-bound}
(|\bu|^{\rho-2}\bu - |\bu_h|^{\rho-2}\bu_h,\partial_t\,\bbeta_{\bu})_{\Omega}
&\,\leq\, c_\rho\,\big( \|\bu\|_{\bL^{\rho}(\Omega)} + \|\bu_h\|_{\bL^{\rho}(\Omega)} \big)^{\rho-2} \|\be_\bu\|_{\bL^{\rho-1}(\Omega)} \|\partial_t\,\bbeta_{\bu}\|_{\bL^\rho(\Omega)} \nonumber \\[2ex]
&\,\leq\, c_\rho\,\|\bi_\rho\|^\rho\,\big( \|\bu\|_{\bH^1(\Omega)} + \|\bu_h\|_{\bH^1(\Omega)} \big)^{\rho-2} \|\be_\bu\|_{\bH^1(\Omega)} \|\partial_t\,\bbeta_{\bu}\|_{\bH^1(\Omega)} \,.
\end{align}
Similarly, but now adding and subtracting the term $(\nabla\bu)\bu_h$ (it also works with $(\nabla\bu_h)\bu$), using the continuous injection of $\bH^1(\Omega)$ into $\bL^4(\Omega)$, we obtain
\begin{align}\label{eq:Convective-error-bound}
((\nabla\bu)\bu - (\nabla\bu_h)\bu_h, \partial_t \bbeta_\bu)_\Omega
&\,\leq\, \Big(\|\nabla\bu\|_{\bL^2(\Omega)}\|\be_\bu\|_{\bL^4(\Omega)} + \|\bu_h\|_{\bL^4(\Omega)}\|\nabla\be_\bu\|_{\bbL^2(\Omega)} \Big) \|\partial_t \bbeta_\bu\|_{\bL^4(\Omega)}  \nonumber \\[2ex]
&\,\leq\, \|\bi_4\|^2\,\big( \|\bu\|_{\bH^1(\Omega)} + \|\bu_h\|_{\bH^1(\Omega)} \big) \|\be_\bu\|_{\bH^1(\Omega)} \|\partial_t\,\bbeta_{\bu}\|_{\bH^1(\Omega)} \,.
\end{align}
Thus, integrating \eqref{eq:bound-dt-etau-1} from $0$ to $t\in (0,T]$, using 
the estimates \eqref{eq:Forchheimer-error-bound} and \eqref{eq:Convective-error-bound}, 
the Cauchy--Schwarz and Young's inequalities, and the fact that $\|\bu\|_{\L^\infty(0,T;\bH^1(\Omega))}$ and $\|\bu_h\|_{\L^\infty(0,T;\bH^1(\Omega))}$ are bounded by data (cf. \eqref{eq:stability-bound-u-om-reduced}, \eqref{eq:discrete-stability-bound}), in a way similar to \eqref{eta-u}--\eqref{div-eta-u}, we find that
\begin{equation}\label{eq:error-estimate-dt-etau}
\begin{array}{l}
\ds \|\bbeta_{\bu}(t)\|^2_{\bL^2(\Omega)} 
+ \|\bbeta_{\bomega}(t)\|^2_{\bL^2(\Omega)} 
+ \int^t_0 \|\partial_t\,\bbeta_{\bu}\|^2_{\bH^1(\Omega)}\,ds \\[3ex]
\ds \leq\, C_6\,\bigg( \int^t_0
\Big( \|\partial_t\,\bdelta_{\ubu}\|^2
+ \|\bdelta_\bu\|^2_{\bH^1(\Omega)}
+ \|\bdelta_\bomega\|^2_{\L^2(\Omega)}
+ \|\bdelta_p\|^2_{\L^2(\Omega)}
\Big) ds \\[3ex]
\ds\quad +\, \int^t_0 \Big( \|\bbeta_\bu\|^2_{\bH^1(\Omega)} + \|\bbeta_{\bomega}\|^2_{\bL^2(\Omega)} \Big)\,ds 
+ \|\bbeta_{\bu}(0)\|^2_{\bL^2(\Omega)} + \|\bbeta_{\bomega}(0)\|^2_{\bL^2(\Omega)} \bigg) \,,
\end{array}
\end{equation}
where $C_6 > 0$ depends on $|\Omega|, \|\bi_{4}\|, \|\bi_{\rho}\|, \nu, \tD, \tF, \kappa$, and data. 
Then, combining estimates \eqref{eq:error-estimate-3}, \eqref{eq:error-estimate-p-1} and \eqref{eq:error-estimate-dt-etau}, using the Gr\"onwall inequality, and some algebraic manipulations, we deduce that
\begin{equation}\label{eq:full-error-estimate-preliminar}
\begin{array}{l}
\ds \|\bbeta_{\bu}(t)\|^2_{\bH^1(\Omega)} 
+ \|\bbeta_{\bomega}(t)\|^2_{\bL^2(\Omega)} 
+ \int^t_0 \Big( \|\bbeta_{\bu}\|^2_{\bL^2(\Omega)} 
+ \|\bbeta_\bomega\|^2_{\bL^2(\Omega)}
+ \|\bbeta_{p}\|^2_{\L^2(\Omega)}
+ \|\partial_t\,\bbeta_{\bu}\|^2_{\bH^1(\Omega)} \Big)\,ds \\[3ex]
\ds\quad \leq\, C_7\,\exp(T)\bigg( \int^t_0
\Big( \|\partial_t\,\bdelta_{\ubu}\|^2
+ \|\bdelta_{\ubu}\|^2
+ \|\bdelta_{\bu}\|^{2(\rho-1)}_{\bH^1(\Omega)} 
+ \|\bdelta_{p}\|^2_{\L^2(\Omega)} \Big)\, ds 
\,+\, \|\bbeta_{\ubu}(0)\|^2 \bigg) \,,
\end{array}
\end{equation}
with $C_7>0$ depending on $|\Omega|, \|\bi_4\|, \|\bi_{\rho}\|, \nu, \tD, \tF, \beta_\ttd, \kappa$, and data.

Finally, in order to bound the last term in \eqref{eq:full-error-estimate-preliminar}, we subtract 
the continuous and discrete initial condition problems \eqref{eq:initial-condition-full-problem} 
and \eqref{eq:discrete-initial-condition-full-problem} to obtain the error system:
\begin{equation*}
\begin{array}{lll}
\ds (\nabla\bu_0 - \nabla\bu_{h,0},\nabla\bv_{h})_\Omega + [\cA_h(\bu_0)(\ubu_0) - \cA_h(\bu_{h,0})(\ubu_{h,0}),\ubv_h] + [\cB(\ubv_h),p_0 - p_{h,0}] & = & 0 \,, \\ [2ex]
\ds -\,[\cB(\ubu_0 - \ubu_{h,0}),q_h] & = & 0 \,,
\end{array}
\end{equation*} 
for all $\ubv_h\in \bH^{\bu}_h\times \bH^{\bomega}_h$ and $q_h\in \H^p_h$.
Then, proceeding as in \eqref{eq:error-estimate-2}, recalling from 
Theorems \ref{thm:well-posed-result-reduced} and \ref{thm:well-posed-discrete-result} 
that $(\bu(0),\bomega(0)) = (\bu_0,\bomega_0)$ and $(\bu_h(0),\bomega_h(0)) = (\bu_{h,0},\bomega_{h,0})$, 
respectively, we get
\begin{equation}\label{eq:error-estimate-sol0}
\|\bbeta_{\bu}(0)\|^2_{\bH^1(\Omega)} 
+ \|\bbeta_\bomega(0)\|^2_{\bL^2(\Omega)}
\,\leq\, \wt{C}_0\,\Big(  \|\bdelta_{\ubu_0}\|^2
+ \|\bdelta_{\bu_0}\|^{2\,(\rho-1)}_{\bH^1(\Omega)} 
+ \|\bdelta_{p_0}\|^2_{\L^2(\Omega)} \Big)\,,
\end{equation}
where, similarly to \eqref{eq:delta-eta-definition}, we denote 
$\bdelta_{\ubu_0}=(\bdelta_{\bu_0},\bdelta_{\bomega_0}) = (\bu_0 - \wh{\bv}_{h}(0), \bomega_0 - \wh{\bpsi}_{h}(0))$ 
and $\bdelta_{p_0} = p_0 - \wh{q}_{h}(0)$, with arbitrary 
$(\wh{\bv}_{h}(0),\wh{\bpsi}_{h}(0))\in \bV_h$ and $\wh{q}_{h}(0)\in \H^p_{h}$, 
and $\wt{C}_0$ is a positive constant depending on $|\Omega|, \|\bi_4\|, \|\bi_{\rho}\|, \nu, \tD, \tF$, and $\kappa$.

Thus, combining \eqref{eq:full-error-estimate-preliminar} with \eqref{eq:error-estimate-sol0}, 
and using the error decomposition \eqref{eq:error-decomposition}, there holds
\begin{equation}\label{eq:error-estimate-4}
\begin{array}{l}
\ds \|\be_{\bu}(t)\|^2_{\bH^1(\Omega)}
+ \int^t_0 \Big( \|\be_{\bu}\|^2_{\bL^2(\Omega)} 
+ \|\be_\bomega\|^2_{\bL^2(\Omega)} 
+ \|\be_{p}\|^2_{\L^2(\Omega)} \Big)\,ds
\,\leq\, C_8\,\exp(T)\,\Psi(\ubu,p)\,,
\end{array}
\end{equation}
where
\begin{equation*}
\begin{array}{l}
\ds \Psi(\ubu,p) \,:=\, \|\bdelta_{\ubu}(t)\|^2 
+ \int^t_0 \Big( \|\partial_t\,\bdelta_{\ubu}\|^2
+ \|\bdelta_{\ubu}\|^2  
+ \|\bdelta_{\ubu}\|^{2\,(\rho-1)}
+ \|\bdelta_{p}\|^2_{\L^2(\Omega)} \Big)\,ds \\ [3ex]
\ds\quad\,+\, \|\bdelta_{\ubu_0}\|^2 
+ \|\bdelta_{\ubu_0}\|^{2\,(\rho-1)} 
+ \|\bdelta_{p_0}\|^2_{\L^2(\Omega)} \,,
\end{array}
\end{equation*}
with $C_8>0$ depending on $|\Omega|, \|\bi_4\|, \|\bi_{\rho}\|, \nu, \tD, \tF, \beta_\ttd$, $\kappa$, and data.
Finally, using the fact that $\wh{\ubv}_h:[0,T]\to \bV_h$ and $\wh{q}_h:[0,T]\to \H^p_h$ 
are arbitrary, taking infimum in \eqref{eq:error-estimate-4} 
over the corresponding discrete subspaces $\bV_h$ and $\H^p_h$, 
and applying the approximation properties \eqref{eq:inf-approximation-properties}, 
we derive \eqref{eq:rate-estimate-u-omega-p} and conclude the proof.
\end{proof}

\begin{rem}
Observe that \eqref{eq:rate-estimate-u-omega-p} can be expanded to include a bound on $\|\partial_t\,\be_\bu\|_{\L^2(0,T;\bH^1(\Omega))}$ and
$\|\be_\bomega\|_{\L^{\infty}(0,T;\bL^2(\Omega))}$, using \eqref{eq:full-error-estimate-preliminar}. 
\end{rem}


\section{Fully discrete approximation}\label{sec:fully-discrete-approximation}

In this section we introduce and analyze a fully discrete approximation of \eqref{eq:KVBF-variational-formulation} (cf. \eqref{eq:discrete-weak-KVBF}). 
To that end, for the time discretization we employ the backward Euler method. 
Let $\Delta t$ be the time step, $T = N\Delta t$, and let $t_n = n\Delta t$, $n = 0, ..., N$. 
Let $d_t u^n = (\Delta t)^{-1}(u^n - u^{n-1})$ be the
first order (backward) discrete time derivative, where $u^n := u(t_n)$. Then the fully discrete method
reads: given $\f^n\in \bH^{-1}(\Omega)$ and $(\ubu_h^0,p_h^0) = ((\bu_{h,0},\bomega_{h,0}),p_{h,0})$ satisfying \eqref{eq:discrete-initial-condition}, find $(\ubu_h^n,p_h^n):=((\bu_h^n,\bomega_h^n),p_h^n)\in (\bH^{\bu}_h\times\bH^{\bomega}_h)\times \H^p_h$, $n = 1, ..., N$, such that
\begin{equation}\label{eq:fully-discrete-weak-Brinkman-Forchheimer}
\begin{array}{llll}
d_t[\cE(\ubu_h^n),\ubv_h] +  [\cA_h(\bu_h^n)(\ubu_h^n),\ubv_h] + [\cB(\ubv_h), p_h^n] & = & [\bF^n,\ubv_h] 
& \forall\, \ubv_h\in \bH^{\bu}_h\times\bH^{\bomega}_h \,, \\[2ex] 
-[\cB(\ubu_h^n), q_h] & = & 0 & \forall\, q_h\in \H^{p}_h \,,
\end{array}
\end{equation}
where $[\bF^n,\ubv_h] := (\f^n,\bv_h)_\Omega$.

In what follows, given a separable Banach space $\V$ endowed with the norm $\| \cdot \|_{\V}$, 
we make use of the following discrete in time norms
\begin{equation}\label{eq:fully-discrete-norm}
\ds \|u\|^2_{\ell^2(0,T;\V)}:=\Delta t\,\sum_{n=1}^N \|u^n\|_\V^2 \,\qan\, \|u\|_{\ell^\infty(0,T;\V)}:= \max_{0\leq n\leq N} \|u^n\|_\V \,.
\end{equation}
We also recall the well-known identity:
\begin{equation}\label{eq:identity-uhn}
(d_t\,u_h^n, u_h^n)_{\Omega}
= \dfrac{1}{2}\,d_t\,\|u_h^n\|^2_{\L^2(\Omega)}
+ \dfrac{1}{2}\,\Delta t\,\|d_t\,u_h^n\|^2_{\L^2(\Omega)} \,,
\end{equation}
and the following discrete Gr\"onwall inequality \cite[Lemma 1.4.2]{Quarteroni-Valli}.
\begin{lem}\label{lem:discrete-Gronwall}
Let $\Delta\,t>0, B\geq 0$, and let $a_n, b_n, c_n, d_n, n\geq 0$, be non-negative sequence such that $a_0\leq B$ and
\begin{equation*}
a_n + \Delta\,t \sum^n_{l=1} b_l \,\leq\,
\Delta\,t \sum^{n-1}_{l=1} d_l\,a_l + \Delta\,t \sum^n_{l=1} c_l + B \,,\quad n\geq 1\,.
\end{equation*}
Then,
\begin{equation*}
a_n + \Delta\,t \sum^n_{l=1} b_l \,\leq\,
\exp\left(\Delta\,t \sum^{n-1}_{l=1} d_l\right)\left(\Delta\,t \sum^n_{l=1} c_l + B \right) \,,\quad n\geq 1\,.
\end{equation*}
\end{lem}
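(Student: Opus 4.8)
The plan is to prove this by a direct induction on $n$, exploiting the key structural feature that the sum on the right-hand side runs only up to $l=n-1$, so that $a_n$ never appears on the right. Because of this, the recursion is \emph{explicit} and can be iterated without any smallness restriction on $\Delta t\,d_l$; this is what makes the clean exponential bound possible.

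First I would isolate the forcing term by setting
\[
G_n \,:=\, \Delta t \sum_{l=1}^n c_l + B,
\]
which is non-negative and non-decreasing in $n$ since $c_l, B\geq 0$. Dropping the non-negative contribution $\Delta t\sum_{l=1}^n b_l$ from the left-hand side of the hypothesis yields the pure recursion $a_n \leq \Delta t\sum_{l=1}^{n-1} d_l\,a_l + G_n$, whose base case $n=1$ reads $a_1 \leq G_1$ (the empty sum $\sum_{l=1}^0$ vanishing, and $\exp(0)=1$).

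The core step is to establish, by induction on $n$, the intermediate bound
\[
a_n \,\leq\, G_n \,\exp\!\Big( \Delta t \sum_{l=1}^{n-1} d_l \Big).
\]
Writing $\sigma_l := \Delta t\sum_{j=1}^l d_j$ (so $\sigma_0=0$ and $\sigma_l-\sigma_{l-1}=\Delta t\,d_l$), the inductive step reduces to converting the weighted discrete sum $\Delta t\sum_{l=1}^{n-1} d_l\,G_l\,e^{\sigma_{l-1}}$ into an exponential factor. Using the monotonicity $G_l\leq G_n$ together with the elementary estimate $e^x-1\geq x$ in its telescoping form $(\sigma_l-\sigma_{l-1})\,e^{\sigma_{l-1}} \leq e^{\sigma_l}-e^{\sigma_{l-1}}$, summation from $l=1$ to $n-1$ gives $1+\sum_{l=1}^{n-1}(\sigma_l-\sigma_{l-1})\,e^{\sigma_{l-1}} \leq e^{\sigma_{n-1}}$, which is exactly the factor needed to close the induction and recover the claimed bound on $a_n$.

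Finally, I would reinsert the omitted term: applying the identical chain of estimates to the full hypothesis (now keeping $\Delta t\sum_{l=1}^n b_l$ on the left and bounding each $a_l$ appearing on the right by the intermediate bound just proved) reproduces the factor $\exp(\sigma_{n-1})$ verbatim, yielding the stated estimate for $a_n + \Delta t\sum_{l=1}^n b_l$. I expect the only delicate point to be the telescoping conversion of the weighted discrete sum into the exponential via $1+x\leq e^x$ and the correct use of the monotonicity of $G_n$; the remainder is bookkeeping. Since this is a classical result quoted from \cite[Lemma 1.4.2]{Quarteroni-Valli}, no genuine obstacle is anticipated.
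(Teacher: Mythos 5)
Your proof is correct. Note, however, that the paper itself gives no proof of this lemma: it is quoted verbatim from the reference \cite[Lemma 1.4.2]{Quarteroni-Valli} and used as a black box, so there is nothing in the paper to compare against line by line. Your argument is a clean, self-contained derivation: the strong induction on the intermediate bound $a_n \leq G_n\,e^{\sigma_{n-1}}$ closes correctly, since the monotonicity $G_l\leq G_n$ lets you factor out $G_n$, and the convexity estimate $(\sigma_l-\sigma_{l-1})\,e^{\sigma_{l-1}}\leq e^{\sigma_l}-e^{\sigma_{l-1}}$ telescopes to exactly $e^{\sigma_{n-1}}-1$, which combines with the additive $G_n$ to give the factor $e^{\sigma_{n-1}}$. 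Reinserting $\Delta t\sum_{l=1}^n b_l$ at the end works verbatim because that term was simply dropped from the left-hand side and the right-hand side of the hypothesis is unchanged. Two minor remarks: the base case of the \emph{final} claim should also be checked with the $b_1$ term present ($a_1+\Delta t\,b_1\leq G_1$, which is immediate), and the hypothesis $a_0\leq B$ is never actually used for $n\geq 1$ (it is presumably retained from the reference's formulation); neither point affects the validity of your argument.
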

Next, we state the main results for method \eqref{eq:fully-discrete-weak-Brinkman-Forchheimer}.

\begin{thm}\label{thm:well-posed-result-fully-discrete-problem}
For each $(\ubu_h^0,p_h^0) := ((\bu_{h,0},\bomega_{h,0}),p_{h,0})$ satisfying \eqref{eq:discrete-initial-condition-full-problem} and  
$\f^n\in \bH^{-1}(\Omega)$, $n=1,...,N$,  
there exist a unique solution $(\ubu_h^n,p_h^n) := ((\bu_h^n,\bomega_h^n),p_h^n)\in (\bH^{\bu}_h\times\bH^{\bomega}_h)\times \H^p_h$ to \eqref{eq:fully-discrete-weak-Brinkman-Forchheimer}, with $n=1,...,N$. 
Moreover, there exists a constant $\wt{C}_{\tt KVr}>0$ depending only on $|\Omega|, \|\bi_\rho\|, \|\bi_4\|, \nu, \tD, \tF$, and $\kappa$, such that
\begin{equation}\label{eq:fully-discrete-stability-bound}
\begin{array}{l}
\|\bu_h\|_{\ell^\infty(0,T;\bH^1(\Omega))}
+ \Delta t \|d_t \bu_h\|_{\ell^{2}(0,T;\bH^1(\Omega))}
+ \| \bu_h\|_{\ell^2(0,T;\bL^2(\Omega))} 
+ \|\bomega_h\|_{\ell^{2}(0,T;\bL^2(\Omega))} \\[2ex]
\ds\qquad \,\leq\, \wt{C}_{\tt KVr}\,\sqrt{\exp(T)}\,\Big( \|\f\|_{\ell^2(0,T;\bH^{-1}(\Omega))} 
+ \|\bu_0\|_{\bH^1(\Omega)} 
+ \|\bu_0\|^2_{\bH^1(\Omega)} 
+ \|\bu_0\|^{\rho-1}_{\bH^1(\Omega)} 
\Big) \,,
\end{array}
\end{equation}	
and a constant $\wt{C}_{\tt KVp}>0$ depending only on $|\Omega|, \|\bi_\rho\|, \|\bi_4\|, \nu, \tD, \tF, \kappa$, and $\beta_\ttd$, such that
\begin{equation}\label{eq:fully-discrete-stability-bound-p}
\begin{array}{l}
\|p_h\|_{\ell^{2}(0,T;\L^2(\Omega))} \\[2ex]
\ds\qquad
\leq\, \wt{C}_{\tt KVp}\,
\sum_{j\in \{ 2,3,\rho \}} \left\{ \sqrt{\exp(T)}\,\Big( \|\f\|_{\ell^2(0,T;\bH^{-1}(\Omega))} 
+ \|\bu_0\|_{\bH^1(\Omega)} 
+ \|\bu_0\|^2_{\bH^1(\Omega)} 
+ \|\bu_0\|^{\rho-1}_{\bH^1(\Omega)} \Big) \right\}^{j-1} \,.
\end{array}
\end{equation}
\end{thm}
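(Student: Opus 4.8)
The plan is to treat \eqref{eq:fully-discrete-weak-Brinkman-Forchheimer} as a finite sequence of stationary nonlinear saddle-point problems, one per time level, and to bootstrap the stability estimates from the energy method already used in the continuous and semidiscrete settings, now combined with the discrete differentiation identity \eqref{eq:identity-uhn} and the discrete Gr\"onwall inequality of Lemma~\ref{lem:discrete-Gronwall}.

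For the well-posedness I would fix $n\in\{1,\dots,N\}$, assume $(\ubu_h^{n-1},p_h^{n-1})$ known, and write $d_t[\cE(\ubu_h^n),\ubv_h] = \tfrac{1}{\Delta t}[\cE(\ubu_h^n),\ubv_h] - \tfrac{1}{\Delta t}[\cE(\ubu_h^{n-1}),\ubv_h]$, so that problem \eqref{eq:fully-discrete-weak-Brinkman-Forchheimer} at level $n$ takes exactly the form of the resolvent system of Section~\ref{sec:well-posedness-model}, but with $\cE$ replaced by $\tfrac{1}{\Delta t}\cE$ and a known right-hand side $\tfrac{1}{\Delta t}\cE(\ubu_h^{n-1})+\bF^n$. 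Since $\tfrac{1}{\Delta t}\cE$ is still linear, symmetric and monotone, the operator $\tfrac{1}{\Delta t}\cE+\cA_h(\bz_h)$ inherits the continuity, coercivity and strong monotonicity of Lemma~\ref{lem:discrete-properties-A} (with $\gamma_{\tKV}$ replaced by $\min\{\tfrac{1}{\Delta t}+\tD,\tfrac{\kappa^2}{\Delta t}\}$). Reducing to the discrete kernel $\bK_h\times\bH^{\bomega}_h$ via Lemma~\ref{lem:discrete-inf-sup}, I would run the argument of Lemma~\ref{lem:resolvent-system} verbatim: the linearized problem is uniquely solvable by the Browder--Minty theorem (Theorem~\ref{thm:Browder-Minty-theorem}), the associated map on a ball of $\bK_h$ has a fixed point by Schauder's theorem yielding $\ubu_h^n$, and $p_h^n$ is recovered from \eqref{eq:discrete-inf-sup-condition}. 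Uniqueness follows as in Theorem~\ref{thm:well-posed-result-reduced}: subtracting two solutions, testing with their difference (which lies in $\bK_h$, so the pressure term drops out), and combining strong monotonicity \eqref{eq:strong-monotonicity-of-EAzh}, the skew-symmetry identity \eqref{eq:null-identity-czh}, the convective bound \eqref{eq:Convective-bound}, and the discrete Gr\"onwall inequality.

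For the velocity--vorticity bound \eqref{eq:fully-discrete-stability-bound} I would take $\ubv_h=\ubu_h^n$ and $q_h=p_h^n$. The identity \eqref{eq:identity-uhn} turns $d_t[\cE(\ubu_h^n),\ubu_h^n]$ into $\tfrac12 d_t\|\bu_h^n\|^2_{\cE}+\tfrac{\Delta t}{2}\|d_t\bu_h^n\|^2_{\cE}$; the convective form vanishes by \eqref{eq:null-identity-czh}, the rest of $\ba$ is controlled as in \eqref{eq:coercivity-of-EAzh}, and the datum term is handled with Cauchy--Schwarz and Young. Multiplying by $\Delta t$, summing over $n$ so the $d_t$ contribution telescopes, and applying Lemma~\ref{lem:discrete-Gronwall} gives the bound; the term $\Delta t\|d_t\bu_h\|_{\ell^2(0,T;\bH^1(\Omega))}$ is accommodated by the nonnegative $\tfrac{\Delta t}{2}\|d_t\bu_h^n\|^2_{\cE}$ retained on the left-hand side (using $\sqrt{\Delta t}\le\sqrt{T}$), while the initial contributions are absorbed via \eqref{eq:uh0-wh0-bound-u0-w0}. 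This reproduces the discrete analogues of \eqref{eq:stability-0}--\eqref{eq:stability-3}.

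Finally, for the pressure bound \eqref{eq:fully-discrete-stability-bound-p} I would apply \eqref{eq:discrete-inf-sup-condition} to the first equation of \eqref{eq:fully-discrete-weak-Brinkman-Forchheimer}, reproducing the discrete version of \eqref{eq:stability-p-inf-sup}; squaring, multiplying by $\Delta t$ and summing leaves $\|d_t\bu_h\|^2_{\ell^2(0,T;\bH^1(\Omega))}$ as the only uncontrolled term. This is the main obstacle, and it is the discrete counterpart of \eqref{eq:stability-5}. To treat it I would difference the discrete vorticity equation at levels $n$ and $n-1$ to obtain $\nu(\bpsi_h,\bcurl(d_t\bu_h^n))=\nu(d_t\bomega_h^n,\bpsi_h)$, and then test the momentum equation with $\ubv_h=(d_t\bu_h^n,\bomega_h^n)$. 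The $\cE$-term produces the coercive quantity $\wh{\gamma}_{\tKV}\|d_t\bu_h^n\|^2_{\bH^1(\Omega)}$; the differenced relation with $\bpsi_h=\bomega_h^n$ together with \eqref{eq:identity-uhn} converts $\nu(\bomega_h^n,\bcurl(d_t\bu_h^n))$ into $\tfrac{\nu}{2}d_t\|\bomega_h^n\|^2+\tfrac{\nu\Delta t}{2}\|d_t\bomega_h^n\|^2$, the Darcy and Forchheimer terms become $d_t$ of $\|\bu_h^n\|^2$ and, by convexity, of $\|\bu_h^n\|^{\rho}_{\bL^{\rho}(\Omega)}$, the pressure term vanishes because $d_t\bu_h^n\in\bK_h$, and the convective and datum terms are bounded via \eqref{eq:Convective-bound} and absorbed into $\wh{\gamma}_{\tKV}\|d_t\bu_h^n\|^2_{\bH^1(\Omega)}$ by Young's inequality. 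Summing over $n$ and using the already-established $\ell^\infty$-in-time bound on $\|\bu_h^n\|_{\bH^1(\Omega)}$ yields a bound on $\|d_t\bu_h\|_{\ell^2(0,T;\bH^1(\Omega))}$ independent of $\Delta t$, which inserted into the inf-sup estimate delivers \eqref{eq:fully-discrete-stability-bound-p}.
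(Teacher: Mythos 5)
Your proposal is correct and follows essentially the same route as the paper: existence by induction on the time level via the resolvent/fixed-point machinery of Lemma~\ref{lem:resolvent-system}, the velocity--vorticity bound by testing with $(\ubu_h^n,p_h^n)$ together with \eqref{eq:identity-uhn} and Lemma~\ref{lem:discrete-Gronwall}, and the pressure bound via \eqref{eq:discrete-inf-sup-condition} with the residual $\|d_t\bu_h\|_{\ell^2(0,T;\bH^1(\Omega))}$ controlled by testing with $((d_t\bu_h^n,\bomega_h^n),p_h^n)$ and the convexity inequality \eqref{eq:stability-6-h}. The only cosmetic difference is that you make the rescaled operator $\tfrac{1}{\Delta t}\cE+\cA_h$ explicit where the paper simply invokes ``similar arguments,'' which is a faithful unpacking of the same idea.
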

\begin{proof}
Existence of a solution of the fully discrete problem \eqref{eq:fully-discrete-weak-Brinkman-Forchheimer} at each time step $t_n$, $n=1,...,N$, can be established by induction. In particular assuming that a solution exists at $t_{n-1}$, existence of a solution at $t_n$ follows from similar arguments to the proof of Lemma~\ref{lem:resolvent-system}, using the discrete inf-sup condition \eqref{eq:discrete-inf-sup-condition}. We postpone the proof of uniqueness until after the stability bound.

The derivation of \eqref{eq:fully-discrete-stability-bound} and \eqref{eq:fully-discrete-stability-bound-p} can be obtained similarly as 
in the proof of Theorems \ref{thm:well-posed-result-reduced} and \ref{thm:stability-bound}, respectively.
In fact, we choose 
$(\ubv_h,q_h) = (\ubu_h^n,p_h^n)$ in \eqref{eq:fully-discrete-weak-Brinkman-Forchheimer}, 
use the identity \eqref{eq:identity-uhn}, 
the definition of the operator $\cA_h$ (cf. \eqref{eq:operator-Auh}), 
and the Cauchy--Schwarz and Young's inequalities (cf. \eqref{eq:Young-inequality}), 
to obtain
\begin{equation}\label{eq:stability-1-h}
\begin{array}{l}
\ds\dfrac{1}{2}\,d_t\Big(\|\bu_h^n\|^2_{\bL^2(\Omega)} + \kappa^2\,\|\nabla\bu_h^n\|^2_{\bL^2(\Omega)} \Big)
+ \frac{1}{2}\,\Delta t\,\Big( \|d_t\bu_h^n\|^2_{\bL^2(\Omega)} + \kappa^2\,\|d_t\nabla\bu_h^n\|^2_{\bL^2(\Omega)} \Big) \\[2ex]
\ds\quad +\, \tD\,\|\bu^n_h\|^2_{\bL^2(\Omega)}
+ \tF\,\|\bu^n_h\|^\rho_{\bL^{\rho}(\Omega)}
+ \nu\,\|\bomega^n_h\|^2_{\bL^2(\Omega)} 
\ds \,\leq\, \frac{1}{2}\,\Big(\|\f^n\|^2_{\bH^{-1}(\Omega)} 
+ \|\bu_h^n\|^2_{\bH^1(\Omega)} \Big)\,.
\end{array}
\end{equation}
Then, summing up over the time index $n=1,...,m$, with $m=1,\dots,N$, in \eqref{eq:stability-1-h} and multiplying by $\Delta t$, we get
\begin{equation}\label{eq:stability-3-h}
\begin{array}{l}
\ds \wh{\gamma}_{\tKV}\,\|\bu_h^m\|^2_{\bH^1(\Omega)}
+ \wh{\gamma}_{\tKV}\,(\Delta t)^2 \sum_{n=1}^m\|d_t\bu_h^n\|^2_{\bH^1(\Omega)}
+ 2\,\Delta t \sum_{n=1}^m\, \Big(
\tD\,\|\bu_h^n\|^2_{\bL^2(\Omega)}
+ \nu\,\|\bomega_h^n\|^2_{\bL^2(\Omega)} \Big) \\[2ex]
\ds\quad \leq\, \Delta t\,\sum_{n=1}^m \|\f^n\|^2_{\bH^{-1}(\Omega)} 
+ \wh{\gamma}_{\tKV}\,\|\bu_h^0\|^2_{\bH^1(\Omega)} + \Delta t\,\sum_{n=1}^m \|\bu^n_h\|^2_{\bH^1(\Omega)}\,,
\end{array}
\end{equation}	
with $\wh{\gamma}_\tKV$ as in \eqref{eq:stability-1}.
Notice that, in order to simplify the stability bound, we have neglected the term $\|\bu_h^n\|^\rho_{\bL^\rho(\Omega)}$ in the left-hand side of \eqref{eq:stability-1-h}.
Thus, analogously to \eqref{eq:stability-3}, applying the discrete Gr\"onwall inequality (cf. Lemma \ref{lem:discrete-Gronwall}) in \eqref{eq:stability-3-h} and recalling that $N\,\Delta t = T$, and using the estimate \eqref{eq:uh0-wh0-bound-u0-w0}, we deduce the stability bound \eqref{eq:fully-discrete-stability-bound}.

On the other hand, from the discrete inf-sup condition of $\cB$ 
(cf. \eqref{discrete-inf-sup}) and the first equation 
of \eqref{eq:fully-discrete-weak-Brinkman-Forchheimer} related to $\bv_h$, we deduce the discrete version of \eqref{eq:stability-p-inf-sup}, that is, 
\begin{equation}\label{eq:stability-sigmah-n-C2}
\begin{array}{l}
\ds \beta_\ttd\,\|p_h^n\|_{\L^2(\Omega)}
\,\leq\, \|\f^n\|_{\bH^{-1}(\Omega)} 
+ \tD\,\|\bu_h^n\|_{\bL^2(\Omega)} 
+ \nu\,\|\bomega_h^n\|_{\bL^2(\Omega)}  \\[2ex]
\ds\quad \,+\, \|\bi_4\|^2\,\|\bu_h^n\|^2_{\bH^1(\Omega)} 
+ \tF\,\|\bi_\rho\|^\rho\,\|\bu_h^n\|^{\rho-1}_{\bH^1(\Omega)} 
+ (1 + \kappa^2)\|d_t\bu_h^n\|_{\bH^1(\Omega)} \,.
\end{array}
\end{equation}
Then, squaring \eqref{eq:stability-sigmah-n-C2}, summing up over the time index $n=1,...,m$, with $m=1,\dots,N$, and multiplying by $\Delta t$, 
we deduce analogously to \eqref{eq:stability-4}, that there exists $C_1>0$ depending on $|\Omega|, \|\bi_4\|, \|\bi_\rho\|, \nu, \tD, \tF, \kappa$, and $\beta_\ttd$, such that
\begin{align}\label{eq:stability-4-h}
\begin{array}{l}
\ds \Delta t\,\sum_{n=1}^m\,\|p_h^n\|_{\L^2(\Omega)}^2
\,\leq\, C_1\,\bigg\{ \Delta t\,\sum_{n=1}^m 
\Big( \|\f^n\|_{\bH^{-1}(\Omega)}^2
+ \|\bu^n_h\|_{\bL^2(\Omega)}^2 + \|\bomega_h^n\|_{\bL^2(\Omega)}^2 \Big) \\[3ex]
\ds\quad +\, \Delta t\,\sum_{n=1}^m\,\Big( 
\|\bu^n_h\|^{4}_{\bH^1(\Omega)} 
+ \|\bu^n_h\|^{2\,(\rho-1)}_{\bH^1(\Omega)}
+ \|d_t\bu_h^n\|_{\bH^1(\Omega)}^2 \Big) \bigg\}\,.
\end{array}
\end{align}
Next, in order to bound the last term in \eqref{eq:stability-4-h}, we  
choose $(\ubv_h,q_h) = ((d_t\,\bu_h^n,\bomega_h^n),p_h^n)$ in \eqref{eq:fully-discrete-weak-Brinkman-Forchheimer},
apply some algebraic manipulation, use the identity \eqref{eq:identity-uhn}
and the Cauchy--Schwarz and Young's inequalities, to obtain
the discrete version of \eqref{eq:stability-4-b}:
\begin{align}\label{eq:stability-5-h}
&\ds \wh{\gamma}_\tKV\,\|d_t\bu_h^n\|^2_{\bH^1(\Omega)} 
+ \frac{1}{2}\,d_t\,\Big( \tD\,\|\bu_h^n\|^2_{\bL^2(\Omega)} 
+ \nu\,\|\bomega_h^n\|^2_{\bL^2(\Omega)} \Big) 
+ \dfrac{1}{2}\,\Delta t\,\Big( \tD\,\|d_t\bu_h^n\|^2_{\bL^2(\Omega)}
+ \nu\,\|d_t\bomega_h^n\|^2_{\bL^2(\Omega)} \Big) \nonumber \\
& \ds\quad +\, \tF\,(|\bu_h^n|^{\rho-2}\bu_h^n,d_t\bu_h^n)_\Omega 
\,\leq\, \left(\|\f^n\|_{\bH^{-1}(\Omega)} + \|\bi_4\|^2\left( 1 + \frac{\sqrt{d}}{2} \right) \|\bu^n_h\|^2_{\bH^1(\Omega)}\right) \|d_t\bu^n_h\|_{\bH^1(\Omega)} \nonumber \\
& \ds\quad \,\leq\, C_2\,\Big( \|\f^n\|^2_{\bH^{-1}(\Omega)} + \|\bu^n_h\|^4_{\bH^1(\Omega)} \Big) + \dfrac{\wh{\gamma}_{\tKV}}{2}\,\|d_t\bu^n_h\|^2_{\bH^1(\Omega)} \,,
\end{align}
with $\wh{\gamma}_\tKV$ as in \eqref{eq:stability-1} and $C_2>0$ depending on $\|\bi_4\|, d$, and $\kappa$.
In turn, employing H\"older and Young's inequalities, we are able to deduce (cf. \cite[eq. (5.13)]{covy2022}):
\begin{equation}\label{eq:stability-6-h}
(|\bu_h^n|^{\rho-2}\bu_h^n,d_t\bu_h^n)_\Omega 
\,\geq\, \dfrac{(\Delta t)^{-1}}{\rho} \Big(\|\bu_h^n\|_{\bL^\rho(\Omega)}^\rho 
- \|\bu_h^{n-1}\|_{\bL^\rho(\Omega)}^\rho\Big)
= \dfrac{1}{\rho}\,d_t\,\|\bu_h^n\|_{\bL^\rho(\Omega)}^\rho\,.
\end{equation}
Thus, combining \eqref{eq:stability-5-h} with \eqref{eq:stability-6-h}, using Young's inequality, summing up over the time index $n=1,...,m$, with $m=1,\dots,N$ and multiplying by $\Delta t$, we get the discrete version of \eqref{eq:stability-5}:
\begin{equation}\label{eq:stability-7-h}
\begin{array}{l}
\ds \tD\,\|\bu^m_h\|^2_{\bL^2(\Omega)}
+ \dfrac{2\,\tF}{\rho}\|\bu_h^m\|_{\bL^\rho(\Omega)}^\rho 
+ \nu\,\|\bomega^m_h\|^2_{\bL^2(\Omega)} 
\ds + \wh{\gamma}_\tKV\,\Delta t\,\sum_{n=1}^m\,\|d_t\bu_h^n\|_{\bH^1(\Omega)}^2 \\[3ex]
\ds\quad \leq\, 2\,C_2\,\Delta t\,\sum_{n=1}^m \Big( \|\f^n\|_{\bH^{-1}(\Omega)}^2 + \|\bu^n_h\|^4_{\bH^1(\Omega)} \Big) 
+ \tD\,\|\bu_h^0\|^2_{\bL^2(\Omega)} 
+ \frac{2\,\tF}{\rho}\,\|\bu_h^0\|_{\bL^\rho(\Omega)}^\rho 
+ \nu\,\|\bomega_h^0\|^2_{\bL^2(\Omega)} \,.
\end{array}
\end{equation}
Combining \eqref{eq:stability-4-h} with \eqref{eq:stability-3-h} and \eqref{eq:stability-7-h}, using the fact that $(\bu^0_h,\bomega^0_h) = (\bu_{h,0},\bomega_{h,0})$
and \eqref{eq:uh0-wh0-bound-u0-w0}, we deduce that
\begin{equation}\label{eq:stability-8-h}
\begin{array}{l}
\ds \Delta t\,\sum_{n=1}^m\,\|p_h^n\|_{\L^2(\Omega)}^2
\,\leq\, C_3\,\bigg\{ \Delta t\,\sum_{n=1}^m 
\|\f^n\|_{\bH^{-1}(\Omega)}^2
+ \|\bu_0\|^2_{\bH^1(\Omega)} 
+ \|\bu_0\|^4_{\bH^1(\Omega)}  
+ \|\bu_0\|^{2(\rho-1)}_{\bH^1(\Omega)}  \\[3ex]
\ds\quad +\, \Delta t\,\sum_{n=1}^m\,\Big( \|\bu^n_h\|^{2}_{\bH^1(\Omega)} + \|\bu^n_h\|^{4}_{\bH^1(\Omega)} + \|\bu^n_h\|^{2\,(\rho-1)}_{\bH^1(\Omega)} \Big) \bigg\}\,,
\end{array}
\end{equation}
with $m=1,\dots,N$ and $C_3>0$ depending on $|\Omega|, \|\bi_4\|, \|\bi_\rho\|, \nu, \tD, \tF, \kappa, d$, and $\beta_\ttd$.
Then, using \eqref{eq:fully-discrete-stability-bound} to bound $\|\bu^n_h\|^{2}_{\bH^1(\Omega)}$, $\|\bu^n_h\|^{4}_{\bH^1(\Omega)}$, and $\|\bu^n_h\|^{2\,(\rho-1)}_{\bH^1(\Omega)}$ in the left-hand side of \eqref{eq:stability-8-h}, we obtain \eqref{eq:fully-discrete-stability-bound-p}.

Finally, uniqueness of the solution at each time step can be established
using that $\|\bu_h\|_{\ell^\infty(0,T;\bH^1(\Omega))}$ is bounded by data (cf. \eqref{eq:fully-discrete-stability-bound}), following the argument showing uniqueness of the weak solution in Theorem~\ref{thm:well-posed-result-reduced}. In particular, starting from the time-discrete version of \eqref{eq:uniqueness-0}, the uniqueness follows from summing in time and using the discrete Gr\"onwall inequality (cf. Lemma \ref{lem:discrete-Gronwall}).
\end{proof}

Now, we proceed with establishing rates of convergence for 
the fully discrete scheme \eqref{eq:fully-discrete-weak-Brinkman-Forchheimer}.
To that end, we subtract the fully discrete problem \eqref{eq:fully-discrete-weak-Brinkman-Forchheimer} 
from the continuous counterparts \eqref{eq:KVBF-variational-formulation} 
at each time step $n = 1,\dots, N$, to obtain the following error system:
\begin{equation}\label{eq:error-system-2}
\begin{array}{lll}
\ds d_t\,[\cE(\be^n_{\ubu}),\ubv_h] + [\cA_h(\bu^n)(\ubu^n) - \cA_h(\bu^n_h)(\ubu^n_h),\ubv_h] + [\cB(\ubv_h),\be^n_{p}] & = & [r_n(\bu),\bv_h] \,, \\ [2ex]
\ds [\cB(\be^n_{\ubu}),q_h] & = & 0 \,,
\end{array}
\end{equation}
for all $\ubv_h\in \bH^{\bu}_{h}\times \bH^{\bomega}_h$ and $q_h\in \H^p_h$, where 
\begin{equation*}
[r_n(\bu),\bv_h] := (r_n(\bu), \bv_h)_\Omega + \kappa^2\,(r_n(\nabla\bu), \nabla\bv_h)_\Omega \,,
\end{equation*}
and $r_n$ denotes the difference between the time derivative and its discrete analog, that is
\begin{equation*}
r_n(\bu) \,=\, d_{t}\,\bu^n - \partial_t\,\bu(t_n).
\end{equation*}
In addition, we recall from \cite[Lemma 4]{byz2015} that for sufficiently smooth $\bu$, there holds
\begin{equation}\label{eq:bound-second-time-derivative}
\Delta\,t\,\sum^N_{n=1} \|r_n(\bu)\|^2_{\bH^1(\Omega)} 
\,\leq\, C(\partial_{tt}\,\bu)\,(\Delta\,t)^2,\quad\mbox{with}\quad C(\partial_{tt}\,\bu) \,:=\, C\,\|\partial_{tt}\,\bu\|^2_{\L^2(0,T;\bH^1(\Omega))}.
\end{equation}
Then, the proof of the theoretical rate of convergence of the fully discrete
scheme \eqref{eq:fully-discrete-weak-Brinkman-Forchheimer} follows the structure of the proof 
of Theorem \ref{thm:rate-of-convergence}, using discrete-in-time arguments as in the proof 
of Theorem~\ref{thm:well-posed-result-fully-discrete-problem}, the discrete Gr\"onwall inequality (cf. Lemma \ref{lem:discrete-Gronwall}) and the estimate \eqref{eq:bound-second-time-derivative} (see \cite[Theorem 5.4]{covy2022} for a similar approach).
\begin{thm}\label{thm:rate-of-convergence-fully-discrete}
Let the assumptions of Theorem~\ref{thm:rate-of-convergence} hold.
Then, for the solution of the fully discrete problem 
\eqref{eq:fully-discrete-weak-Brinkman-Forchheimer} there exists 
$\wh{C}(\ubu,p) > 0$ depending only on
$C(\ubu), C(\partial_t\,\ubu)$, $C(\partial_{tt}\,\ubu), C(p)$, $C(\partial_t\,p), |\Omega|$, $\|\bi_\rho\|$, $\|\bi_4\|, \nu, \tD, \tF, \kappa, \beta_\ttd, T, \|\f\|_{\ell^2(0,T;\bH^{-1}(\Omega))}$, and $\|\bu_0\|_{\bH^1(\Omega)}$, such that
\begin{align}\label{eq:rate-estimate-u-omega-p-fully-discrete}
\|\be_{\bu}&\|_{\ell^\infty(0,T;\bH^1(\Omega))} 
+ \Delta\,t\,\|d_{t}\,\be_{\bu}\|_{\ell^2(0,T;\bH^1(\Omega))} 
+ \|\be_{\bu}\|_{\ell^{2}(0,T;\bL^2(\Omega))} \nonumber \\[2ex]
& + \|\be_{\bomega}\|_{\ell^2(0,T;\bL^2(\Omega))}
+ \|\be_{p}\|_{\ell^2(0,T;\L^2(\Omega))}
\,\leq\, \wh{C}(\ubu,p)\,\left( h^s + h^{s\,(\rho-1)} + \Delta\,t \right) \,.
\end{align}
\end{thm}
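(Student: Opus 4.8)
The plan is to follow the blueprint of the semidiscrete estimate in Theorem~\ref{thm:rate-of-convergence}, replacing the continuous time derivative by the backward Euler difference $d_t$ and the continuous Gr\"onwall inequality by its discrete counterpart (Lemma~\ref{lem:discrete-Gronwall}), and inserting the consistency error controlled by \eqref{eq:bound-second-time-derivative}. First I would adopt the error decomposition \eqref{eq:error-decomposition}--\eqref{eq:delta-eta-definition} at each time level $t_n$, writing $\be^n_{\ubu} = \bdelta^n_{\ubu} + \bbeta^n_{\ubu}$ and $\be^n_p = \bdelta^n_p + \bbeta^n_p$ with $\bbeta^n_{\ubu}\in \bV_h$. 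Testing the first equation of the fully discrete error system \eqref{eq:error-system-2} with $\ubv_h = \bbeta^n_{\ubu}$, and noting that $[\cB(\bbeta^n_{\ubu}),\bbeta^n_p]=0$, the leading term $d_t[\cE(\bbeta^n_{\ubu}),\bbeta^n_{\ubu}]$ is expanded via the identity \eqref{eq:identity-uhn}, producing $\frac12 d_t\big(\|\bbeta^n_{\bu}\|^2_{\bL^2(\Omega)} + \kappa^2\|\nabla\bbeta^n_{\bu}\|^2_{\bbL^2(\Omega)}\big)$ plus a nonnegative numerical-dissipation term that can be kept or discarded. The monotone part is handled through the discrete strong monotonicity \eqref{eq:strong-monotonicity-of-EAzh}, yielding the coercive contributions $\tD\|\bbeta^n_{\bu}\|^2_{\bL^2(\Omega)} + \tF\,C_\rho\|\bbeta^n_{\bu}\|^\rho_{\bL^\rho(\Omega)} + \nu\|\bbeta^n_{\bomega}\|^2_{\bL^2(\Omega)}$, exactly as in \eqref{eq:first-error-estimate}.

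Next I would bound the right-hand side precisely as in \eqref{eta-u}--\eqref{div-eta-u}, using Cauchy--Schwarz, H\"older, Young's inequality \eqref{eq:Young-inequality} and the continuity bound \eqref{eq:continuity-of-Az}. The new ingredient relative to the semidiscrete case is the consistency functional $[r_n(\bu),\bbeta^n_{\bu}]$ coming from backward Euler, which I would control by $\tfrac12\big(\|r_n(\bu)\|^2_{\bH^1(\Omega)} + \|\bbeta^n_{\bu}\|^2_{\bH^1(\Omega)}\big)$ and later absorb using \eqref{eq:bound-second-time-derivative} to generate the $\Delta t$ contribution. The convective difference $[\bc(\bu^n - \bu^n_h)(\ubu^n_h),\bbeta^n_{\bu}]$ is split as in \eqref{eta-w}, with the boundedness of $\|\bu^n\|_{\bH^1(\Omega)}$ and $\|\bu_h\|_{\ell^\infty(0,T;\bH^1(\Omega))}$ by data supplied by \eqref{eq:stability-bound-u-om-reduced} and \eqref{eq:fully-discrete-stability-bound}. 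Summing over $n=1,\dots,m$ and multiplying by $\Delta t$ yields the discrete analog of \eqref{eq:error-estimate-3}.

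For the pressure error I would invoke the discrete inf-sup condition \eqref{eq:discrete-inf-sup-condition}, bounding $\beta_\ttd\|\bbeta^n_p\|_{\L^2(\Omega)}$ through the first equation of \eqref{eq:error-system-2}; this introduces $\|d_t\be^n_{\bu}\|_{\bH^1(\Omega)}$, so, as in the semidiscrete proof, I must separately estimate $\|d_t\bbeta^n_{\bu}\|_{\bH^1(\Omega)}$. To this end I would take the time difference of the $\bpsi_h$-row of \eqref{eq:error-system-2} and test with $\ubv_h = (d_t\bbeta^n_{\bu},\bbeta^n_{\bomega})$, which is admissible since $d_t\bbeta^n_{\bu}\in\bK_h$ by linearity of the discrete kernel (the discrete counterpart of \eqref{eq:dt-vh-discrete-kernel}), so the pressure term drops. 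The Forchheimer and convective differences tested against $d_t\bbeta^n_{\bu}$ are handled with \eqref{eq:continuity-bound-2} and the $\bH^1$--$\bL^\rho$, $\bH^1$--$\bL^4$ injections as in \eqref{eq:Forchheimer-error-bound}--\eqref{eq:Convective-error-bound}, giving the discrete version of \eqref{eq:error-estimate-dt-etau}.

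Collecting these estimates, summing in time, and applying Lemma~\ref{lem:discrete-Gronwall} in place of the continuous Gr\"onwall inequality gives the discrete analog of \eqref{eq:full-error-estimate-preliminar}; bounding the initial error $\|\bbeta^0_{\ubu}\|$ through the discrete initial-condition system exactly as in \eqref{eq:error-estimate-sol0}, then taking the infimum over $\bV_h$ and $\H^p_h$ and using the approximation properties \eqref{eq:inf-approximation-properties} together with \eqref{eq:bound-second-time-derivative}, produces the claimed rate $h^s + h^{s(\rho-1)} + \Delta t$. The main obstacle will be the careful treatment of the Forchheimer nonlinearity under time differencing: unlike the stability proof, where the monotone inequality \eqref{eq:stability-6-h} suffices, the error analysis requires controlling the difference $|\bu^n|^{\rho-2}\bu^n - |\bu^n_h|^{\rho-2}\bu^n_h$ tested against $d_t\bbeta^n_{\bu}$ uniformly in $n$, which relies on the local-Lipschitz bound \eqref{eq:continuity-bound-2} and on keeping all $\rho$-dependent powers consistent, so that the sequences entering Lemma~\ref{lem:discrete-Gronwall} remain nonnegative and the exponential factor stays independent of $\Delta t$.
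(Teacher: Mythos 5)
Your proposal is correct and follows essentially the same route the paper takes: the paper itself only sketches this proof, stating that it follows the structure of Theorem~\ref{thm:rate-of-convergence} combined with the discrete-in-time arguments of Theorem~\ref{thm:well-posed-result-fully-discrete-problem}, the discrete Gr\"onwall inequality (Lemma~\ref{lem:discrete-Gronwall}), and the truncation-error bound \eqref{eq:bound-second-time-derivative}. Your write-up fills in precisely those steps (the identity \eqref{eq:identity-uhn} for the $d_t$ term, absorption of the consistency functional $[r_n(\bu),\bbeta^n_{\bu}]$, the discrete kernel argument for $d_t\bbeta^n_{\bu}$, and the discrete inf-sup for the pressure), so it is a faithful and complete realization of the paper's intended argument.
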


\begin{rem}
For the fully discrete scheme \eqref{eq:fully-discrete-weak-Brinkman-Forchheimer} we have considered the backward Euler method
only for the sake of simplicity. The analysis developed in Section \ref{sec:fully-discrete-approximation}
can be adapted to other time discretizations, such as BDF schemes or the Crank-Nicholson method.
\end{rem}


\section{Numerical results}\label{sec:numerical-results}

In this section we present three numerical results that illustrate the performance 
of the fully discrete method \eqref{eq:fully-discrete-weak-Brinkman-Forchheimer}.
The implementation is based on a {\tt FreeFem++} code \cite{Hecht2012}.
We use quasi-uniform triangulations and the finite element subspaces detailed in Section \ref{sec:existence-uniqueness-solution}
(cf. \eqref{eq:Taylor-Hood}). The nonlinearly is handled using a Newton--Raphson algorithm with a fixed tolerance $\tol=1\textrm{E}-06$. The iterative process is stopped when the relative error between two consecutive iterations of the complete coefficient vector, namely $\mathbf{coeff}^{m}$ and $\mathbf{coeff}^{m+1}$,
is sufficiently small, i.e.,
\begin{equation*}
\frac{\| \mathbf{coeff}^{m+1} \,-\, \mathbf{coeff}^m \|_\DoF}{\| \mathbf{coeff}^{m+1} \|_\DoF} 
\,\leq\, \tol\,,
\end{equation*}
where $\|\cdot\|_\DoF$ stands for the usual Euclidean norm in $\R^{\DoF}$, with $\DoF$ 
denoting the total number of degrees of freedom defined by the finite element subspaces 
$\bH^\bu_h, \bH^\bomega_h$ and $\H^p_h$ (cf. \eqref{eq:Taylor-Hood}).

Examples~1 and 2 are used to corroborate the rate of convergence in two and three dimensional domains, respectively. 
The total simulation time for these examples is $T=0.001$ and the time step is $\Delta\,t = 10^{-4}$.
The time step is sufficiently small, so that the time discretization error does not affect the convergence rates.
On the other hand, Example 3 is utilized to analyze the method's behavior under various scenarios, considering different Darcy and Forchheimer coefficients, as well as varying values of the elasticity parameter $\kappa$.
For these cases, the total simulation time and the time step are chosen as $T = 1$ and $\Delta\,t = 10^{-2}$, respectively.


\subsection*{Example 1: Two-dimensional smooth exact solution}

In this test we study the convergence for the space discretization using an analytical solution. 
The domain is the square $\Omega=(0,1)^2$. We consider $\rho=3, \nu = 1, \tD=1, \tF=10, \kappa=1$, 
and the datum $\f$ is adjusted so that the exact solution is given by the smooth functions
\begin{equation*}
\qquad \bu \,=\, \exp(t)\begin{pmatrix}
\sin(\pi x)\cos(\pi y)\\
-\cos(\pi x)\sin(\pi y) 
\end{pmatrix} \qan
p \,=\, \exp(t)\cos(\pi x)\sin\Big(\dfrac{\pi y}{2}\Big)\,.
\end{equation*}
The model problem is then complemented with the appropriate Dirichlet boundary condition and
initial data.

In Figure \ref{fig:example1} we display the solution obtained with the Crouzeix--Raviart-based approximation, $39,146$ triangle elements and $176,926\,\DoF$ at time $T = 0.001$.  
Table~\ref{table1-example1} shows the convergence history for a sequence of quasi-uniform mesh refinements, including the average number of Newton iterations. 
The results confirm that the optimal spatial rates of convergence
$\cO(h^{k+1})$ provided by Theorem \ref{thm:rate-of-convergence-fully-discrete} 
(see also Theorem \ref{thm:rate-of-convergence}) are attained for the Taylor--Hood based scheme, with $k=1$.
In addition, optimal order $\cO(h)$ is also obtained for the MINI-element and Crouzeix--Raviart based discretizations. The Newton's method exhibits behavior independent of
the mesh size, converging in $2.1$ iterations in average in all cases.

\begin{figure}
\begin{center}		
\includegraphics[width=5cm]{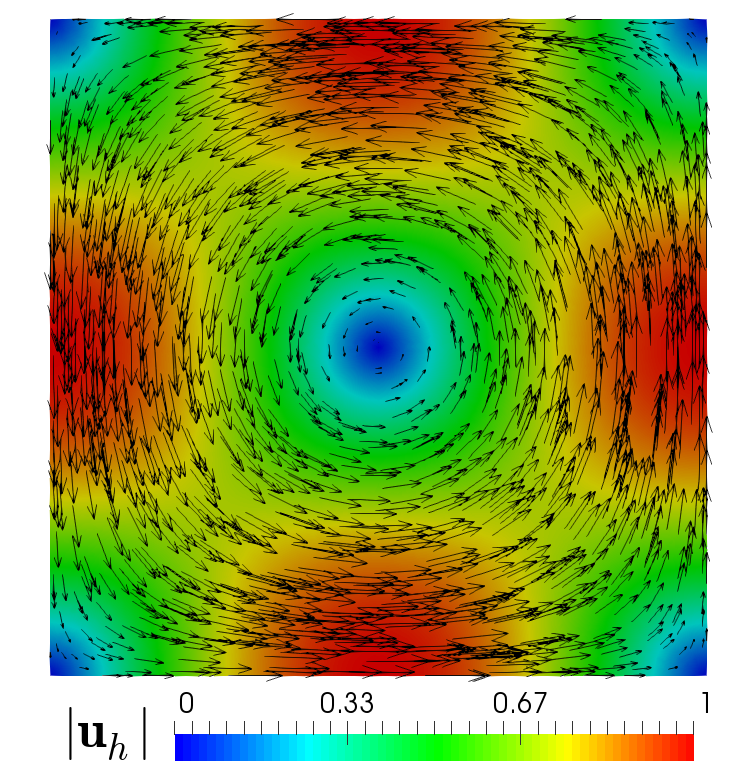}
\includegraphics[width=5cm]{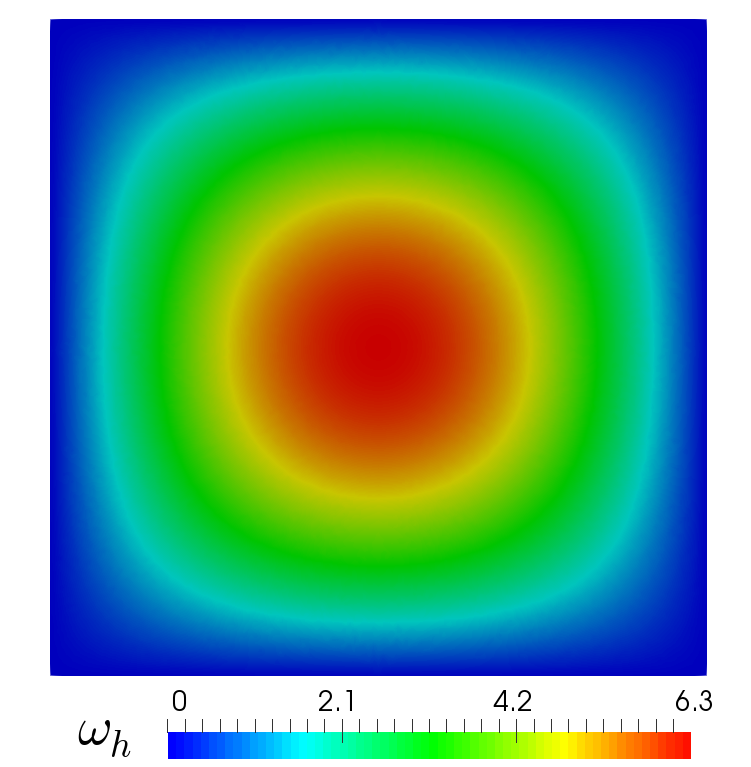}	
\includegraphics[width=5cm]{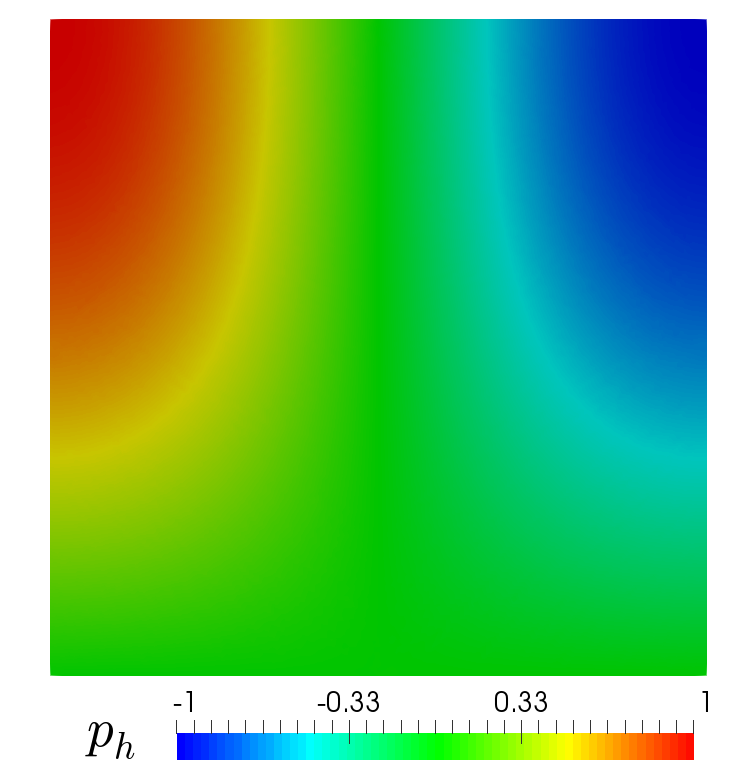}
		
\caption{[Example 1] Computed magnitude of the velocity, vorticity and pressure fields at time $T=0.001$.}\label{fig:example1}
\end{center}
\end{figure}

\begin{table}
\begin{center}	
\resizebox{\textwidth}{!}{
\begin{tabular}{|r|c|c||c c|c c|c c|c c|}
\hline	
\multicolumn{11}{|c|}{Taylor--Hood-based discretization}  \\	
\hline
& & & \multicolumn{2}{c|}{$\|\be_{\bu}\|_{\ell^\infty(0,T;\bH^1(\Omega))}$} 
& \multicolumn{2}{c|}{$\|\be_{\bu}\|_{\ell^{2}(0,T;\bL^2(\Omega))}$} 
& \multicolumn{2}{c|}{$\|\be_{\bomega}\|_{\ell^2(0,T;\bL^2(\Omega))}$} 
& \multicolumn{2}{c|}{$\|\be_{p}\|_{\ell^2(0,T;\L^2(\Omega))}$} \\
$\DoF$ & $h$ & $\iter$  &  error   & rate   &  error   & rate   &  error   & rate   &  error   & rate   \\ \hline \hline
232    & 0.373 & 2.1 & 1.95E-01 &   --  & 2.03E-04 &   --  & 6.37E-03 &   --  & 3.76E-00 &   --  \\
860    & 0.196 & 2.1 & 3.57E-02 & 2.653 & 1.80E-05 & 3.782 & 1.18E-03 & 2.636 & 5.15E-01 & 3.105 \\
3216   & 0.097 & 2.1 & 8.69E-03 & 2.001 & 2.17E-06 & 2.998 & 2.90E-04 & 1.986 & 9.16E-02 & 2.448 \\
12468  & 0.048 & 2.1 & 2.00E-03 & 2.074 & 2.51E-07 & 3.049 & 6.76E-05 & 2.058 & 1.42E-02 & 2.631 \\
49142  & 0.025 & 2.1 & 5.21E-04 & 2.013 & 3.29E-08 & 3.042 & 1.78E-05 & 2.000 & 4.03E-03 & 1.888 \\
197270 & 0.013 & 2.1 & 1.27E-04 & 2.160 & 3.94E-09 & 3.252 & 4.30E-06 & 2.176 & 8.17E-04 & 2.447 \\
\hline 
\end{tabular}
}	

\medskip
	
\resizebox{\textwidth}{!}{
\begin{tabular}{|r|c|c||c c|c c|c c|c c|}
\hline	
\multicolumn{11}{|c|}{MINI-element-based discretization}  \\	
\hline
& & & \multicolumn{2}{c|}{$\|\be_{\bu}\|_{\ell^\infty(0,T;\bH^1(\Omega))}$} 
& \multicolumn{2}{c|}{$\|\be_{\bu}\|_{\ell^{2}(0,T;\bL^2(\Omega))}$} 
& \multicolumn{2}{c|}{$\|\be_{\bomega}\|_{\ell^2(0,T;\bL^2(\Omega))}$} 
& \multicolumn{2}{c|}{$\|\be_{p}\|_{\ell^2(0,T;\L^2(\Omega))}$} \\
$\DoF$ & $h$ & $\iter$  &  error   & rate   &  error   & rate   &  error   & rate   &  error   & rate   \\ \hline \hline
180    & 0.373 & 2.1 & 1.22E-00 &   --  & 1.23E-03 &   --  & 9.50E-03 &   --  & 2.52E+01 &   --  \\
676    & 0.196 & 2.1 & 6.62E-01 & 0.961 & 2.95E-04 & 2.224 & 2.43E-03 & 2.132 & 6.37E-00 & 2.149 \\
2548   & 0.097 & 2.1 & 3.30E-01 & 0.985 & 7.39E-05 & 1.962 & 8.68E-04 & 1.456 & 3.31E-00 & 0.925 \\
9924   & 0.048 & 2.1 & 1.68E-01 & 0.957 & 1.87E-05 & 1.943 & 3.62E-04 & 1.236 & 1.50E-00 & 1.125 \\
39212  & 0.025 & 2.1 & 8.47E-02 & 1.024 & 4.69E-06 & 2.068 & 1.70E-04 & 1.130 & 7.25E-01 & 1.084 \\
157612 & 0.013 & 2.1 & 4.14E-02 & 1.098 & 1.15E-06 & 2.162 & 7.52E-05 & 1.252 & 3.51E-01 & 1.110 \\
\hline 
\end{tabular}
}	

\medskip

\resizebox{\textwidth}{!}{
\begin{tabular}{|r|c|c||c c|c c|c c|c c|}
\hline	
\multicolumn{11}{|c|}{Crouzeix--Raviart-based discretization}  \\	
\hline
& & & \multicolumn{2}{c|}{$\|\be_{\bu}\|_{\ell^\infty(0,T;\bH^1(\Omega))}$} 
& \multicolumn{2}{c|}{$\|\be_{\bu}\|_{\ell^{2}(0,T;\bL^2(\Omega))}$} 
& \multicolumn{2}{c|}{$\|\be_{\bomega}\|_{\ell^2(0,T;\bL^2(\Omega))}$} 
& \multicolumn{2}{c|}{$\|\be_{p}\|_{\ell^2(0,T;\L^2(\Omega))}$} \\
$\DoF$ & $h$ & $\iter$  &  error   & rate   &  error   & rate   &  error   & rate   &  error   & rate   \\ \hline \hline
187    & 0.373 & 2.1 & 7.59E-01 &   --  & 1.05E-03 &   --  & 1.45E-02 &   --  & 9.57E-00 &   --  \\
733    & 0.196 & 2.1 & 3.87E-01 & 1.050 & 2.61E-04 & 2.170 & 4.82E-03 & 1.722 & 5.67E-00 & 0.819 \\
2815   & 0.097 & 2.1 & 1.95E-01 & 0.974 & 6.59E-05 & 1.951 & 1.74E-03 & 1.447 & 3.04E-00 & 0.885 \\
11065  & 0.048 & 2.1 & 9.86E-02 & 0.962 & 1.70E-05 & 1.918 & 7.61E-04 & 1.168 & 1.47E-00 & 1.023 \\
43918  & 0.025 & 2.1 & 4.93E-02 & 1.037 & 4.22E-06 & 2.083 & 3.75E-04 & 1.057 & 7.91E-01 & 0.930 \\
176926 & 0.013 & 2.1 & 2.44E-02 & 1.081 & 1.03E-06 & 2.167 & 1.70E-04 & 1.219 & 3.76E-01 & 1.138 \\
\hline 
\end{tabular}
}	
	
\caption{[Example 1] Number of degrees of freedom, mesh sizes, average number of Newton iterations, errors, and rates of convergence with $\rho=3, \nu=1, \tD=1, \tF=10$, and $\kappa=1$.}\label{table1-example1}
\end{center}
\end{table}


\subsection*{Example 2: Three-dimensional smooth exact solution}

In the second example, we consider the cube domain $\Omega = (0,1)^3$ and the exact solution
\begin{equation*}
\bu \,=\, \exp(t)
\left(\begin{array}{c}
\sin(\pi\,x)\cos(\pi\,y)\cos(\pi\,z) \\ 
-2\,\cos(\pi\,x)\sin(\pi\,y)\cos(\pi\,z) \\
\cos(\pi\,x)\cos(\pi\,y)\sin(\pi\,z)
\end{array}\right)\qan
p \,=\, \exp(t)\,(x - 0.5)^3\sin(y+z)\,.
\end{equation*}
Similarly to the first example, we consider the parameters $\rho=4, \nu=1, \tD=1, \tF=10$, and $\kappa=1$, and the right-hand side function $\f$ is computed from \eqref{eq:KVBF-2} using the above solution.

The numerical solution obtained with the Taylor--Hood-based approximation, $63,888$ tetrahedral elements and $322,043\, \DoF$ at time $T = 0.001$  is shown in
Figure~\ref{fig:example2}. The convergence history for a set of
quasi-uniform mesh refinements using Taylor--Hood and MINI-element based approximations is shown in
Table~\ref{table2-example2}. Again, the mixed finite element method
converges optimally with order $\cO(h^2)$ and $\cO(h)$, respectively, as it was proved by Theorem~\ref{thm:rate-of-convergence-fully-discrete} 
(see also Theorem~\ref{thm:rate-of-convergence}).

\begin{figure}
\begin{center}		
\includegraphics[width=5cm]{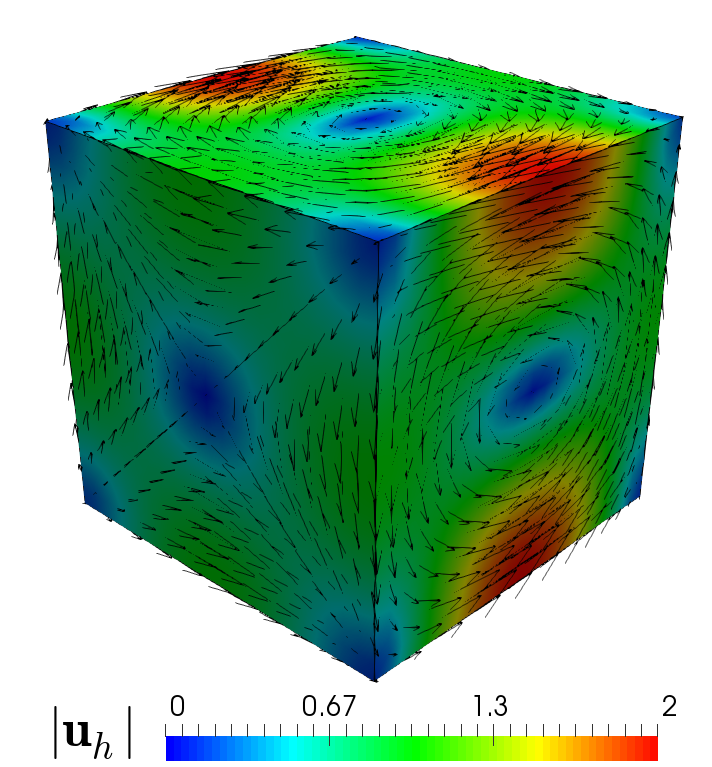}
\includegraphics[width=5cm]{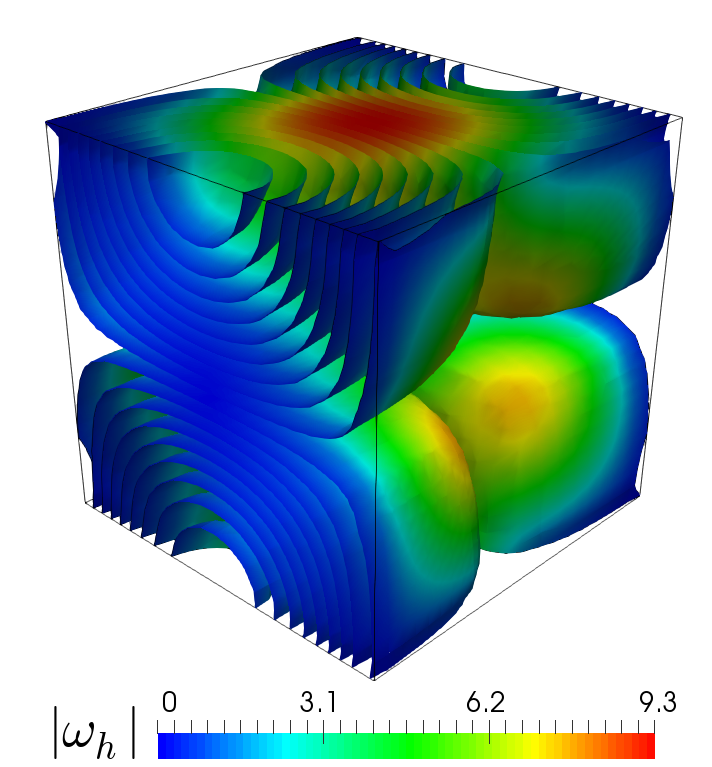}	
\includegraphics[width=5cm]{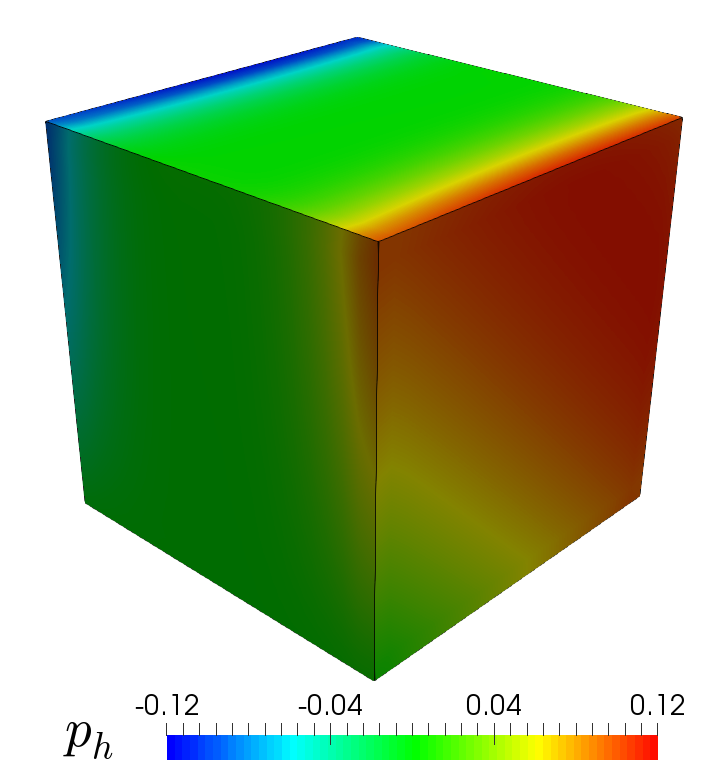}
		
\caption{[Example 2] Computed magnitude of the velocity and vorticity, and pressure field at time $T=0.001$.}\label{fig:example2}
\end{center}
\end{figure}

\begin{table}
\begin{center}	
\resizebox{\textwidth}{!}{
\begin{tabular}{|r|c|c||c c|c c|c c|c c|}
\hline	
\multicolumn{11}{|c|}{Taylor--Hood based discretization}  \\	
\hline
& & & \multicolumn{2}{c|}{$\|\be_{\bu}\|_{\ell^\infty(0,T;\bH^1(\Omega))}$} 
& \multicolumn{2}{c|}{$\|\be_{\bu}\|_{\ell^{2}(0,T;\bL^2(\Omega))}$} 
& \multicolumn{2}{c|}{$\|\be_{\bomega}\|_{\ell^2(0,T;\bL^2(\Omega))}$} 
& \multicolumn{2}{c|}{$\|\be_{p}\|_{\ell^2(0,T;\L^2(\Omega))}$} \\
$\DoF$ & $h$ & $\iter$  &  error   & rate   &  error   & rate   &  error   & rate   &  error   & rate   \\ \hline \hline
483    & 0.707 & 2.1 & 1.56E-00 &   --  & 2.76E-03 &   --  & 4.48E-02 &   --  & 6.44E+01 &   --  \\
2687   & 0.354 & 2.1 & 4.36E-01 & 1.842 & 3.79E-04 & 2.869 & 1.33E-02 & 1.750 & 4.33E-00 & 3.895 \\
17655  & 0.177 & 2.1 & 1.12E-01 & 1.956 & 4.89E-05 & 2.952 & 3.09E-03 & 2.106 & 2.75E-01 & 3.976 \\
86667  & 0.101 & 2.1 & 3.69E-02 & 1.988 & 9.21E-06 & 2.984 & 9.83E-04 & 2.047 & 3.05E-02 & 3.930 \\
322043 & 0.064 & 2.1 & 1.50E-02 & 1.996 & 2.38E-06 & 2.994 & 3.95E-04 & 2.019 & 5.21E-03 & 3.909 \\
\hline 
\end{tabular}
}	

\medskip

\resizebox{\textwidth}{!}{
\begin{tabular}{|r|c|c||c c|c c|c c|c c|}
\hline	
\multicolumn{11}{|c|}{MINI-element based discretization}  \\	
\hline
& & & \multicolumn{2}{c|}{$\|\be_{\bu}\|_{\ell^\infty(0,T;\bH^1(\Omega))}$} 
& \multicolumn{2}{c|}{$\|\be_{\bu}\|_{\ell^{2}(0,T;\bL^2(\Omega))}$} 
& \multicolumn{2}{c|}{$\|\be_{\bomega}\|_{\ell^2(0,T;\bL^2(\Omega))}$} 
& \multicolumn{2}{c|}{$\|\be_{p}\|_{\ell^2(0,T;\L^2(\Omega))}$} \\
$\DoF$ & $h$ & $\iter$  &  error   & rate   &  error   & rate   &  error   & rate   &  error   & rate   \\ \hline \hline
333    & 0.707 & 2.1 & 7.55E-00 &   --  & 1.08E-02 &   --  & 6.97E-02 &   --  & 1.27E+03 &   --  \\
2027   & 0.354 & 2.1 & 4.53E-00 & 0.738 & 3.52E-03 & 1.615 & 2.36E-02 & 1.563 & 6.43E+02 & 0.986 \\
14319  & 0.177 & 2.1 & 2.27E-00 & 0.999 & 8.77E-04 & 2.004 & 6.58E-03 & 1.842 & 1.87E+02 & 1.777 \\
73017  & 0.101 & 2.1 & 1.29E-00 & 1.010 & 2.79E-04 & 2.046 & 2.32E-03 & 1.862 & 6.79E+01 & 1.812 \\
276833 & 0.064 & 2.1 & 8.17E-01 & 1.007 & 1.12E-04 & 2.023 & 1.01E-03 & 1.848 & 3.02E+01 & 1.791 \\
\hline 
\end{tabular}
}	
\caption{[Example 2] Number of degrees of freedom, mesh sizes, average number of Newton iterations, errors, and rates of convergence with $\rho=4, \nu=1, \tD=1, \tF=10$, and $\kappa=1$.}\label{table2-example2}
\end{center}
\end{table}


\subsection*{Example 3: Flow through porous media with channel network}

Finally, inspired by \cite[Section 5.2.4]{akny2019}, we
focus on a flow through a porous medium with
a channel network.  
We consider the square domain $\Omega = (-1,1)^2$ with an internal channel network denoted as $\Omega_{\rc}$. 
The domain configuration and the prescribed mesh are described in the plots of the first column of Figure \ref{fig:example3-uh-omh-ph-T0-T20-Tend}. 
First, we consider the Kelvin--Voigt--Brinkman--Forchheimer
model \eqref{eq:KVBF-2} in the whole domain $\Omega$, 
with parameters $\rho=3, \nu=1$, and $\kappa=1$ but with different values of the parameters 
$\tD$ and $\tF$ for the interior and the exterior of the channel, that is,
\begin{equation}\label{eq:parameters-D-F-in-channel-network}
\tD \,=\, \left\{\begin{array}{rl}
1 &\mbox{in }\,\, \Omega_{\rc} \\
1000 &\mbox{in }\,\, \ov{\Omega}\setminus \Omega_{\rc}
\end{array}\right. \qan
\tF \,=\, \left\{\begin{array}{rl}
10 &\mbox{in }\,\, \Omega_{\rc} \\
1 &\mbox{in }\,\, \ov{\Omega}\setminus \Omega_{\rc}
\end{array}\right..
\end{equation}
The parameter choice corresponds to high permeability ($\tD = 1$)
in the channel and increased inertial effect ($\tF = 10$), compared to
low permeability ($\tD = 1000$) in the porous medium and reduced
inertial effect ($\tF = 1$).  In addition, the body force term is $\f
= \0$, the initial condition is zero, and the boundaries conditions
are
\begin{equation*}
\bu\cdot\bn = 0.2,\quad \bu\cdot\bt = 0 \qon \Gamma_{\mathrm{left}},\quad
\left( \kappa^2\,\dfrac{\partial\,\nabla\bu}{\partial\,t} - p\,\bI \right)\bn + \nu\,\bomega\bt = \0 \qon \Gamma\setminus\Gamma_{\mathrm{left}}\,,
\end{equation*}
which corresponds to inflow on the left boundary and zero viscoelastic stress outflow on the rest of the boundary.

In Figure \ref{fig:example3-uh-omh-ph-T0-T20-Tend} we display the computed magnitude of the velocity, vorticity and pressure at times $T=0.01$, $T=0.2$, and $T=1$, which were obtained using the MINI-element-based approximation on a mesh with $27,287$ 
triangle elements and $109,682\,\DOF$. 
As expected, we observe a faster flow through the channel network, accompanied by a significant change in vorticity across the interface between the channel and the porous medium.
The pressure field decreases as time increases.
This example illustrates the Kelvin--Voigt--Brinkman--Forchheimer model's capability to handle heterogeneous media with spatially varying parameters. It also demonstrates our three-field mixed finite element method's ability to resolve sharp vorticities in the presence of strong jump discontinuities in the parameters.
We further study the robustness of the method with respect to the elasticity parameter $\kappa$. 
In Figure \ref{fig:example3-uh-omh-ph} we display the computed magnitude of the velocity, vorticity, and pressure for the settings given by \eqref{eq:parameters-D-F-in-channel-network}, considering $\kappa\in \{3, 2, 1, 0.1, 0.01\}$. 
We observe that the elasticity parameter $\kappa$ has a dissipative effect, reducing the velocity in the channel and slightly affecting the pressure in the entire domain, while the vorticity increases as $\kappa$ decreases.
This study illustrates that the method produces stable and physically reasonable results across a wide range of physical parameters, such as $\tD$, $\tF$, and $\kappa$.




\begin{figure}[ht!]
\begin{flushright}
\includegraphics[width=4.1cm]{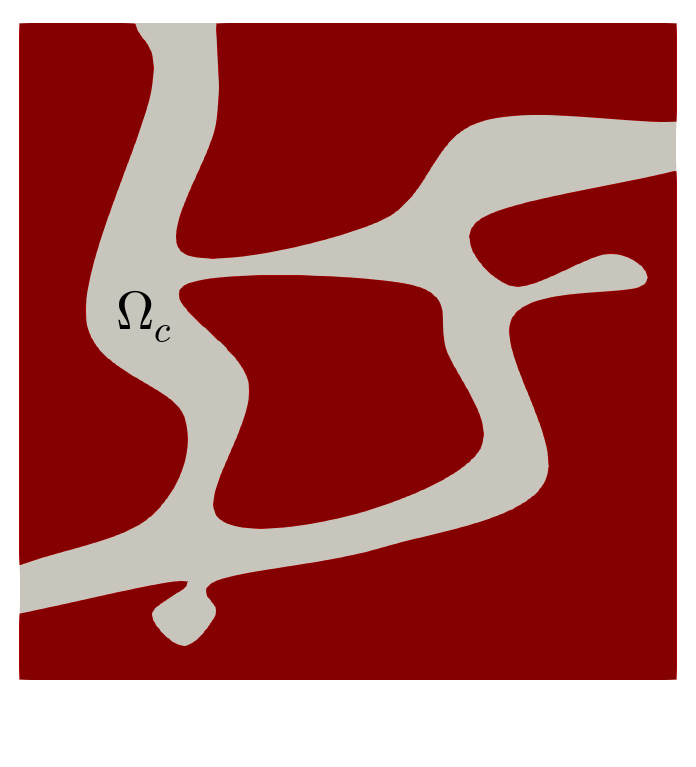}
\includegraphics[width=4.1cm]{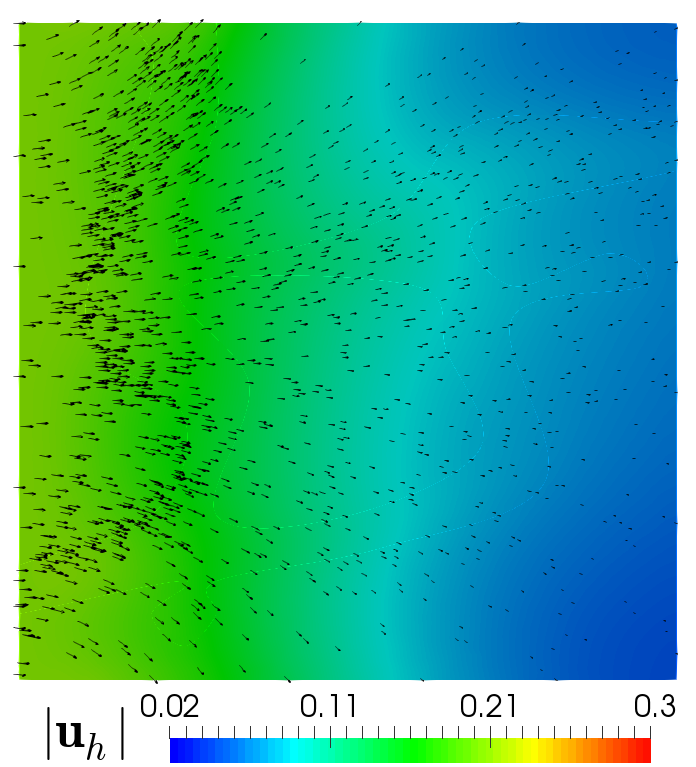}	
\includegraphics[width=4.1cm]{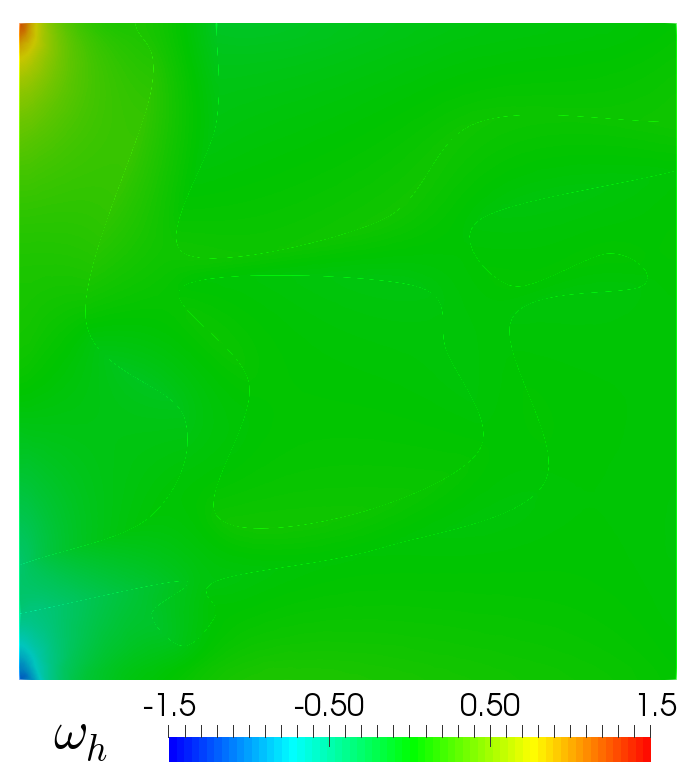}
\includegraphics[width=4.1cm]{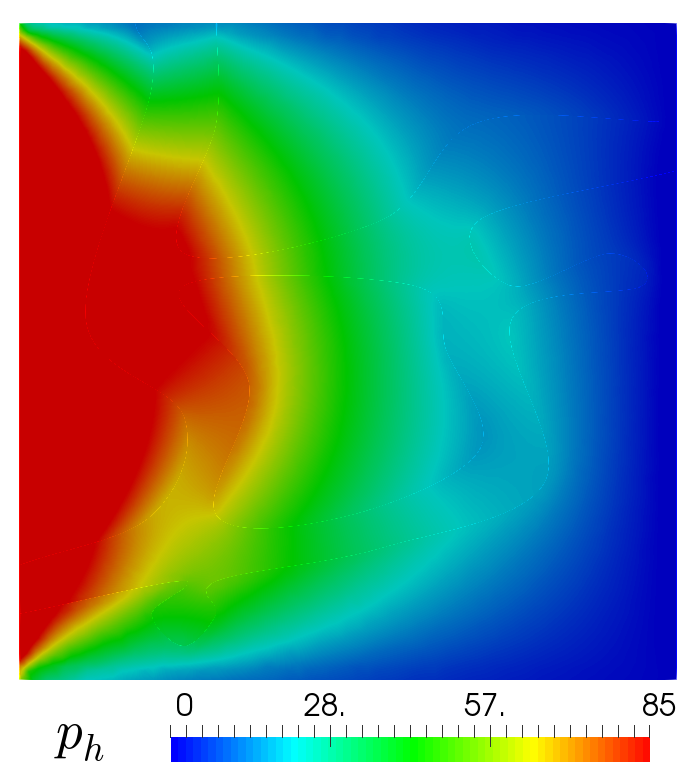}

\includegraphics[width=4.1cm]{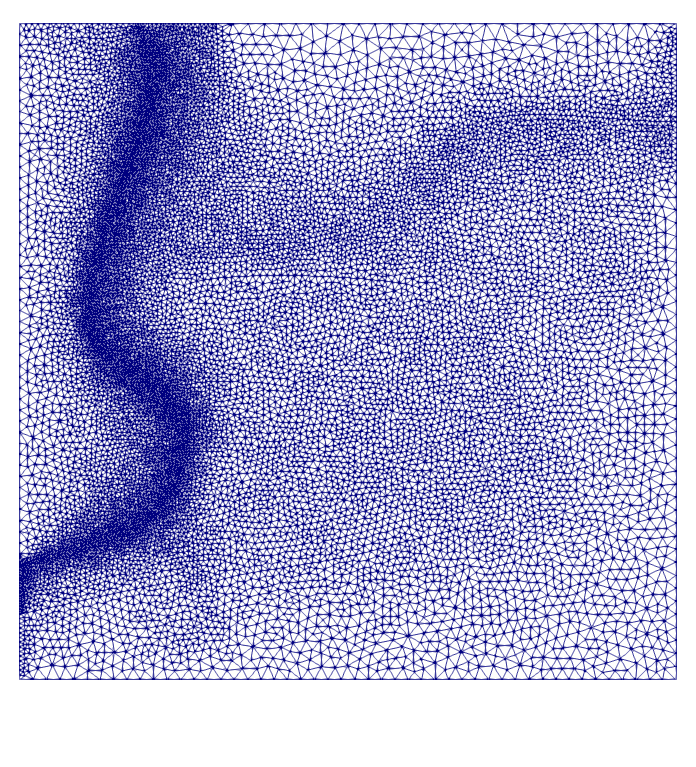}	
\includegraphics[width=4.1cm]{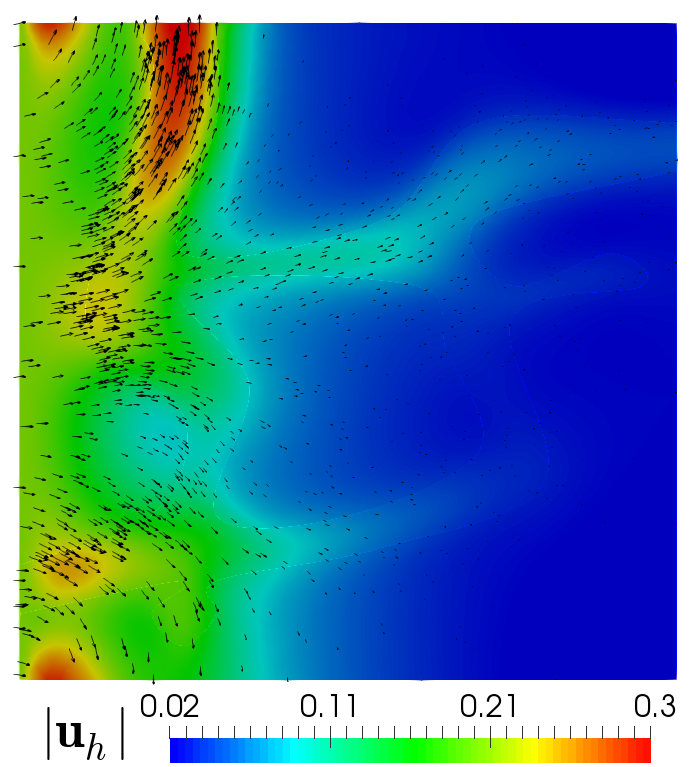}
\includegraphics[width=4.1cm]{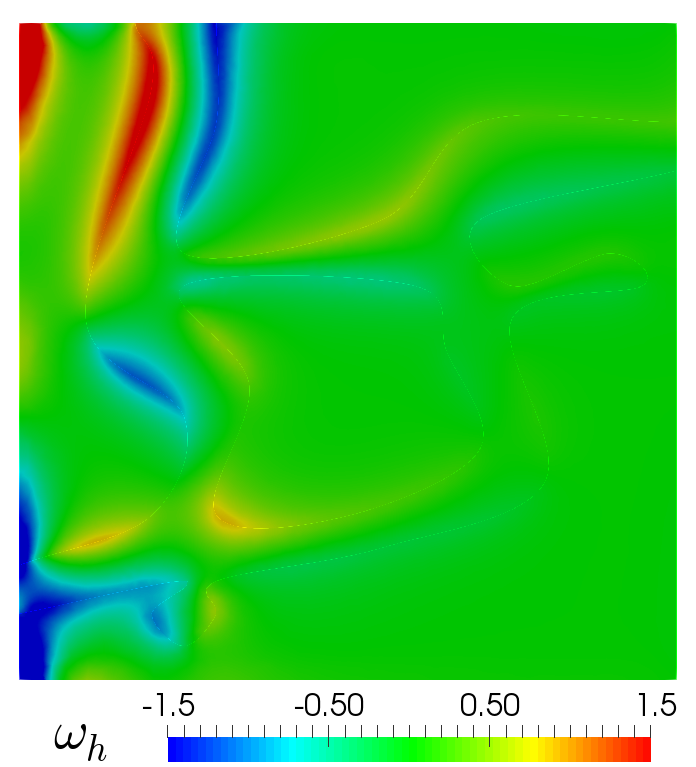}
\includegraphics[width=4.1cm]{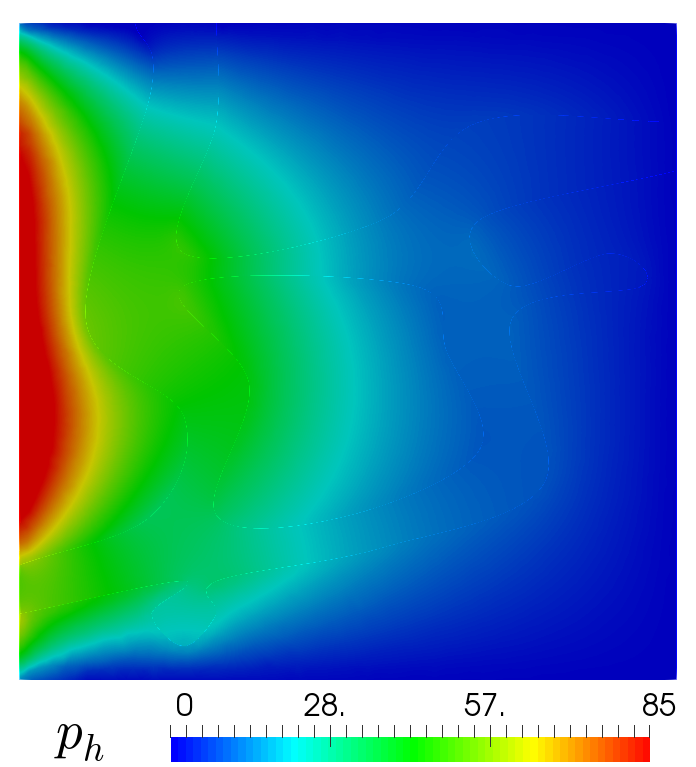}

\includegraphics[width=4.1cm]{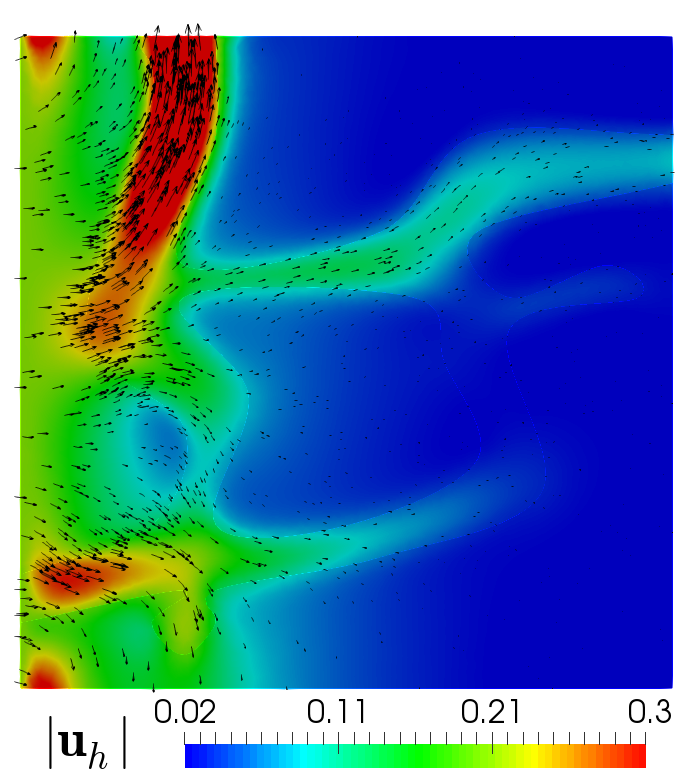}
\includegraphics[width=4.1cm]{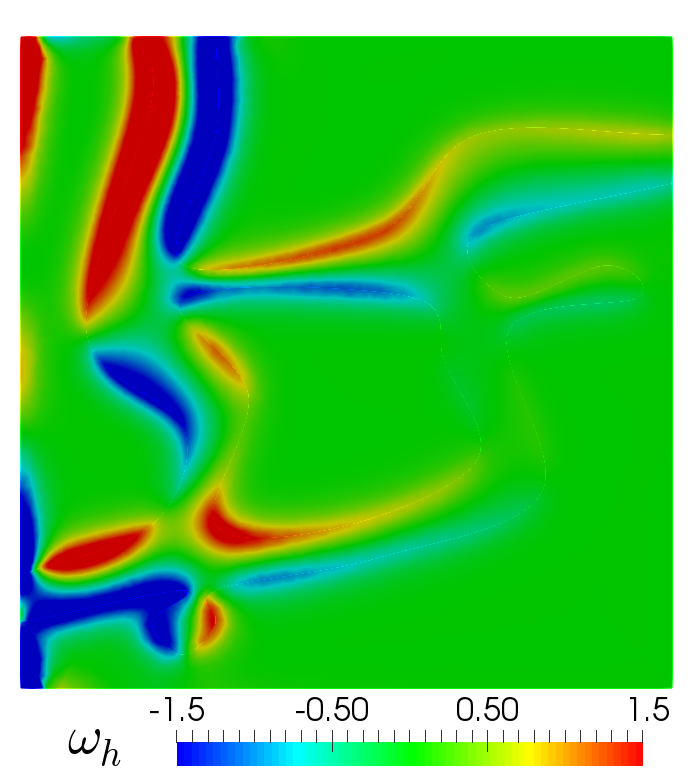}
\includegraphics[width=4.1cm]{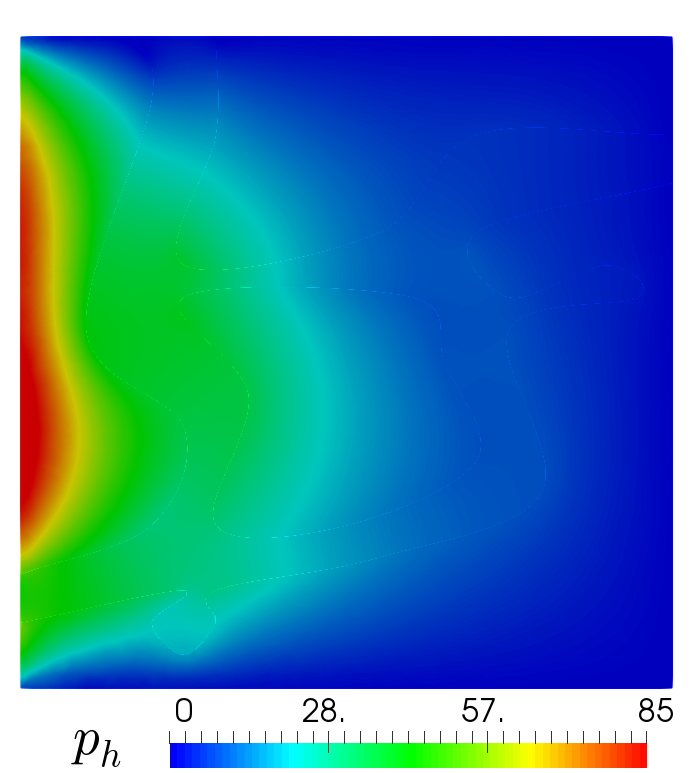}
		
\caption{[Example 3] Domain configuration and prescribed mesh (plots in first column), and computed magnitude of the velocity, vorticity and pressure field at time $T=0.01$ (top plots), at time $T=0.2$ (middle plots), and at time $T=1$ (bottom plots).}\label{fig:example3-uh-omh-ph-T0-T20-Tend}
\end{flushright}
\end{figure}

\begin{figure}[ht!]
\begin{center}		
\includegraphics[width=3.3cm]{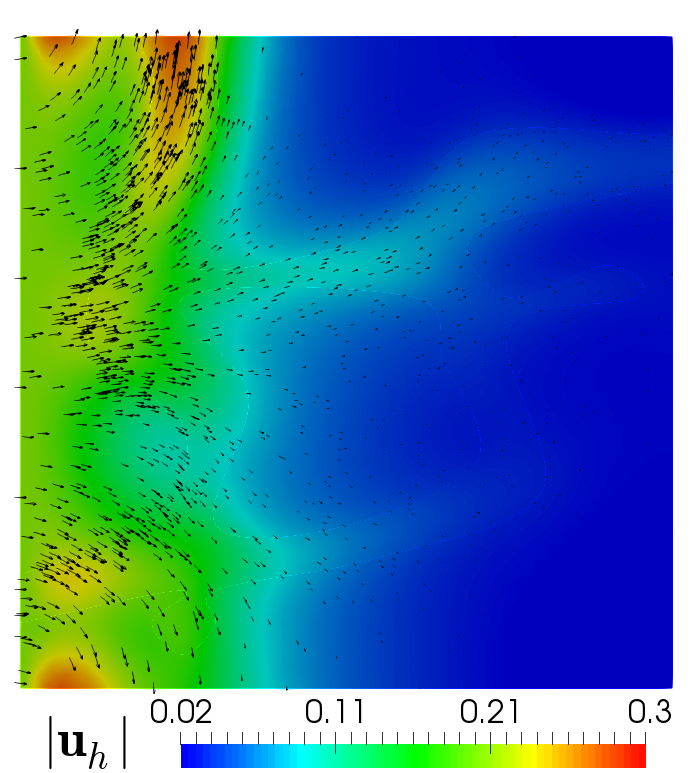}	
\includegraphics[width=3.3cm]{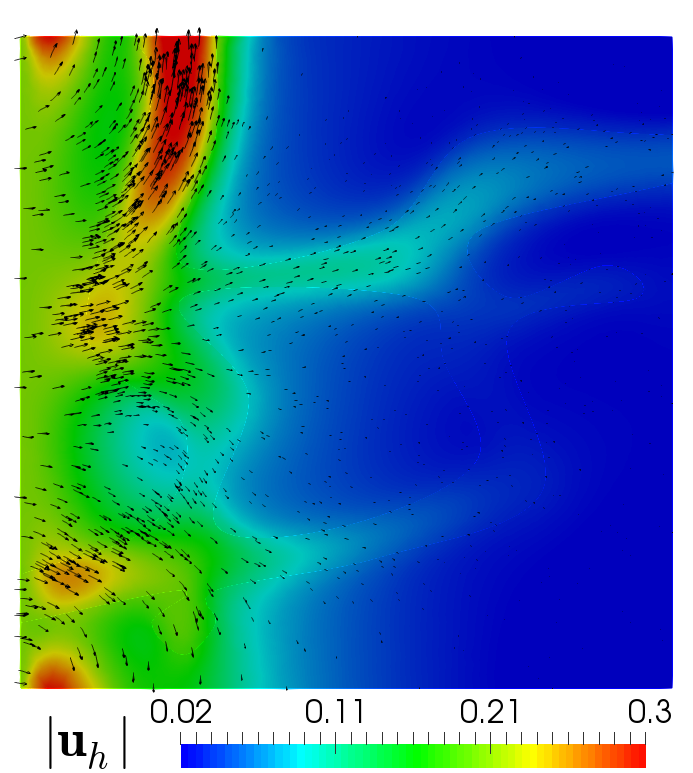}	
\includegraphics[width=3.3cm]{images/ex3-uh-magnitude-k1.png}
\includegraphics[width=3.3cm]{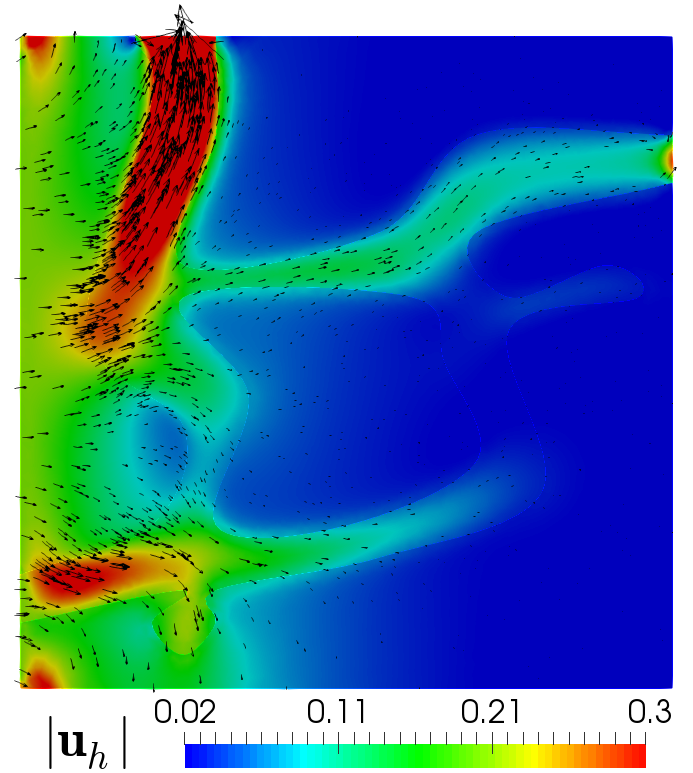}
\includegraphics[width=3.3cm]{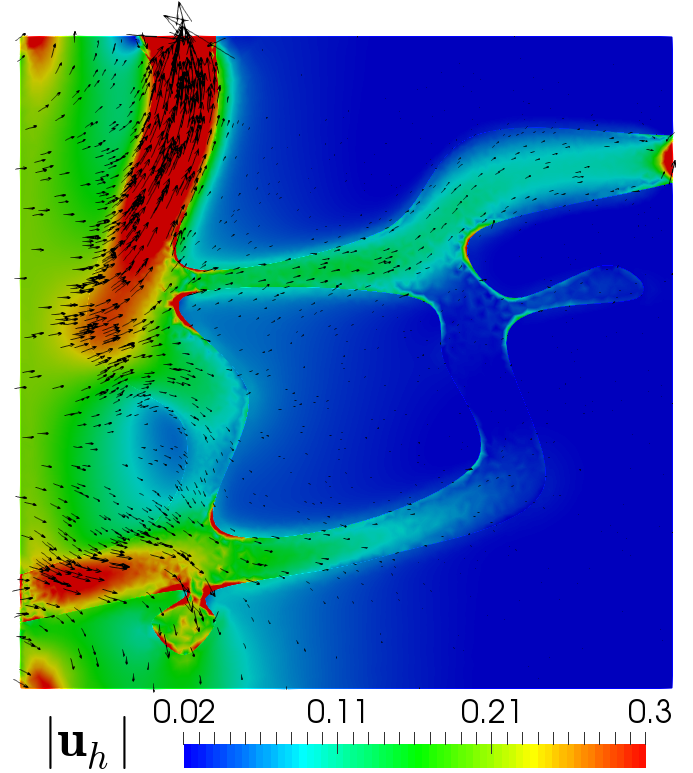}

\includegraphics[width=3.3cm]{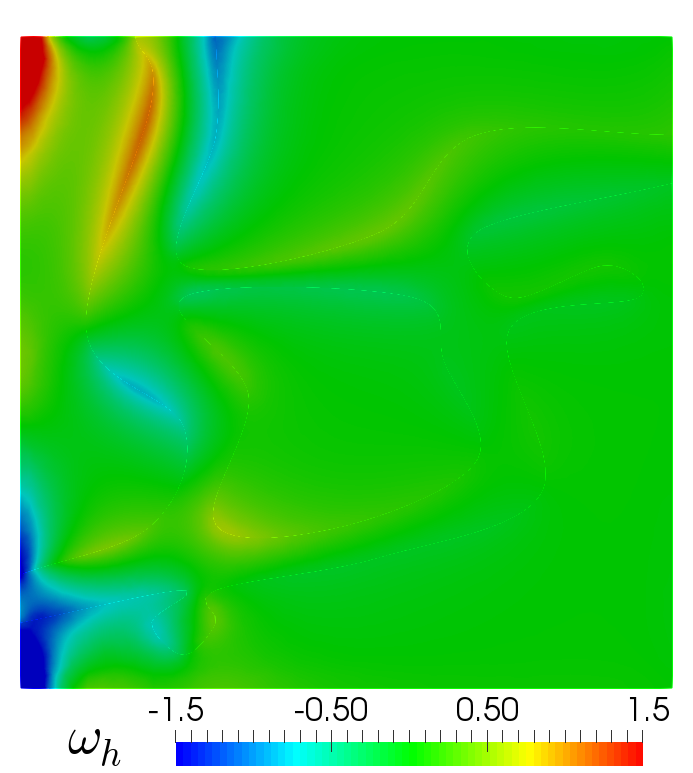}
\includegraphics[width=3.3cm]{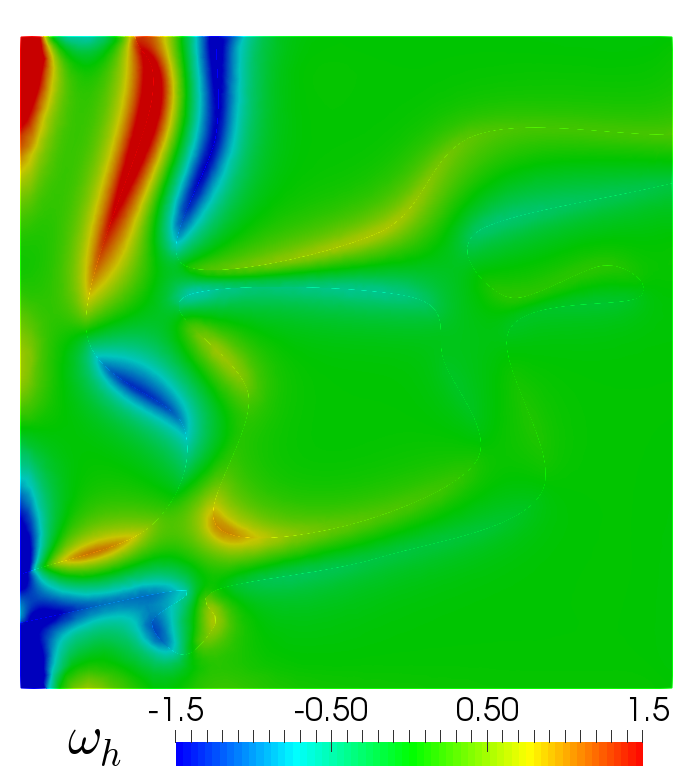}
\includegraphics[width=3.3cm]{images/ex3-omegah-k1.png}
\includegraphics[width=3.3cm]{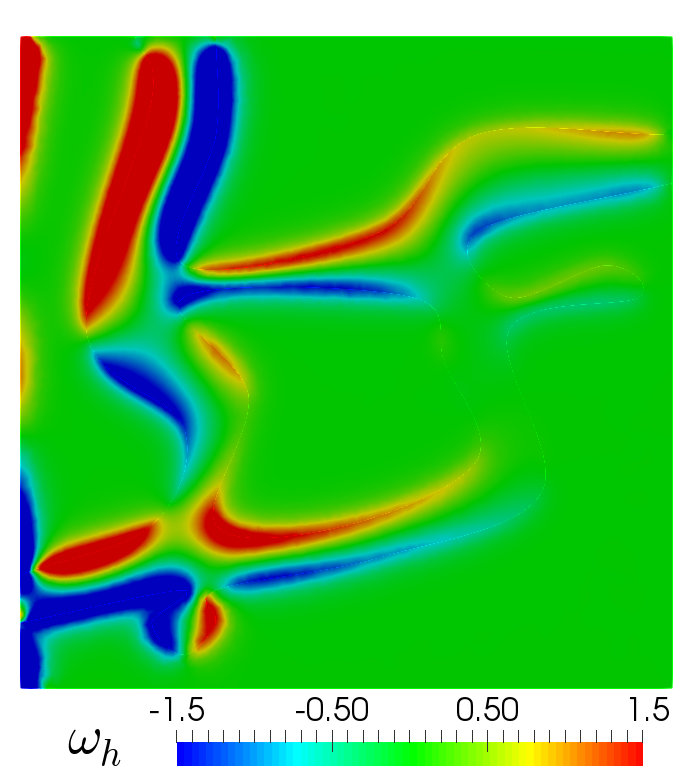}
\includegraphics[width=3.3cm]{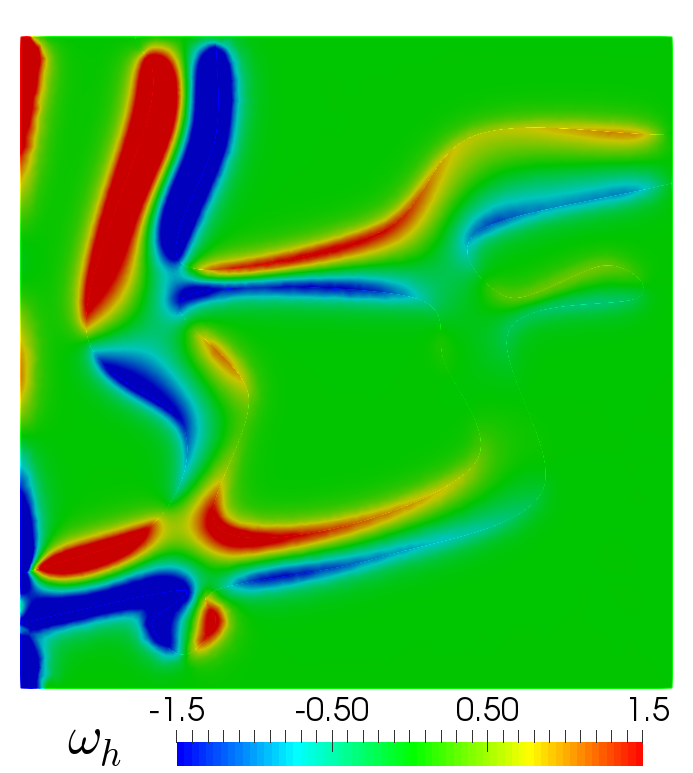}

\includegraphics[width=3.3cm]{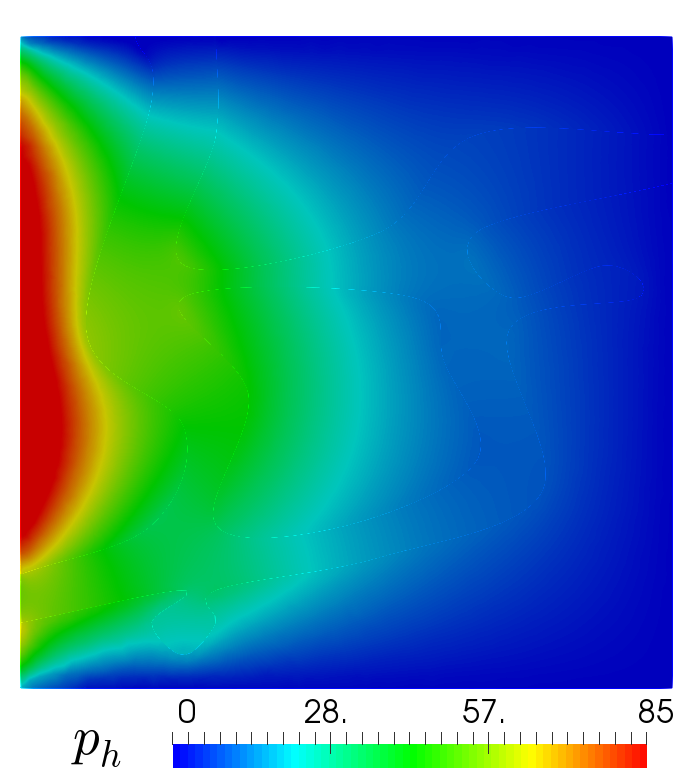}
\includegraphics[width=3.3cm]{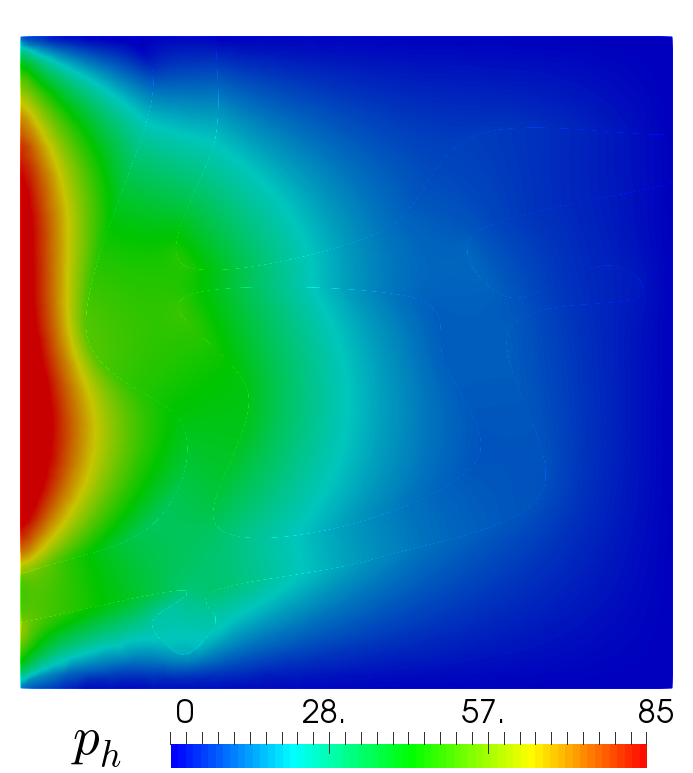}
\includegraphics[width=3.3cm]{images/ex3-ph-k1.png}
\includegraphics[width=3.3cm]{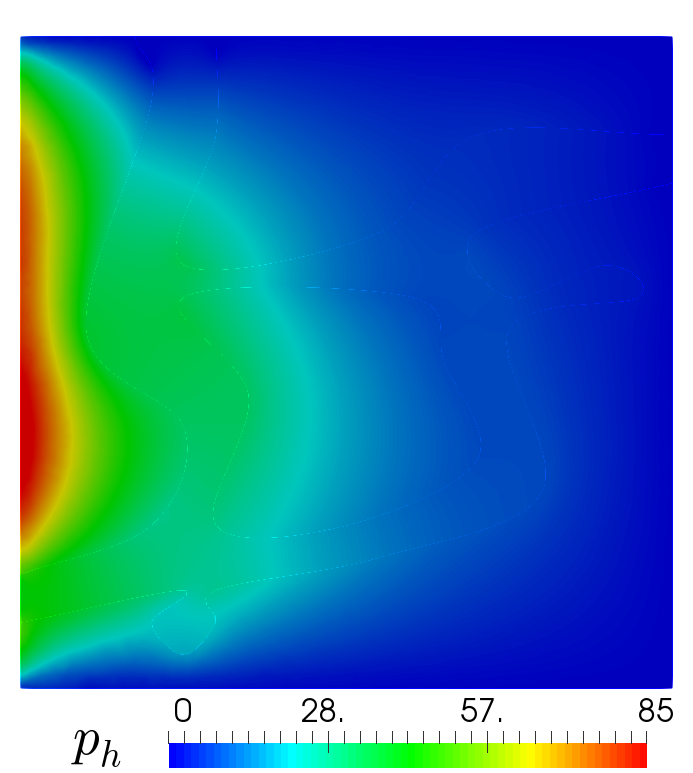}
\includegraphics[width=3.3cm]{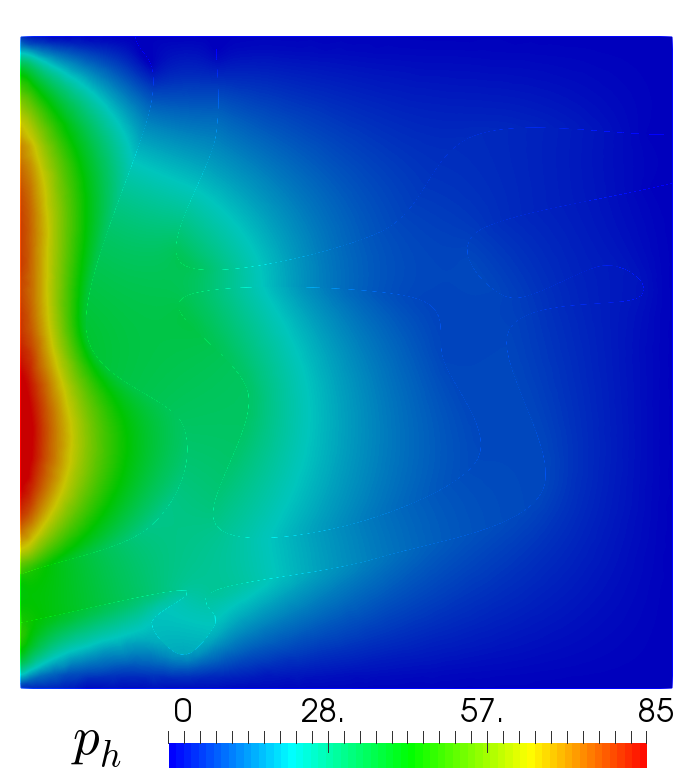}
		
\caption{[Example 3] Computed magnitude of the velocity, vorticity and pressure field at time $T=1$, with $\rho=3$, channel setting $\tF=10$ and $\tD=1$, and porous media setting $\tF=1$ and $\tD=1000$, for $\kappa\in \{ 3, 2, 1, 0.1, 0.01 \}$ (from left to right).}\label{fig:example3-uh-omh-ph}
\end{center}
\end{figure}

\section{Conclusions}\label{sec:conclusions}
In this paper we presented a new velocity-vorticity-pressure formulation for the Kelvin--Voigt--Brink-man--Forchheimer equations and its mixed finite element approximation. The system models fast unsteady viscoelastic flows in highly porous media. The formulation has several advantages, including an accurate and smooth approximation of the vorticity, well posedness for large data, and optimal convergence rates without a mesh quasi-uniformity assumption. Well-posedness of the weak formulation, as well as stability and error analysis for the semidiscrete and fully discrete mixed finite element approximations are presented. The numerical results illustrate that the method is robust for a wide range of parameters, the ability of the system to model heterogeneous media exhibiting both Stokes and Darcy flow regimes, as well as the dissipative effect of the elasticity parameter.

\bibliographystyle{abbrv}
\bibliography{caucao-yotov-2}

\end{document}